\newtheorem{thm}{Theorem}
\newtheorem{cor}[thm]{Corollary}
\newtheorem{lemma}[thm]{Lemma}
\newtheorem{prop}[thm]{Proposition}
\newtheorem{conj}[thm]{Conjecture}
\theoremstyle{definition}
\newtheorem*{defn}{Definition}
\newcommand{\R}{\mathbb{R}}
\newcommand{\N}{\mathbb{N}}
\newcommand{\Z}{\mathbb{Z}}
\newcommand{\Q}{\mathbb{Q}}
\newcommand{\C}{\mathbb{C}}
\newcommand{\inprod}[2]{\left\langle #1, #2 \right\rangle}
\newcommand{\abs}[1]{\left\vert #1 \right\vert}
\newcommand{\norm}[1]{\left\Vert #1 \right\Vert}
\newcommand{\eps}{\varepsilon}
\newcommand{\dfn}[1]{\textbf{#1}}
\newcommand{\Set}[2]{\left\{#1 \mathrel{} \middle| \mathrel{} #2 \right\}}
\DeclareMathOperator{\vol}{vol}
\DeclareMathOperator{\supp}{supp}
\newcommand{\boxdim}{\dim_{\mathrm{Mink}}}
\newcommand{\mg}[1]{\left| #1 \right|}
\newcommand{\md}[1]{\mg{#1}_+}
\numberwithin{thm}{section}
\numberwithin{equation}{section}
\newcommand{\scat}[1]{\mathbf{#1}}
\newcommand{\mr}[1]{\mathrm{#1}}
\newtheorem{examples}[thm]{Examples}
\newcommand{\from}{\colon}
\newcommand{\fcat}[1]{\mb{#1}}
\newcommand{\mb}[1]{\mathbf{#1}}
\newcommand{\transp}{\mathrm{T}}
\newcommand{\cat}[1]{\mathscr{#1}}
\newcommand{\Unit}{\mathbbm{1}}
\newcommand{\Vect}{\fcat{Vect}}
\DeclareMathOperator{\ob}{ob}		
\newcommand{\Hom}{\mr{Hom}}
\newcommand{\set}{\fcat{Set}}
\newcommand{\iso}{\cong}
\newcommand{\FinSet}{\fcat{FinSet}}
\newcommand{\FDVect}{\fcat{FDVect}}
\newcommand{\dee}{\,d}
\newcommand{\cube}[1]{\mathcal{C}^{#1}} 
\newcommand{\ball}[1]{\mathcal{B}^{#1}}
\author{Tom Leinster}
\address{School of Mathematics,
University of Edinburgh,
James Clerk Maxwell Building,
Peter Guthrie Tait Road,
Edinburgh EH9 3FD,
United Kingdom
}
\email{Tom.Leinster@ed.ac.uk}
\thanks{Tom Leinster was partially supported by an EPSRC Advanced Research Fellowship.}
\author{Mark W.\ Meckes}
\address{Department of Mathematics, Applied Mathematics, and
  Statistics, Case Western Reserve University, 10900 Euclid Ave.,
  Cleveland, Ohio 44106, U.S.A.}
\email{mark.meckes@case.edu}
\thanks{Mark Meckes was partially supported by a grant from the Simons Foundation (\#315593).}
\title[Magnitude: from categories to geometric measure theory]{The
  magnitude of a metric space: from category theory to geometric
  measure theory}
\keywords{magnitude,
metric space,
negative type,
Euler characteristic,
maximum diversity,
Minkowski dimension,
intrinsic volume}
\subjclass[2010]{Primary: 51F99;
Secondary:
49Q15,
28A75,
52A38,
31B15}
\begin{document}

\maketitle

\begin{abstract}
  Magnitude is a numerical isometric invariant of metric spaces, whose
  definition arises from a precise analogy between categories and
  metric spaces. Despite this exotic provenance, magnitude turns out
  to encode many invariants from integral geometry and geometric
  measure theory, including volume, capacity, dimension, and intrinsic
  volumes. This paper gives an overview of the theory of magnitude,
  from its category-theoretic genesis to its connections with these
  geometric quantities. Some new results are proved, including a
  geometric formula for the magnitude of a convex body in $\ell_1^n$.
\end{abstract}

\tableofcontents


\section{Introduction}
\label{S:intro}

Magnitude is a numerical isometric invariant of metric spaces.  Its
definition arises by viewing a metric space as a kind of
\emph{enriched category} --- an abstract structure that appears more
algebraic than geometric in nature --- and adapting a construction
from the intersection of category theory and homotopy theory.  One
would hardly expect, from such a provenance, that magnitude would have
any strong relationship to geometry as usually conceived.
Surprisingly, however, magnitude turns out to encode many invariants
from integral geometry and geometric measure theory, including volume,
capacity, dimension, and intrinsic volumes.  This paper will give an
overview of the theory of magnitude, from its category-theoretic
genesis to its connections with these geometric quantities.

We begin with a brief overview of the history of magnitude so far. The
grandparent of magnitude is the Euler characteristic of a topological
space, which is a natural analogue of the cardinality of a finite set.
To each category there is associated a topological space called its
\emph{classifying space}.  In \cite{ECC}, a formula was found for the
Euler characteristic of the classifying space of a suitably nice
finite category; applying this formula to less nice categories (for
which the Euler characteristic of the classifying space need not
exist) yielded a new cardinality-like invariant of categories, again
called the \emph{Euler characteristic} of a finite category.

Categories are a special case of a more general family of structures,
\emph{enriched categories}, which encompass both categories with
additional structure (like linear categories) and, surprisingly,
metric spaces.  In \cite{LeWi,MMS}, the definition of Euler
characteristic of a category was generalized to enriched categories,
renamed \emph{magnitude}, then re-specialized to finite metric spaces.
The first paper to be written on magnitude \cite{LeWi} focused on the
asymptotic behavior of the magnitudes of finite approximations to
specific compact subsets of Euclidean space.  The results there hinted
strongly that magnitude is closely related to geometric quantities
including volume and fractal dimension; numerical computations in
\cite{Willerton-heuristic} gave further evidence of these
relationships.

In \cite{Willerton-homog}, a definition was proposed for the magnitude
of certain compact metric spaces, and connections were found between
magnitude and some intrinsic volumes of Riemannian manifolds.  Shortly
thereafter, the paper \cite{MMS} appeared which laid out for the first
time the general theory of the magnitude of finite metric spaces; and
\cite{MeckPDM} which put the asymptotic approach of \cite{LeWi} for
studying magnitude of compact spaces on firm footing, and showed that
it also coincides with the definition used in \cite{Willerton-homog}.

The paper \cite{MeckMDC} introduced yet another equivalent approach to
magnitude for compact spaces, which makes magnitude more accessible to
a wide variety of analytic techniques. Using a result from potential
theory, \cite{MeckMDC} showed in particular that magnitude can be used
to recover the Minkowski dimension of a compact set in Euclidean
space.  Following the approach of \cite{MeckMDC}, the paper
\cite{BaCa} applied Fourier analysis to show that magnitude also
recovers volume in Euclidean space, and applied PDE techniques to
compute precisely magnitudes of Euclidean balls.

This paper aims to serve as a guide to the path from the definition of
the Euler characteristic of a finite category, to the geometric
results of \cite{MeckMDC} and \cite{BaCa} on magnitude in Euclidean
space.  It also includes a number of new results, in particular a
significant partial result toward a conjecture from \cite{MMS}
relating magnitude in $\ell_1^n$ to a family of intrinsic volumes
adapted to the $\ell_1$ metric, as well as generalizations of several
regularity results for magnitude from Euclidean space to more general
normed spaces. In order to reach the results of geometric interest as
quickly as possible, we omit many results from the papers named above,
and depart significantly at some points from the historical development
of ideas. We give complete proofs only for the new results, and for a
few known results for which we take a more direct approach than in
previous papers.

Section \ref{S:finite} begins with the definition of the Euler
characteristic of a finite category, and leads up to the magnitude of
a finite metric space and its basic properties.  Section
\ref{S:compact} covers the definition of the magnitude of a compact
space, its basic properties, and the results on magnitude of
manifolds. Section \ref{S:norms} covers magnitude in (quasi)normed
spaces, particularly $\ell_1^n$ and Euclidean space, and contains the
new results of this paper.  Finally, in section \ref{S:open-problems},
we discuss a number of open problems about magnitude.

Before moving on, we need to mention two threads in the story of
magnitude which have been ignored above and will make only brief
appearances in this paper.  The first is the magnitude of a graph,
viewed as a metric space with the shortest-path distance between
vertices.  This subject has been developed in \cite{MG}, which in
particular investigated its relationship to classical, combinatorial
graph invariants, and \cite{HeWi}, which found that the magnitude of
graphs is the Euler characteristic associated to a graded homology
theory for graphs.  The second thread is the connection of magnitude
to quantifying biodiversity and maximum entropy problems. This is
actually related with the historically first appearance of the
magnitude of a metric space in the literature, in \cite{SoPo}, and was
developed in \cite{METAMB,LeMe}; section \ref{S:diversity} will take
half a step in the direction of these connections.

\section{Finite metric spaces}
\label{S:finite}

Here we explain the origins of the notion of magnitude.  There is a simple
combinatorial definition of the magnitude or Euler characteristic of a
finite category (section~\ref{S:EC}), which extends in a natural way to a
more general class of structures, the enriched categories
(section~\ref{S:enriched}).  As we show, this general invariant is closely
related to several existing invariants of size.  Specializing it in a
different direction gives the definition of the magnitude of a finite
metric space (sections~\ref{S:finite-magnitude} and~\ref{S:pd}).

In order to do any of this, we first need to define the magnitude of a
matrix.

\subsection{The magnitude of a matrix}
\label{S:matrices}

Recall that a \dfn{semiring} is a ``ring without negatives'', that is,
an abelian group (written additively) with an associative operation of
multiplication that distributes over addition. Let $k$ be a
commutative semiring (always assumed to have a multiplicative identity
$1$) and $A$ a finite set, and let $Z \in k^{A \times A}$ be a square
matrix over $k$ indexed by the elements of $A$.  A \dfn{weighting} on
$Z$ is a column vector $w \in k^A$ satisfying $Zw = e$, where $e$ is
the column vector of $1$s, and a \dfn{coweighting} on $Z$ is a row
vector $v \in k^A$ satisfying $vZ = e^\transp$. That is,
\[
\sum_{b \in A} Z(a,b) w_b = 1 \text{ for every } a \in A
\]
and 
\[
\sum_{a \in A} v_a Z(a,b) = 1 \text{ for every } b \in A.
\]  
If $w$ is a weighting and $v$ a coweighting on $Z$ then 
\[
\sum_{a \in A} w_a = e^\transp w = vZw = v e = \sum_{a \in A} v_a.
\]
When $Z$ admits both a weighting and a coweighting, we may therefore
define the \dfn{magnitude} $\mg{Z}$ of $Z$ to be the common quantity
$\sum_a w_a = \sum_a v_a$, for any weighting $w$ and coweighting $v$.

An important special case is when $Z$ is invertible.  Then $Z$ has a unique
weighting and a unique coweighting, and its magnitude is the sum of the
entries of $Z^{-1}$:
\begin{equation}
  \label{E:inverse-sum}
  \mg{Z} = \sum_{a, b \in A} Z^{-1}(a, b).
\end{equation}
An even more special case is that of positive definite matrices:

\begin{prop}
  \label{T:pd-sup}
  Let $Z \in \R^A$ be a positive definite matrix.  Then
  \[
  \mg{Z} = \sup_{0 \neq x \in \R^A} \frac{(\sum_a x_a)^2}{x^\transp Z x},
  \]
  and the supremum is attained exactly when $x$ is a scalar multiple of the
  unique weighting on $Z$.
\end{prop}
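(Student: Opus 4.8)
The plan is to use the fact that a positive definite matrix $Z$ endows $\R^A$ with an inner product $\inprod{x}{y}_Z := x^\transp Z y$, and to recognize the claimed formula as a Cauchy--Schwarz inequality in this inner product whose extremal vectors are exactly the scalar multiples of the weighting.

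First I would assemble the elementary facts about the weighting. Since $Z$ is positive definite it is symmetric and invertible, so it possesses a unique weighting $w = Z^{-1}e$, where $e$ denotes the column vector of $1$s, and by definition $\mg{Z} = e^\transp w$. Because $w^\transp Z w = w^\transp(Zw) = w^\transp e = e^\transp w$, we have $\mg{Z} = w^\transp Z w = \inprod{w}{w}_Z$. Moreover $w \neq 0$, since $Zw = e \neq 0$, so positive definiteness gives $\mg{Z} = \inprod{w}{w}_Z > 0$; the same positive definiteness makes $x^\transp Z x > 0$ for every $x \neq 0$, so the supremum in the statement is taken over strictly positive denominators and is well defined.

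The crux is the identity $\sum_a x_a = e^\transp x = (Zw)^\transp x = w^\transp Z x = \inprod{w}{x}_Z$, valid for all $x \in \R^A$ using the symmetry $Z^\transp = Z$. The Cauchy--Schwarz inequality for $\inprod{\cdot}{\cdot}_Z$ then gives
\[
\left( \sum_a x_a \right)^2 = \inprod{w}{x}_Z^2 \le \inprod{w}{w}_Z \, \inprod{x}{x}_Z = \mg{Z} \cdot x^\transp Z x,
\]
so the quotient is bounded above by $\mg{Z}$ for every nonzero $x$. Substituting $x = w$ turns the quotient into $(\mg{Z})^2 / \mg{Z} = \mg{Z}$, establishing both that the supremum equals $\mg{Z}$ and that it is attained.

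The characterization of the maximizers then follows immediately from the equality case of Cauchy--Schwarz: equality $\inprod{w}{x}_Z^2 = \inprod{w}{w}_Z \inprod{x}{x}_Z$ holds precisely when $x$ and $w$ are linearly dependent, that is, since $w \neq 0$ and $x \neq 0$, exactly when $x$ is a nonzero scalar multiple of the weighting $w$. I do not expect a genuine obstacle here: the whole argument hinges on identifying the correct inner product, after which it reduces to Cauchy--Schwarz together with the single substitution $x = w$ and the standard equality criterion.
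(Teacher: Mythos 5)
Your proof is correct and follows exactly the route the paper indicates: it cites the Cauchy--Schwarz inequality (via \cite[Proposition~2.4.3]{MMS}) as the source of this result, and your argument --- introducing the inner product $\inprod{x}{y}_Z = x^\transp Z x$, identifying $\sum_a x_a = \inprod{w}{x}_Z$, and invoking the equality case --- is precisely that argument carried out in full.
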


This follows swiftly from the Cauchy--Schwarz inequality
\cite[Proposition~2.4.3]{MMS}.

\subsection{The Euler characteristic of a finite category}
\label{S:EC}

A category can be viewed as a directed graph (allowing multiple parallel
edges) together with an associative, unital operation of composition.  The
vertices of the graph are the objects of the category, and for each pair
$(a, b)$ of vertices, the edges from $a$ to $b$ in the graph are the maps
from $a$ to $b$ in the category, which form a set $\Hom(a, b)$.  Thus,
composition defines a function $\Hom(a, b) \times \Hom(b, c) \to \Hom(a,
c)$ for each $a, b, c$, and there is a loop $1_a \in \Hom(a, a)$ on each
vertex $a$.  Although in many categories of interest, the collections of
objects and maps form infinite sets or even proper classes, we will be
considering \dfn{finite categories}: those with only finitely many objects
and maps.

Let $\mathbf{A}$ be a finite category, with set of objects $\ob
\mathbf{A}$.  The \dfn{Euler characteristic} of $\mathbf{A}$ is the
magnitude of the matrix $Z_\mathbf{A} \in \Q^{\ob \mathbf{A} \times \ob
  \mathbf{A}}$ given by $Z_\mathbf{A}(a,b) = \# \Hom(a,b)$ (where $\#$
denotes cardinality), whenever this magnitude is defined.

For example, if $\mathbf{A}$ has no maps other than identities then
$Z_\mathbf{A}$ is the identity and the Euler characteristic of
$\mathbf{A}$ is simply the number of objects.  More generally, any
partially ordered set $(P, \leq)$ gives rise to a category
$\mathbf{A}$ whose objects are the elements of $P$, and with one map
$a \to b$ when $a \leq b$ and none otherwise.  In a theory made famous
by Rota \cite{RotaFCT}, every finite partially ordered set $P$ has
associated with it a M\"obius function $\mu$, which is defined on
pairs $(a, b)$ of elements of $P$ such that $a \leq b$, and takes
values in $\Z$.  It generalizes the classical M\"obius function, and
the construction above for categories generalizes it further still:
$\mu(a, b) = Z_\mathbf{A}^{-1}(a, b)$ whenever $a \leq b$, and the
definition of Euler characteristic of a category extends the existing
definition for ordered sets \cite[Proposition~4.5]{ECC}.

To any small category $\mathbf{A}$ there is assigned a topological
space, called its \dfn{classifying space}.  The name ``Euler
characteristic'' is largely justified by the following result.

\begin{thm}[{\cite[Proposition 2.11]{ECC}}]
  \label{T:EC=EC}
  Let $\mathbf{A}$ be a finite category.  Under appropriate conditions
  (which imply, in particular, that the Euler characteristic of the
  classifying space of $\mathbf{A}$ is defined), the Euler
  characteristic of the category $\mathbf{A}$ is equal to the Euler
  characteristic of its classifying space.
\end{thm}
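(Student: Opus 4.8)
The plan is to compute both Euler characteristics from the \emph{nerve} of $\mathbf{A}$ and match them through a geometric series. Recall that the classifying space $B\mathbf{A}$ is the geometric realization of the nerve $N\mathbf{A}$, the simplicial set whose $n$-simplices are the strings $a_0 \to a_1 \to \cdots \to a_n$ of $n$ composable morphisms and whose degeneracy maps insert identity morphisms. When $N\mathbf{A}$ has only finitely many nondegenerate simplices, $B\mathbf{A}$ is a finite CW complex with exactly one $n$-cell for each nondegenerate $n$-simplex, so its Euler characteristic is the alternating cell count $\sum_{n\ge 0}(-1)^n s_n$, where $s_n$ is the number of nondegenerate $n$-simplices. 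First I would observe that a nondegenerate $n$-simplex is exactly a string of $n$ composable \emph{non-identity} morphisms; since the number of non-identity morphisms $a \to b$ is $(Z_\mathbf{A}-I)(a,b)$ (subtracting the identity only when $a=b$), summing over all choices of objects gives
\[
s_n = \sum_{a_0,\dots,a_n} \prod_{i=1}^n (Z_\mathbf{A}-I)(a_{i-1},a_i) = e^\transp (Z_\mathbf{A}-I)^n e.
\]

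Writing $N = Z_\mathbf{A} - I$, the appropriate hypotheses should be precisely those guaranteeing that $N\mathbf{A}$ is a finite simplicial set, i.e.\ that only finitely many $s_n$ are nonzero. Because the entries of $N^n$ are nonnegative integers (they count strings) and $s_n = e^\transp N^n e$ is their total sum, this is equivalent to $N$ being nilpotent. Granting this, $Z_\mathbf{A} = I + N$ is invertible with $Z_\mathbf{A}^{-1} = \sum_{n\ge 0}(-N)^n$, a \emph{finite} sum. In particular $Z_\mathbf{A}$ admits a unique weighting and coweighting, so the Euler characteristic of the category $\mathbf{A}$ is defined and, by \bref{E:inverse-sum}, equals $\sum_{a,b} Z_\mathbf{A}^{-1}(a,b) = e^\transp Z_\mathbf{A}^{-1} e$. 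I would then assemble the two computations:
\[
\chi(B\mathbf{A}) = \sum_{n\ge 0}(-1)^n s_n = e^\transp \Bigl(\sum_{n\ge 0}(-N)^n\Bigr) e = e^\transp Z_\mathbf{A}^{-1} e = \mg{Z_\mathbf{A}},
\]
which is the asserted equality.

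The main obstacle is not the algebra above but the bookkeeping needed to make both invariants well defined and to pin down the ``appropriate conditions.'' On the topological side I must invoke the standard fact that the Euler characteristic of a finite CW complex (equivalently, the alternating sum of its Betti numbers) agrees with its alternating cell count; this is what licenses replacing $\chi(B\mathbf{A})$ by $\sum_n (-1)^n s_n$, and it requires genuine finiteness of the nerve rather than mere finiteness of $\mathbf{A}$. The crux is therefore to identify a condition on $\mathbf{A}$ — no non-identity endomorphisms and no arbitrarily long strings of composable non-identity morphisms, so that $N$ is nilpotent — that simultaneously forces $N\mathbf{A}$ to have finitely many nondegenerate simplices and $Z_\mathbf{A}$ to be invertible. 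Once nilpotency is secured the geometric-series identity terminates and the matching is immediate; the delicate part is verifying that these hypotheses really do hold under the stated conditions, and that degenerate simplices, which correspond to inserted identities, contribute nothing beyond the cell-counting formula.
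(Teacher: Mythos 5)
Your argument is correct and is essentially the proof of Proposition 2.11 in the cited reference \cite{ECC}: the survey itself gives no proof, but the original one proceeds exactly as you do, identifying nondegenerate $n$-simplices of the nerve with strings of $n$ composable non-identity morphisms, noting that finiteness of the nerve forces $Z_\mathbf{A} - I$ to be nilpotent, and summing the resulting finite geometric series for $Z_\mathbf{A}^{-1}$. Your identification of the ``appropriate conditions'' (finitely many nondegenerate simplices, equivalently nilpotency of $Z_\mathbf{A}-I$) also matches the hypothesis of the cited result.
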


Euler characteristic for finite categories enjoys many properties
analogous to those enjoyed by topological Euler characteristic
\cite[Section~2]{ECC}.  For instance, categorical Euler characteristic is
invariant under equivalence (mirroring homotopy invariance in the
topological setting), and is additive with respect to disjoint union of
categories and multiplicative with respect to products.  There is even an
analogue of the topological formula for the Euler characteristic of the
total space of a fibration.

Schanuel \cite{SchaNSE} argued that Euler characteristic for
topological spaces is closely analogous to cardinality for sets.  For
instance, it has analogous additivity and multiplicativity properties,
it satisfies the inclusion-exclusion principle (under hypotheses),
and, indeed, it reduces to cardinality for finite discrete spaces.
Similarly, the results described above suggest that Euler
characteristic for finite categories is the categorical analogue of
cardinality.

\subsection{Enriched categories}
\label{S:enriched}

A \dfn{monoidal category} is a category $\cat{V}$ equipped with an
associative binary operation $\otimes$ (which is formally a functor $\cat{V}
\times \cat{V} \to \cat{V}$) and a unit object $\Unit \in \cat{V}$.  The
associativity and unit axioms are only required to hold up to suitably
coherent isomorphism; see~\cite{MacLCWM} for details.

Typical examples of monoidal categories $(\cat{V}, \otimes, \Unit)$ are the
categories $(\set, \times, \{\star\})$ of sets with cartesian product and
$(\FDVect_K, \otimes, K)$ of finite-dimensional vector spaces over a field
$K$.  A less obvious example is the ordered set $([0, \infty], \geq)$.  As
a category, its objects are the nonnegative reals together with $\infty$,
there is one map $x \to y$ when $x \geq y$, and there are none otherwise.
It is monoidal with $\otimes = +$ and $\Unit = 0$.

Let $\cat{V} = (\cat{V}, \otimes, \Unit)$ be a monoidal category.  The
definition of category enriched in $\cat{V}$, or $\cat{V}$-category, is
obtained from the definition of ordinary category by requiring that the
hom-sets are no longer sets but objects of $\cat{V}$.  Thus, a (small)
\dfn{$\cat{V}$-category} $\scat{A}$ consists of a set $\ob\scat{A}$ of
objects, an object $\Hom(a, b)$ of $\cat{V}$ for each $a, b \in
\ob\scat{A}$, and operations of composition and identity satisfying
appropriate axioms~\cite{KellBCE}.  The composition consists of a map 
\[
\Hom(a, b) \otimes \Hom(b, c) \to \Hom(a, c)
\]
in $\cat{V}$ for each $a, b, c \in \ob\scat{A}$, while the identities are
provided by a map $\Unit \to \Hom(a, a)$ for each $a \in \ob\scat{A}$.  

\begin{examples}
\begin{enumerate}
\item 
When $\cat{V} = \set$ (with monoidal structure as above), a
$\cat{V}$-category is an ordinary (small) category.

\item
When $\cat{V} = \Vect_K$, a $\cat{V}$-category is a \dfn{linear
  category}, that is, a category in which each hom-set carries the
structure of a vector space, and composition is bilinear.

\item
When $\cat{V} = [0, \infty]$, a $\cat{V}$-category is a \dfn{generalized
  metric space} \cite{LawvMSG,LawvTCS}.  That is, a $\cat{V}$-category
consists of a set $A$ of objects or points together with, for each $a, b
\in A$, a real number $\Hom(a, b) = d(a, b) \in [0, \infty]$, satisfying
the axioms
\[
d(a, b) + d(b, c) \geq d(a, c),
\qquad
d(a, a) = 0
\]
($a, b, c \in A$).  Such spaces are more general than classical metric
spaces in three ways: $\infty$ is permitted as a distance, the
separation axiom $d(a, b) = 0 \implies a = b$ is dropped, and, most
significantly, $d$ is not required to be symmetric.

\item
The category $\cat{V} = ([0,\infty],\geq)$ can alternatively be given
the monoidal structure $(\max, 0)$. A $\cat{V}$-category is then a
generalized ultrametric space, that is, a generalized metric space
satisfying the stronger triangle inequality $\max\{d(a,b), d(b,c)\}
\ge d(a,c)$.
\end{enumerate}
\end{examples}

To define the magnitude of an enriched category, we start with a monoidal
category $(\cat{V}, \otimes, \Unit)$ together with a commutative semiring
$k$ and a map $\mg{\,\cdot\,}\from \ob\cat{V} \to k$, with the property
that $\mg{X} = \mg{Y}$ whenever $X \iso Y$, and satisfying the
multiplicativity axioms $\mg{X \otimes Y} = \mg{X} \cdot \mg{Y}$ and
$\mg{\Unit} = 1$.

\begin{defn}
Let $\scat{A}$ be a $\cat{V}$-category with only finitely many objects.
\begin{enumerate}
\item
The \dfn{similarity matrix} of $\scat{A}$ is the $\ob\scat{A} \times
\ob\scat{A}$ matrix $Z_\scat{A}$ over $k$ defined by $Z_\scat{A}(a, b) =
\mg{\Hom(a, b)}$.

\item
A \dfn{(co)weighting} on $\scat{A}$ is a (co)weighting on $Z_\scat{A}$,
and $\scat{A}$ \dfn{has magnitude} if $Z_\scat{A}$ does.  Its
\dfn{magnitude} is then $\mg{\scat{A}} = \mg{Z_\scat{A}}$.
\end{enumerate}
\end{defn}

\begin{examples}
\begin{enumerate}
\item 
Let $\cat{V}$ be the monoidal category $(\FinSet, \times, \{\star\})$ of
finite sets.  Let $k = \Q$, and for $X \in \FinSet$, let $\mg{X} \in \Q$ be
the cardinality of $X$.  Then we obtain a notion of magnitude for finite
categories; it is exactly the Euler characteristic of
section~\ref{S:EC}.

\item Let $\cat{V}$ be the monoidal category $\FDVect_K$ of
  finite-dimensional vector spaces over a field $K$.  Let $k = \Q$, and for
  $X \in \FDVect_K$, put $\mg{X} = \dim X \in \Q$.  Then we obtain a notion
  of magnitude for linear categories with finitely many objects and
  finite-dimensional hom-spaces. As shown in \cite{MFDA}, this invariant is
  closely related to the Euler form of an associative algebra, defined
  homologically.

\item Let $\cat{V} = [0, \infty]$, with monoidal structure $(+,
  0)$. Let $k = \R$, and for $x \in [0, \infty]$, put $\mg{x} =
  e^{-x}$.  (We have little choice about this: the multiplicativity
  axioms force $\mg{x} = C^x$ for some constant $C$, at least assuming
  that $\mg{\cdot}$ is to be measurable. We will address the one
  degree of freedom here through the introduction of magnitude
  functions in the next section.)  Then we obtain a notion of the
  magnitude $\mg{A} \in \R$ of a finite metric space $\mg{A}$, examined in
  detail later.

\item Let $\cat{V}=[0, \infty]$, now with monoidal structure $(\max,
  0)$. Let $k = \R$, and define $\mg{\cdot}: [0, \infty] \to \R$ to be
  either the indicator function of $[0, 1]$ or that of $[0, 1)$.  It
  is shown in Section 8 of \cite{MeckMDC} that these are essentially
  the only possibilities for $\mg{\cdot}$, and that the resulting
  magnitude of a finite ultrametric space is simply the number of
  balls of radius $1$ (closed or open, respectively) needed to cover
  it.  It is also shown that this leads naturally to the notion of
  $\eps$-entropy or $\eps$-capacity.

\end{enumerate}
\end{examples}

The multiplicativity condition $\mg{X \otimes Y} = \mg{X} \cdot
\mg{Y}$ on objects of $\cat{V}$ has so far not been used.  However, it
implies a similar multiplicativity condition on categories enriched in
$\cat{V}$.  In the case of metric spaces, this reduces to Proposition
\ref{T:finite-ell1-product} below; for the general statement, see
\cite[Proposition 1.4.3]{MMS}.

\subsection{The magnitude of a finite metric space}
\label{S:finite-magnitude}

Concretely, the magnitude $\mg{A}$ of a finite metric space $(A, d)$
is the magnitude of the matrix $Z = Z_A \in \R^{A \times A}$ given by
$Z_A(a,b) = e^{-d(a,b)}$, if that is defined.  Taking advantage of the
symmetry of $Z_A$ to simplify slightly, this means the following.  A vector
$w \in \R^A$ is a \dfn{weighting} for $A$ if $Z_A w = e$, where $e \in
\R^A$ is the column vector of $1$s, and if a weighting for $A$ exists, then
the \dfn{magnitude} of $A$ is
\[
\mg{A} = \sum_{a \in A} w_a.
\]

This is not a classical invariant or one that appears to have previously
been explored mathematically prior to the work cited in the introduction.
Neither is it wholly new.  In a probabilistic analysis of the benefits of
highly diverse ecosystems, Solow and Polasky \cite{SoPo} derived a
lower bound on the benefit and identified one term, which they called the
``effective number of species'', as especially interesting.  Although it
was not thoroughly investigated in \cite{SoPo}, this term is exactly our
magnitude.  The reader is referred to \cite{MMS,LeCo,METAMB,LeMe} for more
information about this connection.

Not every finite metric space possesses a weighting or, therefore, has
well-defined magnitude. One large and important class of spaces which
always does is the subject of section \ref{S:pd}.  The next two results
give additional examples.

From now on, to simplify the statements of results, all metric spaces and
all compact sets in a metric space are assumed to be nonempty.

\begin{prop}[{\cite[Theorem 2]{LeWi} and \cite[Proposition
    2.1.3]{MMS}}] 
  \label{T:scattered}
  Let $(A,d)$ be a finite metric space, and suppose that whenever $a,
  b \in A$ with $a \neq b$, we have $d(a,b) > \log(\# A - 1)$. Then
  $A$ possesses a positive weighting, and $\mg{A}$ is therefore
  defined.
\end{prop}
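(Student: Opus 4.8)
The plan is to use the hypothesis $d(a,b) > \log(\#A - 1)$ to make the similarity matrix strictly diagonally dominant, deduce invertibility through a Neumann series, and then read off positivity of the resulting weighting from the sign pattern of that series. Write $n = \#A$ and split $Z_A = I + E$, where $E$ is the off-diagonal part, so that $E(a,a) = 0$ and $E(a,b) = e^{-d(a,b)}$ for $a \neq b$. The hypothesis gives $0 < E(a,b) < \tfrac{1}{n-1}$, so every row sum $r_a = \sum_{b \neq a} E(a,b)$ satisfies $r_a < 1$; put $\rho = \max_a r_a < 1$.

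First I would settle existence. Since each row sum of $E$ is at most $\rho < 1$, the matrix $Z_A = I + E$ is strictly diagonally dominant, hence invertible, and its inverse is the convergent Neumann series $Z_A^{-1} = \sum_{k \geq 0} (-E)^k$. Thus $Z_A$ has a unique weighting $w = Z_A^{-1} e = \sum_{k \geq 0} (-1)^k E^k e$, and in particular $\mg{A}$ is defined. This step is routine.

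The real content is showing that every coordinate $w_a$ is strictly positive, and this is the step I expect to be the main obstacle, since invertibility by itself carries no information about signs. I would argue coordinatewise, exploiting the nonnegativity of $E$. Because $E$ has nonnegative entries, so does each $E^k e$; and since $Ee = r \leq \rho e$, repeatedly applying the monotone map $x \mapsto Ex$ gives $0 \leq E^k e \leq \rho^k e$, whence $E^k e \to 0$. I would also prove by induction that $E^{k+1} e \leq E^k e$ coordinatewise for all $k \geq 0$: the base case $Ee = r \leq e$ holds because $r_a < 1$, and applying the nonnegative matrix $E$ to the inductive hypothesis $E^k e \leq E^{k-1} e$ preserves the inequality. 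Fixing $a$, the series $w_a = \sum_{k \geq 0} (-1)^k (E^k e)_a$ is therefore alternating, with nonnegative terms decreasing monotonically to $0$. The elementary alternating-series estimate then gives $w_a \geq (E^0 e)_a - (E^1 e)_a = 1 - r_a > 0$, so $w$ is a positive weighting and the proof is complete.
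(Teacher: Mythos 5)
Your proof is correct, and it is essentially the argument given in the sources cited for this proposition (\cite{LeWi}, Theorem 2, and \cite{MMS}, Proposition 2.1.3); the present paper gives no proof of its own, remarking only that the result is ``proved by showing that for large enough $t$, $Z_{tA}$ is positive definite'' as an application of Proposition \ref{T:finite-sup}. It is worth noting that the positive-definiteness route sketched in the paper only establishes that $\mg{A}$ is \emph{defined} (strict diagonal dominance of the symmetric matrix $Z_A$ gives positive definiteness, hence a magnitude); it says nothing about the sign of the weighting, which is the substantive claim and exactly the part your Neumann-series argument supplies. The one point where you diverge from the standard argument is in extracting positivity from $\sum_{k \geq 0}(-1)^k E^k e$: the usual device is to group consecutive terms in pairs, writing $w = \sum_{k \geq 0} E^{2k}(I-E)e$ and observing that $(I-E)e = e - r$ has strictly positive entries while $E^{2k}$ is entrywise nonnegative, so every term is nonnegative and the $k=0$ term already gives $w_a \geq 1 - r_a > 0$. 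Your route --- proving $E^{k+1}e \leq E^k e$ coordinatewise by induction and invoking the alternating series estimate --- reaches the same bound $w_a \geq 1 - r_a$ at the cost of a small extra lemma; both are valid, and the pairing trick is really just a repackaging of your monotonicity observation. (Pedantically, the bound $E(a,b) < \tfrac{1}{\#A - 1}$ presupposes $\#A \geq 2$, but the one-point case is trivial since then $E = 0$.)
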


A metric space $(A,d)$ is called \dfn{homogeneous} if its isometry
group acts transitively on the points of $A$.

\begin{prop}[{\cite{SpeyMS}; see also \cite[Proposition 2.1.5]{MMS}}]
  \label{T:Speyer}
  If $(A,d)$ is a finite homogeneous metric space and $a_0 \in A$ is
  any fixed point, then $A$ possesses a positive weighting and
  \[
  \mg{A} = \frac{(\# A)^2}{\sum_{a,b \in A} e^{-d(a,b)}} = \frac{\#
    A}{\sum_{a \in A} e^{-d(a,a_0)}}.
  \]
\end{prop}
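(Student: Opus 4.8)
The plan is to exploit the isometry-group invariance to guess a constant weighting, and then read off both formulas. Write $Z = Z_A$, so that $Z(a,b) = e^{-d(a,b)}$, and for each $a \in A$ let $S_a = \sum_{b \in A} Z(a,b)$ denote the $a$-th row sum. The first and crucial step is to show that $S_a$ does not depend on $a$. Given points $a, a' \in A$, homogeneity supplies an isometry $\phi$ of $A$ with $\phi(a) = a'$; since $\phi$ is a bijection of the point set and preserves distances, reindexing the sum by $b = \phi(b')$ gives
\[
S_{a'} = \sum_{b} e^{-d(\phi(a), b)} = \sum_{b'} e^{-d(\phi(a), \phi(b'))} = \sum_{b'} e^{-d(a, b')} = S_a.
\]
I would denote this common value by $S$, noting that it is strictly positive since the diagonal term $Z(a,a) = 1$ already contributes.

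Next I would verify that the constant vector $w$ with $w_a = 1/S$ for every $a$ is a weighting. For each $a$ we have $(Zw)_a = \sum_b Z(a,b)(1/S) = S_a/S = 1$, so $Zw = e$; and $w$ is positive because $S > 0$. This simultaneously establishes that $A$ has a positive weighting and, by the definition of the magnitude of a finite metric space, that $\mg{A} = \sum_a w_a = (\# A)/S$.

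Finally, to recover the two displayed expressions I would re-express $S$ in the two desired forms. For the second formula, observe directly that $S = S_{a_0} = \sum_{a} e^{-d(a_0, a)} = \sum_a e^{-d(a,a_0)}$ for the fixed point $a_0$ (using symmetry of $d$), so that $\mg{A} = (\# A)/S$ is exactly the claimed $(\# A)/\sum_a e^{-d(a,a_0)}$. For the first formula, summing the identity $S_a = S$ over all $a$ yields $\sum_{a,b} e^{-d(a,b)} = (\# A)\, S$, whence $\mg{A} = (\# A)/S = (\# A)^2 / \sum_{a,b} e^{-d(a,b)}$.

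I do not expect a genuine obstacle here: the entire content is concentrated in the row-sum invariance, and there the substance is just the reindexing $b = \phi(b')$ combined with the isometry property $d(\phi(a), \phi(b')) = d(a, b')$. Everything downstream is a direct computation.
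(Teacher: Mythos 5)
Your proof is correct, and it is the standard argument for this result: the paper itself gives no proof (it cites Speyer and \cite[Proposition 2.1.5]{MMS}), but the argument in those references is exactly yours — homogeneity forces the row sums of $Z_A$ to be constant, so the constant vector $1/S$ is a positive weighting and both formulas follow by rewriting $S$. No gaps.
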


For metric spaces $(A,d_A)$ and $(B,d_B)$, we denote by $A \times_1 B$
the set $A \times B$ equipped with the metric
\[
d\bigl((a, b), (a', b')\bigr) = d_A(a,a') + d_B(b,b').
\]

\begin{prop}[{\cite[Proposition 2.3.6]{MMS}}]
  \label{T:finite-ell1-product} 
  Suppose that $(A,d_A)$ and $(B,d_B)$ are finite metric spaces with
  weightings $w \in \R^A$ and $v \in \R^B$ respectively. Then $x \in
  \R^{A\times B}$ given by $x_{(a,b)} = w_a v_b$ is a weighting for $A
  \times_1 B$, and $\mg{A \times_1 B} = \mg{A} \mg{B}$.
\end{prop}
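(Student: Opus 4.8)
The plan is to exploit the single structural fact that makes the $\ell_1$ metric interact so well with magnitude: the exponential converts the \emph{sum} defining the product metric into a \emph{product}. Concretely, I would first record that the similarity matrix of $A \times_1 B$ factors as a Kronecker product $Z_A \otimes Z_B$ of the similarity matrices of the factors. Indeed, for $(a,b),(a',b') \in A \times B$,
\[
Z_{A\times_1 B}\bigl((a,b),(a',b')\bigr) = e^{-d_A(a,a') - d_B(b,b')} = e^{-d_A(a,a')} e^{-d_B(b,b')} = Z_A(a,a')\, Z_B(b,b').
\]
This is nothing but the multiplicativity axiom $\mg{x \otimes y} = \mg{x}\,\mg{y}$ for the monoidal category $([0,\infty],+,0)$ with $\mg{x} = e^{-x}$, specialized to the present situation; so the statement is really the metric-space shadow of the general multiplicativity of magnitude under $\otimes$-products of enriched categories.

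The rest would then be a direct verification. Given the weighting equations $Z_A w = e$ and $Z_B v = e$, I would compute, for each fixed $(a,b)$,
\[
\bigl(Z_{A\times_1 B}\, x\bigr)_{(a,b)} = \sum_{a' \in A}\sum_{b' \in B} Z_A(a,a') Z_B(b,b')\, w_{a'} v_{b'} = \Bigl(\sum_{a'} Z_A(a,a')\, w_{a'}\Bigr)\Bigl(\sum_{b'} Z_B(b,b')\, v_{b'}\Bigr),
\]
where the factorization of the finite double sum is the only step requiring any care, and it is entirely routine. The two factors are $(Z_A w)_a = 1$ and $(Z_B v)_b = 1$, so $Z_{A\times_1 B}\, x = e$ and $x$ is a weighting on $A \times_1 B$. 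Summing the entries of $x$ and factoring once more gives
\[
\mg{A \times_1 B} = \sum_{(a,b)} w_a v_b = \Bigl(\sum_{a} w_a\Bigr)\Bigl(\sum_b v_b\Bigr) = \mg{A}\,\mg{B}.
\]

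There is no genuine obstacle here: once the Kronecker-product factorization of $Z_{A\times_1 B}$ is observed, everything separates variable-by-variable, and the finiteness of $A$ and $B$ makes all rearrangements of sums legitimate. If anything, the point worth emphasizing is conceptual rather than computational --- that this multiplicativity is forced by, and is an instance of, the multiplicativity axiom on $\mg{\,\cdot\,}$ built into the definition of magnitude for enriched categories.
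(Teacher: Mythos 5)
Your proof is correct and is essentially the standard argument (the paper defers to \cite[Proposition 2.3.6]{MMS}, but the factorization $Z_{A\times_1 B} = Z_A \otimes Z_B$ and the entrywise separation of sums is exactly how that proof goes, and your conceptual remark matches the paper's own observation that this proposition is the metric-space instance of the general multiplicativity of magnitude for enriched categories). Nothing is missing; the symmetry of the similarity matrices means a weighting suffices to define magnitude here, so no separate coweighting argument is needed.
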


Proposition \ref{T:finite-ell1-product} has a generalization, Theorem
2.3.11 of \cite{MMS}, which is an analogue for magnitude of the
formula for the Euler characteristic of the total space of a
fibration.

As noted earlier, there is an arbitrary choice of scale implicit in
the definition of magnitude: we could choose any other base for the
exponent in place of $e^{-1}$. To deal with this, we will often work
with the whole family of metric spaces $\{tA\}_{t > 0}$, where $tA$
denotes the metric space $(A,td)$.  We will sometimes also let $0A$
denote a one-point space.  The (partially defined) function $t \mapsto
\mg{tA}$ is called the \dfn{magnitude function} of $A$.

\begin{prop}[{\cite[Proposition 2.2.6]{MMS}}]
  \label{T:large-t}
  Let $(A,d)$ be a finite metric space.
  \begin{enumerate}
  \item \label{P:large-t-defined}
    $\mg{tA}$ is defined for all but finitely many $t > 0$.
    
  \item \label{P:large-t-inc} 
    For sufficiently large $t$, $\mg{tA}$ is an increasing function of $t$.

  \item \label{P:large-t-limit} 
    $\lim_{t \to \infty} \mg{tA} = \#A$.
  \end{enumerate}
\end{prop}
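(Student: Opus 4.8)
The plan is to work throughout with the similarity matrix $Z(t) := Z_{tA}$, whose entries are $Z(t)(a,b) = e^{-t\,d(a,b)}$, and to build everything on the single observation that, since $d(a,a)=0$ while $d(a,b)>0$ for $a\neq b$, the matrix $Z(t)$ converges entrywise to the identity matrix $I$ as $t\to\infty$. All three parts will flow from this.

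For (1), I would expand
\[
\det Z(t) = \sum_{\sigma} \operatorname{sgn}(\sigma)\, e^{-t\sum_{a} d(a,\sigma(a))},
\]
the sum running over permutations $\sigma$ of $A$. Because $d(a,\sigma(a))=0$ for every $a$ forces $\sigma=\mr{id}$ (separation), the identity permutation is the unique term with exponent $0$ and contributes $+1$; collecting equal exponents then writes $\det Z(t)$ as a real exponential sum $\sum_j c_j e^{-t\lambda_j}$ with finitely many distinct $\lambda_j\ge 0$ and constant term $1$. This is not identically zero, since it tends to $1$ as $t\to\infty$, and a nontrivial real combination of distinct exponentials has only finitely many real zeros (the functions $e^{-\lambda_j t}$ form a Chebyshev system, by an inductive application of Rolle's theorem; equivalently, $\det Z$ extends to a nonzero entire function whose real zeros are isolated and bounded away from $+\infty$). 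Hence $Z(t)$ is invertible for all but finitely many $t>0$, and at each such $t$ the magnitude is defined, with $\mg{tA}=\sum_{a,b}Z(t)^{-1}(a,b)$ by \eqref{E:inverse-sum}.

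For the remaining parts I restrict to the (all large) $t$ at which $Z(t)$ is invertible and write $w(t)=Z(t)^{-1}e$ for the unique weighting, so that $\mg{tA}=e^\transp w(t)=\sum_a w_a(t)$ depends analytically on $t$. Continuity of matrix inversion at $I$ gives $Z(t)^{-1}\to I$, hence $w(t)\to e$ entrywise; summing coordinates yields $\lim_{t\to\infty}\mg{tA}=\sum_a 1=\#A$, which is (3).

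The substantive part is (2), and the crux is to recognize the derivative of $\mg{tA}$ as a quadratic form in the weighting. Differentiating $\mg{tA}=e^\transp Z(t)^{-1}e$ via $\frac{d}{dt}Z(t)^{-1}=-Z(t)^{-1}Z'(t)Z(t)^{-1}$ and using the symmetry of $Z(t)^{-1}$, I expect to obtain
\[
\frac{d}{dt}\mg{tA} = -w(t)^\transp Z'(t)\,w(t) = \sum_{a\neq b} d(a,b)\,e^{-t\,d(a,b)}\,w_a(t)\,w_b(t),
\]
since $Z'(t)(a,b)=-d(a,b)e^{-t\,d(a,b)}$ vanishes on the diagonal. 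As $w(t)\to e$, every coordinate $w_a(t)$ is positive once $t$ is large, so — assuming $\#A\ge 2$, the case $\#A=1$ being trivial with $\mg{tA}\equiv 1$ — each summand is strictly positive and $\frac{d}{dt}\mg{tA}>0$, giving that $\mg{tA}$ is strictly increasing for large $t$. The main obstacle is precisely this derivative identity: once $\frac{d}{dt}\mg{tA}$ is seen to be the manifestly nonnegative form $w^\transp(-Z')w$, monotonicity reduces to positivity of the weights, which is the same limiting fact that drives (3).
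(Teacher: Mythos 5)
Your proof is correct, and since the paper only cites \cite[Proposition 2.2.6]{MMS} without reproducing an argument, the relevant comparison is with that source: your three steps (the permutation expansion of $\det Z_{tA}$ as an exponential sum with finitely many zeros, the limit $Z_{tA}^{-1}\to I$ giving $\mg{tA}\to\#A$, and the derivative identity $\frac{d}{dt}\mg{tA}=-w(t)^{\transp}Z'(t)w(t)$ combined with eventual positivity of the weights) are essentially the same argument used there. No gaps.
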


Proposition \ref{T:large-t} supports the interpretation of the
magnitude $\mg{tA}$ as the ``effective number of points'' in $A$, when
viewed as a scale determined by $t$.  (We recall Solow and Polasky's
interpretation of $\mg{A}$ as the ``effective number of species''.)
However, the hypotheses of the propositions above also highlight the
counterintuitive behaviors that magnitude may exhibit.  In particular,
there exists a metric space $A$ such that each of the following holds:
\begin{enumerate}
\item $\mg{tA}$ is undefined for some $t > 0$.
\item $\mg{tA}$ is decreasing for some $t > 0$.
\item $\mg{tA} < 0$ for some $t > 0$.
\item There exists a $B \subseteq A$ such that $\mg{tB} > \mg{tA}$ for
  some $t > 0$.
\end{enumerate}
We need not look that hard to find such an ill-behaved space: the
complete bipartite graph $K_{3,2}$, equipped with the shortest path
metric, has all these unpleasant properties; see Example 2.2.7 of
\cite{MMS}. In the next section we will consider a class of spaces
which avoids most of these pathologies.

We end this section by noting that the issue of scale can be dealt
with in a more elegant way if $A$ is the vertex set of a graph and $d$
is the shortest path metric, or more generally, whenever $d$ is
integer-valued. By \eqref{E:inverse-sum}, in this situation $\mg{tA}$
is a rational function of $q = e^{-t}$.  More directly, if one
restricts attention to such spaces, the semiring $k$ in the previous
section can be taken to be the ring $\Q(q)$ of rational functions in a
formal variable $q$.  Then the matrix $Z_A \in (\Q(q))^{A\times A}$ is
always invertible, so the magnitude $\mg{A}$ is always defined as an
element of $\Q(q)$; see section 2 of \cite{MG}.

\subsection{Positive definite metric spaces}
\label{S:pd}

As noted in section \ref{S:matrices}, a positive definite matrix $Z$
always has magnitude, given by Proposition \ref{T:pd-sup}.  We will
now explore the consequences of this observation for magnitude of
metric spaces.

A finite metric space $(A,d)$ is said to be \dfn{positive definite} if
the associated matrix $Z_A$ is positive definite, and is said to be
of \dfn{negative type} if $Z_{tA}$ is positive semidefinite
for every $t > 0$. It can be shown \cite[Theorem 3.3]{MeckPDM} that if
$(A,d)$ is of negative type, then in fact $Z_{tA}$ is positive
definite, and hence $tA$ is a positive definite space. A general
metric space is said to be positive definite or of negative type,
respectively, if every finite subspace is.


The strange turn of terminology here is due to the negative sign in
$e^{-d}$. Negative type has several other equivalent formulations, and
is an important property in the theory of metric embeddings (see,
e.g., \cite{DeLa,BeLi,WeWi}).  The fact that negative type appears
naturally when considering magnitude is a hint that magnitude does in
fact connect with more classical topics in geometry.

The following result is an immediate consequence of Proposition
\ref{T:pd-sup} and the definition of magnitude.

\begin{prop}[{\cite[Proposition 2.4.3]{MMS}}]
  \label{T:finite-sup}
  If $A$ is a finite positive definite metric space, then the
  magnitude $\mg{A}$ is defined, and
  \[
  \mg{A} = \max_{0 \neq x \in \R^A} \frac{\left(\sum_{a\in A}
      x_a\right)^2}{x^\transp Z_A x},
  \]
  and the supremum is attained exactly when $x$ is a scalar multiple of the
  unique weighting on $A$.
\end{prop}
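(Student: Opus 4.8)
The plan is to derive the statement directly from Proposition \ref{T:pd-sup}, since $\mg{A}$ is by definition $\mg{Z_A}$ and $Z_A$ is precisely a matrix of the type that proposition treats. First I would check that $\mg{A}$ is defined at all. By hypothesis $A$ is positive definite, which means exactly that the real symmetric matrix $Z_A$ is positive definite, and in particular invertible. As noted around \eqref{E:inverse-sum}, an invertible matrix admits a unique weighting $w$ (the solution of $Z_A w = e$, where $e$ is the all-ones vector) and a unique coweighting, so $\mg{A} = \sum_a w_a$ is well-defined; this settles the first assertion.

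The variational formula and the characterization of the maximizer are then obtained by specializing Proposition \ref{T:pd-sup} to $Z = Z_A$. Should one wish to see why that proposition holds rather than merely cite it, the whole argument is a single use of Cauchy--Schwarz in the inner product $\inprod{x}{y}_{Z_A} := x^\transp Z_A y$ determined by the positive definite matrix $Z_A$. Using $Z_A w = e$ one computes $\sum_a x_a = e^\transp x = w^\transp Z_A x = \inprod{w}{x}_{Z_A}$, whence
\[
\Bigl(\sum_a x_a\Bigr)^2 = \inprod{w}{x}_{Z_A}^2 \le \inprod{w}{w}_{Z_A}\,\inprod{x}{x}_{Z_A} = (w^\transp Z_A w)\,(x^\transp Z_A x).
\]
Since $w^\transp Z_A w = w^\transp e = \sum_a w_a = \mg{A}$, dividing by $x^\transp Z_A x > 0$ gives the ratio the upper bound $\mg{A}$, with equality (by the equality case of Cauchy--Schwarz) exactly when $x$ is a scalar multiple of $w$; taking $x = w$ shows the bound is achieved, so the supremum is in fact a maximum equal to $\mg{A}$.

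I expect essentially no obstacle here: the real content is packaged in Proposition \ref{T:pd-sup}, and the only thing particular to metric spaces is the translation ``positive definite metric space'' $\Rightarrow$ ``$Z_A$ positive definite'' together with the invertibility it provides. The one point worth a moment's care is the identity $\sum_a x_a = \inprod{w}{x}_{Z_A}$, which is exactly what converts the Cauchy--Schwarz equality condition into the stated characterization of the optimal $x$ as a multiple of the weighting; everything else is routine.
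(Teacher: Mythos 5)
Your proposal is correct and matches the paper's approach: the paper derives this result as an immediate consequence of Proposition \ref{T:pd-sup} together with the definition of magnitude, exactly as you do, and the Cauchy--Schwarz argument you spell out is precisely the one the paper alludes to for Proposition \ref{T:pd-sup} itself (citing \cite[Proposition~2.4.3]{MMS}). No gaps.
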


A first application of Proposition \ref{T:finite-sup} is Proposition
\ref{T:scattered}, which is proved by showing that for large enough
$t$, $Z_{tA}$ is positive definite.

\begin{cor}[Corollaries 2.4.4 and 2.4.5 of \cite{MMS}]
  \label{T:finite-monotone}
  If $A$ is a finite positive definite metric space and $\emptyset
  \neq B \subseteq A$, then $1 \le \mg{B} \le \mg{A}$.
\end{cor}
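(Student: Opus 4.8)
The plan is to deduce both inequalities from the variational formula of Proposition \ref{T:finite-sup}, which is the crucial tool: the analogous monotonicity statement fails for general finite metric spaces (witness $K_{3,2}$ above), and positive definiteness enters precisely through the availability of the supremum characterization. First I would observe that $Z_B$ is the principal submatrix of $Z_A$ indexed by the points of $B$, and that a principal submatrix of a positive definite matrix is again positive definite; hence $B$ is itself a positive definite metric space, so Proposition \ref{T:finite-sup} applies to both $A$ and $B$, and in particular $\mg{B}$ is defined.

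For the lower bound $\mg{B} \ge 1$, I would feed the all-ones vector $e \in \R^B$ into the variational formula for $B$, which gives
\[
\mg{B} \ge \frac{\left(\sum_{b \in B} 1\right)^2}{e^\transp Z_B e} = \frac{(\# B)^2}{\sum_{a,b \in B} e^{-d(a,b)}}.
\]
Since $d$ is a genuine metric, the diagonal entries of $Z_B$ equal $1$ and each off-diagonal entry $e^{-d(a,b)}$ lies in $(0,1)$, so every one of the $(\# B)^2$ summands is at most $1$ and $\sum_{a,b \in B} e^{-d(a,b)} \le (\# B)^2$; the lower bound follows.

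The monotonicity $\mg{B} \le \mg{A}$ is the substantive half, and the idea is to transport the optimizer from $B$ to $A$ by padding with zeros. Let $x \in \R^B$ be the weighting on $B$, so that (by Proposition \ref{T:finite-sup}) $x$ attains the maximum for $B$ and $\sum_{b \in B} x_b = \mg{B} \ge 1 > 0$, whence $x \neq 0$. Define $y \in \R^A$ by $y_a = x_a$ for $a \in B$ and $y_a = 0$ otherwise, so $y \neq 0$. Because the entries of $y$ outside $B$ vanish and $Z_A$ restricts to $Z_B$ on $B \times B$, one checks directly that
\[
\sum_{a \in A} y_a = \sum_{b \in B} x_b
\qquad\text{and}\qquad
y^\transp Z_A y = x^\transp Z_B x.
\]
Substituting $y$ as a test vector in the variational formula for $A$ then yields
\[
\mg{A} \ge \frac{\left(\sum_{a \in A} y_a\right)^2}{y^\transp Z_A y} = \frac{\left(\sum_{b \in B} x_b\right)^2}{x^\transp Z_B x} = \mg{B},
\]
as required. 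The only point requiring care is the bookkeeping that extension-by-zero preserves both the numerator and the denominator exactly; no genuine obstacle arises, since all of the analytic content has already been packaged into the supremum characterization of Proposition \ref{T:finite-sup}.
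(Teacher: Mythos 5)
Your proof is correct and is essentially the argument the paper intends (and that Corollaries 2.4.4--2.4.5 of \cite{MMS} carry out): the lower bound comes from testing the variational formula of Proposition~\ref{T:finite-sup} with the all-ones vector, using that every entry of $Z_B$ is at most $1$, and monotonicity comes from extending a $B$-optimizer by zero to a test vector on $A$. The bookkeeping you flag (extension by zero preserves both numerator and denominator, and principal submatrices of positive definite matrices are positive definite) is exactly the content needed, so there is nothing to add.
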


Proposition \ref{T:finite-sup} will also be one of our main tools in
the extension of magnitude to compact spaces in section
\ref{S:compact}.

Proposition \ref{T:finite-sup} and its consequences would be of little
interest without a large supply of interesting examples of positive
definite spaces.  Many are collected in the following result; we refer
to \cite[Theorem 3.6]{MeckPDM} for references and further examples.

\begin{thm}
  \label{T:pd-examples}
  The following metric spaces are of negative type, and thus magnitude
  is defined for all their finite subsets.
  \begin{enumerate}
  \item 
    $\ell_p^n$, the set $\R^n$ equipped with the metric derived from the
    $\ell_p$-norm, for $n \geq 1$ and $1 \leq p \leq 2$;
    
  \item
    Lebesgue space $L_p[0, 1]$, for $1 \leq p \leq 2$;
  \item
    round spheres (with the geodesic distance);
    
  \item
    real and complex hyperbolic space;
    
  \item
    ultrametric spaces;

  \item weighted trees.
  \end{enumerate}
\end{thm}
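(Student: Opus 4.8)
The plan is to handle all six families through a single classical reduction. Recall that a symmetric kernel $d$ on $X \times X$ with $d(x,x) = 0$ is \emph{conditionally negative definite} (CND) if $\sum_{i,j} c_i c_j\, d(x_i, x_j) \le 0$ for every finite family $x_1, \dots, x_n \in X$ and all reals $c_1, \dots, c_n$ with $\sum_i c_i = 0$. By Schoenberg's theorem, $e^{-td}$ is positive semidefinite for every $t > 0$ if and only if $d$ is CND; equivalently, there is a map $\phi \from X \to H$ into a Hilbert space with $d(x,y) = \norm{\phi(x) - \phi(y)}^2$. Thus $(X,d)$ is of negative type, in the sense of Section~\ref{S:pd}, precisely when $d$ is CND. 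The easy direction is immediate from $e^{-td} = 1 - td + O(t^2)$ together with $\sum_i c_i = 0$; the substantive direction is the one I would cite. Once negative type is established, the ``thus magnitude is defined'' in the statement is automatic: as noted in Section~\ref{S:pd}, negative type forces each finite subspace to be positive definite, so Proposition~\ref{T:finite-sup} applies.

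First I would assemble the standard CND toolkit. The class of CND kernels on a fixed set is closed under nonnegative linear combinations and pointwise limits, it is inherited by subspaces (so negative type is a hereditary property), and if $d$ is a nonnegative CND kernel and $f$ is a Bernstein function then $f \of d$ is again CND. The last fact follows from the L\'evy--Khinchine representation $f(s) = a + bs + \int_0^\infty (1 - e^{-su})\, d\mu(u)$ and the observation that each $1 - e^{-u d}$ is CND, since on a zero-sum vector the constant $1$ contributes nothing while $e^{-ud}$ is positive semidefinite. The single analytic input I need is that $\abs{t}^p = (t^2)^{p/2}$ is CND on $\R$ for $0 < p \le 2$, which is exactly the Bernstein property of $s \mapsto s^{p/2}$ applied to the squared Euclidean distance $t^2$ (itself trivially CND).

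With this in hand, cases (1) and (2) are routine. For $\ell_p^n$ with $1 \le p \le 2$, the kernel $\norm{x-y}_p^p = \sum_{i=1}^n \abs{x_i - y_i}^p$ is a sum of coordinatewise CND kernels, hence CND; applying the Bernstein function $s \mapsto s^{1/p}$ (where $1/p \le 1$ uses $p \ge 1$) shows $\norm{x-y}_p$ is CND. Case (2) is identical with $\norm{f-g}_p^p = \int_0^1 \abs{f(s) - g(s)}^p\, ds$, a pointwise limit of Riemann sums of CND kernels. Cases (5) and (6) also reduce to the toolkit: finite ultrametric and weighted-tree metrics decompose as nonnegative combinations of cut pseudometrics $\delta_S(x,y) = \abs{\mathbf{1}_{x \in S} - \mathbf{1}_{y \in S}}^2$ --- for ultrametrics via the laminar family of threshold balls, for trees via the edge cuts --- each $\delta_S$ being manifestly CND. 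Equivalently, both embed isometrically into some $\ell_1^N$, which is of negative type by case (1) with $p = 1$, and heredity finishes the job.

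The main obstacle is cases (3) and (4), where no such elementary algebra is available and genuine harmonic analysis enters. For a round sphere one must show that the geodesic (angular) distance is CND, which I would deduce from Schoenberg's classification of positive-definite functions of the geodesic distance on $S^n$ via Gegenbauer expansions. For real and complex hyperbolic space the required statement is that the Riemannian distance itself is CND; this is the rank-one case of the Faraut--Harzallah analysis of invariant Hilbertian distances on noncompact symmetric spaces (see also Gangolli). These two families are where I expect the real work to lie, and where I would lean on the cited literature rather than reprove the spherical-function estimates from scratch.
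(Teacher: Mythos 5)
Your outline is correct, and it reconstructs exactly the standard arguments that the paper itself defers to: Theorem~\ref{T:pd-examples} is stated without proof, with a pointer to \cite[Theorem 3.6]{MeckPDM} and the references collected there (Schoenberg for the equivalence of negative type with conditional negative definiteness and for the $\ell_p$/$L_p$ cases via Bernstein functions, cut-metric or $\ell_1$-embedding arguments for ultrametrics and weighted trees, and Gangolli and Faraut--Harzallah for spheres and hyperbolic spaces). Your reduction of everything to the CND calculus, with cases (3) and (4) correctly flagged as the ones requiring genuine harmonic-analytic input from the literature, matches that route; the only point worth making explicit is the one you already note, that passing from ``$Z_{tA}$ positive semidefinite'' to ``magnitude defined'' uses the strict positive definiteness guaranteed by \cite[Theorem 3.3]{MeckPDM}, as recalled in Section~\ref{S:pd}.
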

 
Furthermore, some natural operations on positive definite spaces yield new
positive definite spaces.

\begin{prop}[{\cite[Lemma 2.4.2]{MMS}}]
  \label{T:finite-pd-operations} ~
  \begin{enumerate}
  \item Every subspace of a positive definite metric space is positive
    definite.
  \item If $A$ and $B$ are positive definite metric spaces, then $A
    \times_1 B$ is positive definite.
  \end{enumerate}
\end{prop}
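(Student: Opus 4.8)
The plan is to dispatch the two parts separately, with part (1) being essentially immediate and part (2) resting on a single structural observation about the similarity matrix of a product.

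For part (1), I would argue directly from the definition. A general metric space is positive definite precisely when all of its finite subspaces are. So let $X$ be positive definite and $Y \subseteq X$ a subspace. Any finite subset of $Y$, with the restricted metric, is simultaneously a finite subspace of $X$, and hence positive definite by hypothesis. Therefore every finite subspace of $Y$ is positive definite, which is exactly what it means for $Y$ to be positive definite. No computation is required.

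For part (2), I would first reduce to the finite case and then identify the relevant matrix as a Kronecker product. For the reduction: it suffices to show that an arbitrary finite subspace $S \subseteq A \times_1 B$ is positive definite. Writing $\pi_A, \pi_B$ for the coordinate projections, one has $S \subseteq \pi_A(S) \times_1 \pi_B(S)$, where $\pi_A(S)$ and $\pi_B(S)$ are finite subspaces of the positive definite spaces $A$ and $B$, hence positive definite. Granting the finite case, $\pi_A(S) \times_1 \pi_B(S)$ is then positive definite, and so $S$ is positive definite by part (1). The finite case itself follows from the product metric formula
\[
d\bigl((a,b),(a',b')\bigr) = d_A(a,a') + d_B(b,b'),
\]
which gives
\[
Z_{A \times_1 B}\bigl((a,b),(a',b')\bigr) = e^{-d_A(a,a')}\,e^{-d_B(b,b')} = Z_A(a,a')\,Z_B(b,b').
\]
Thus $Z_{A \times_1 B}$ is the Kronecker product of $Z_A$ and $Z_B$, and I would conclude by invoking the standard linear-algebra fact that the Kronecker product of two positive definite matrices is positive definite, its eigenvalues being the products of the (positive) eigenvalues of the factors. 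Equivalently, writing $Z_A(a,a') = \inprod{u_a}{u_{a'}}$ and $Z_B(b,b') = \inprod{v_b}{v_{b'}}$ for linearly independent families $\{u_a\}$ and $\{v_b\}$ (possible precisely because $Z_A$ and $Z_B$ are positive definite), the matrix $Z_{A \times_1 B}$ is the Gram matrix of the linearly independent family $\{u_a \otimes v_b\}$ and so is positive definite.

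I do not expect any serious obstacle. The only point requiring a moment's care is the finite-to-general reduction in part (2), where one must check that $S$ really does sit isometrically inside the finite product $\pi_A(S) \times_1 \pi_B(S)$; this is immediate from the additive form of the product metric. The linear-algebraic core --- positive definiteness of the Kronecker product --- is entirely routine, so the whole argument is short.
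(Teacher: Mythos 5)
The paper does not prove this proposition itself---it simply cites \cite[Lemma 2.4.2]{MMS}---but your argument is correct and is the standard one: part (1) is the observation that a finite subspace of a positive definite space is positive definite (for a finite ambient space this is the principal-submatrix fact, seen by extending a test vector by zeros), and part (2) identifies $Z_{A \times_1 B}$ as the Kronecker product $Z_A \otimes Z_B$ and invokes its positive definiteness. Your reduction of the general case of (2) to the finite case via $S \subseteq \pi_A(S) \times_1 \pi_B(S)$ together with part (1) is also handled correctly, so there is nothing to fix.
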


On the other hand, many spaces of geometric interest are \emph{not} of
negative type, and many natural operations fail to preserve positive
definiteness; see \cite[Section 3.2]{MeckPDM} for examples and
references.

\section{Compact metric spaces}
\label{S:compact}

Despite strong and growing interest in the geometry of finite metric
spaces (see e.g.\ \cite{Linial}), it is natural to try to define an
invariant of metric spaces, like magnitude, more generally. The most
obvious context is that of compact spaces.  The general definition of
the magnitude of an enriched category does not help us here, but
several strategies present themselves, including approximating a
compact space by finite subspaces and generalizing the notion of a
weighting to compact spaces.  In section \ref{S:compact-pd} we will
see that there is a canonical (hence ``correct'') extension of
magnitude from finite metric spaces to compact positive definite
spaces, which can be formulated in several ways.  In section
\ref{S:weight-measures} we will investigate a generalization of
weightings to compact spaces, and see that this approach to defining
magnitude agrees with the former one. This approach is of more limited
scope, but often gives the easiest approach to computing magnitude;
using it, we will see that magnitude knows about at least some
intrinsic volumes of certain Riemannian manifolds.  Finally, section
\ref{S:diversity} will introduce another invariant, maximum diversity,
which is closely related to magnitude, and will be a crucial tool in
proving the connection between magnitude and Minkowski dimension.

\subsection{Compact positive definite spaces}
\label{S:compact-pd}

To justify the ``correctness'' of our definition of magnitude for
compact positive definite spaces, we need a topology on the family of
(isometry classes of) compact metric spaces.  Recall that the
\dfn{Hausdorff metric} $d_H$ on the family of compact subsets of a
metric space $X$ is given by
\[
d_H(A,B) = \max\bigl\{ \sup_{a \in A} d(a,B), \sup_{b \in B} d(b,A)\bigr\}.
\]
The \dfn{Gromov--Hausdorff distance} between two compact metric spaces
$A$ and $B$ is
\[
d_{GH}(A,B) = \inf d_H\bigl(\varphi(A),\psi(B)\bigr),
\]
where the infimum is over all metric spaces $X$ and isometric
embeddings $\varphi:A \to X$ and $\psi:B \to X$.  This defines a
metric on the family of isometry classes of compact metric spaces; see
\cite[Chapter 3]{GromMSR}.

The following result follows from the proof of \cite[Theorem
2.6]{MeckPDM}, although our definitions are organized rather
differently in that paper. We give a more streamlined version of the
argument from \cite{MeckPDM}.

\begin{prop}
  \label{T:semicont}
  The quantity
  \begin{equation}
    \label{E:sup}
    M(A) = \sup \Set{\mg{A'} }{A' \subseteq A, \ A' \text{ finite}}
  \end{equation}
  is lower semicontinuous as a function of $A$ (taking values in
  $[0,\infty]$), on the class of compact positive definite metric
  spaces equipped with the Gromov--Hausdorff topology.
\end{prop}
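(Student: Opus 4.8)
The plan is to prove lower semicontinuity directly: given a compact positive definite space $A$ and any $c < M(A)$, I will exhibit a Gromov--Hausdorff neighborhood of $A$ throughout which $M$ stays above $c$. Since $M(A) = \sup\Set{\mg{A'}}{A' \subseteq A \text{ finite}}$, I may fix a finite subspace $A' = \{a_1,\dots,a_k\} \subseteq A$ with $\mg{A'} > c$. The idea is that any $B$ close to $A$ contains a finite subspace $B'$ whose similarity matrix $Z_{B'}$ is close to $Z_{A'}$, and that $\mg{B'}$ --- which is a lower bound for $M(B)$ --- is then close to $\mg{A'}$. The natural tool for the last step is the variational formula of Proposition~\ref{T:finite-sup}, which exhibits magnitude of a positive definite space as a supremum of quotients that depend continuously on the similarity matrix, and hence is automatically well suited to lower bounds.

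First I would carry out the transfer of finite configurations across the Gromov--Hausdorff approximation. If $d_{GH}(A,B) < \eps$, then $A$ and $B$ embed isometrically into a common metric space with Hausdorff distance below $\eps$, so for each $a_i$ I can select a point $b_i \in B$ whose image lies within $\eps$ of the image of $a_i$. Two applications of the triangle inequality give $\abs{d_B(b_i,b_j) - d_A(a_i,a_j)} < 2\eps$ for all $i,j$. Choosing $\eps < \tfrac12\min_{i\neq j} d_A(a_i,a_j)$ keeps the $b_i$ distinct, so $B' = \{b_1,\dots,b_k\}$ is a genuine $k$-point subspace of $B$; and since $t \mapsto e^{-t}$ is uniformly continuous, further shrinking $\eps$ makes $\abs{Z_{B'}(i,j) - Z_{A'}(i,j)}$ as small as desired for every pair $(i,j)$ (the diagonal entries agree exactly, both equal to $1$).

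For the lower bound, note that $A'$ and $B'$ are positive definite by Proposition~\ref{T:finite-pd-operations}(1), so Proposition~\ref{T:finite-sup} applies to both. Let $w \in \R^{A'}$ be the weighting realizing the supremum for $A'$, so that $\frac{(\sum_i w_i)^2}{w^\transp Z_{A'} w} = \mg{A'} > c$. Reading $w$ as a fixed nonzero vector in $\R^{B'}$ via the indexing, the quotient $\frac{(\sum_i w_i)^2}{w^\transp Z_{B'} w}$ differs from $\frac{(\sum_i w_i)^2}{w^\transp Z_{A'} w}$ only through the denominator $w^\transp Z_{B'} w$, which is continuous in the entries of $Z_{B'}$ and stays positive for $Z_{B'}$ near the positive definite matrix $Z_{A'}$. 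Hence, for $\eps$ small enough, this quotient still exceeds $c$, and by Proposition~\ref{T:finite-sup},
\[
M(B) \geq \mg{B'} \geq \frac{\left(\sum_i w_i\right)^2}{w^\transp Z_{B'} w} > c.
\]

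Putting these together, every $B$ with $d_{GH}(A,B)$ sufficiently small satisfies $M(B) > c$; since $c < M(A)$ was arbitrary this gives $\liminf_{B \to A} M(B) \geq M(A)$, which is exactly lower semicontinuity, and the same reasoning covers the value $M(A) = \infty$ by letting $c$ range over all reals. The argument is largely routine, and I expect the only real friction to be that the Gromov--Hausdorff distance controls only distances rather than providing an actual map between $A$ and $B$: the near-isometric finite correspondence and the distinctness of the transferred points have to be produced by hand, which is exactly what the two smallness constraints on $\eps$ accomplish.
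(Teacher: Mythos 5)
Your proposal is correct and follows essentially the same route as the paper: reduce to a finite subspace $A'$ nearly realizing $M(A)$, transfer it to a finite subspace $B'$ of $B$ with distances distorted by at most $2\eps$, and use the weighting of $A'$ as a test vector in the variational formula of Proposition~\ref{T:finite-sup} to lower-bound $\mg{B'}$. The only (immaterial) differences are that the paper allows the transferred points to collide and aggregates weights over preimages, where you instead shrink $\eps$ to keep them distinct, and that the paper records the quantitative estimate $\mg{B'} \ge \mg{A'} - 2\norm{w}_1^2\delta$ where you argue by soft continuity of the denominator.
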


\begin{proof}
  Suppose first that $d_{GH}(A,B) < \delta$ for \emph{finite} positive
  definite spaces $A$ and $B$, and let $w \in \R^A$ be a weighting for
  $A$.  There is a function $f:A \to B$ such that
  $\abs{d(f(a),f(a')) - d(a,a')} < 2\delta$ for all $a,a' \in A$. Define
  $v \in \R^B$ by $v_b = \sum_{a \in f^{-1}(b)} w_a$, and
  $Z_f \in \R^{A \times A}$ by $Z_f(a,a') = e^{-d(f(a),f(a'))}$. Then
  $v^\transp Z_B v = w^\transp Z_f w$, and so
  \[
  \abs{w^\transp Z_A w - v^\transp Z_B v}
  = \abs{ w^\transp (Z_A - Z_f) w} \le \norm{w}_1^2 \norm{Z_A -
    Z_f}_\infty
  < 2 \norm{w}_1^2 \delta.
  \]
  Thus by Proposition \ref{T:finite-sup},
  \begin{equation}
    \label{E:semicont-finite-bound}
  \mg{B} \ge \frac{(\sum_b v_b)^2}{v^\transp Z_B v} \ge \frac{(\sum_a
    w_a)^2}{w^\transp Z_A w + 2 \norm{w}_1^2 \delta} =
  \frac{\mg{A}^2}{\mg{A} + 2 \norm{w}_1^2 \delta} \ge \mg{A} - 2
  \norm{w}_1^2 \delta.
  \end{equation}

  Now for general $A$, assume for simplicity that $M(A) < \infty$ (the
  case $M(A) = \infty$ is handled similarly). Given $\eps > 0$, pick a
  finite subset $A' \subset A$ such that $\mg{A'} \ge M(A) - \eps$,
  and let $w \in \R^{A'}$ be a weighting for $A'$. If $d_{GH}(A,B) <
  \delta$, then there is a finite subset $B'\subseteq B$ such that
  $d_{GH}(A',B') < \delta$, and so by \eqref{E:semicont-finite-bound},
  \[
  M(B) \ge \mg{B'} \ge \mg{A'} - 2 \norm{w}_1^2 \delta \ge M(A) -
  \eps - 2 \norm{w}_1^2 \delta.
  \]
  Therefore $M(B) \ge M(A) - 2 \eps$ when $d_{GH}(A,B)$ is
  sufficiently small.
\end{proof}

Corollary \ref{T:finite-monotone} implies that $M(A) = \abs{A}$ when
$A$ itself is finite and positive definite.  Proposition
\ref{T:semicont} thus implies first of all that magnitude is l.s.c.\
on the class of finite positive definite metric spaces.  It follows
that there is a canonical extension of magnitude to the class of
compact positive definite metric spaces, namely, the \emph{maximal}
l.s.c.\ extension.  Proposition \ref{T:semicont} furthermore implies
that this extension is precisely the function $M$ in \eqref{E:sup}.
For a compact positive definite metric space $(A,d)$, we therefore
define the \dfn{magnitude} $\mg{A}$ to be the value of the supremum
$M(A)$ in \eqref{E:sup}.

Thus magnitude is lower semicontinuous on the class of compact
positive spaces.  This cannot be improved to continuity in general,
even for the class of finite spaces of negative type.  Examples 2.2.8
and 2.4.9 in \cite{MMS} discuss a space $A$ of negative type with six
points, such that $\mg{tA} = 6/(1+4e^{-t})$; thus
$\lim_{t\to 0^+} \mg{tA} = 6/5$, whereas the space $tA$ itself
converges to a one-point space.  On the other hand, magnitude is
continuous when restricted to certain classes of spaces, as we will
see in Corollary \ref{T:pw-mag-cont} and Theorem
\ref{T:star-continuity} below.

Proposition \ref{T:semicont}, Proposition \ref{T:finite-ell1-product},
and Corollary \ref{T:finite-monotone}
yield the following results.

\begin{prop}[{\cite[Lemma 3.1.3]{MMS}}]
  \label{T:compact-monotone}
  If $A$ is a compact positive definite metric space and $\emptyset
  \neq B \subseteq A$, then $1 \le \mg{B} \le \mg{A}$.
\end{prop}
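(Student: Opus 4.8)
The plan is to reduce both inequalities to the finite case, exploiting the characterization $\mg{A} = M(A)$ of magnitude for compact positive definite spaces that was just established. First I would record that, since $B \subseteq A$ and $A$ is positive definite, Proposition \ref{T:finite-pd-operations}(1) guarantees that every finite subspace of $B$ is again positive definite; hence each term appearing in
\[
M(B) = \sup\Set{\mg{B'}}{B' \subseteq B, \ B' \text{ finite}}
\]
is a well-defined magnitude, and by the definition adopted above we have $\mg{B} = M(B)$ and $\mg{A} = M(A)$.

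For the upper bound, the key observation is purely combinatorial: because $B \subseteq A$, every finite subset $B' \subseteq B$ is simultaneously a finite subset of $A$. Thus the index family $\Set{B'}{B' \subseteq B, \ B' \text{ finite}}$ is contained in $\Set{A'}{A' \subseteq A, \ A' \text{ finite}}$, and the supremum defining $M(B)$ is taken over a subfamily of the one defining $M(A)$. Monotonicity of the supremum under enlarging the index set then yields $\mg{B} = M(B) \le M(A) = \mg{A}$ at once.

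For the lower bound I would use only the nonemptiness of $B$: picking any point $b \in B$, the one-point subspace $\{b\}$ has $Z_{\{b\}} = (1)$ and hence magnitude $1$, so $M(B) \ge 1$; more uniformly, Corollary \ref{T:finite-monotone} already gives $\mg{B'} \ge 1$ for every nonempty finite $B' \subseteq B$, so the supremum is bounded below by $1$. This gives $\mg{B} = M(B) \ge 1$. I do not expect any genuine obstacle here: the real content has already been discharged in Proposition \ref{T:semicont} and the surrounding discussion identifying $\mg{\cdot}$ with $M(\cdot)$, after which both bounds are formal consequences of how suprema behave under inclusion of index families. The only point meriting a line of care is confirming that all the finite magnitudes in these suprema are defined, which is exactly the inheritance of positive definiteness by subspaces noted at the outset.
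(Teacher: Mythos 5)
Your proof is correct and follows essentially the same route the paper intends: once magnitude of a compact positive definite space is defined as the supremum $M(\cdot)$ over finite subsets, the upper bound is monotonicity of suprema under inclusion of index families, and the lower bound comes from the finite case (Corollary \ref{T:finite-monotone}, or just a one-point subspace). Nothing is missing.
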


\begin{prop}[{\cite[Corollary 2.7]{MeckPDM}}]
  \label{T:finite-approximation}
  Let $A$ be a compact positive definite metric space, and let
  $\{A_k\}$ be any sequence of compact subsets of $A$ such that $A_k
  \xrightarrow{k \to \infty} A$ in the Hausdorff topology. Then
  $\mg{A} = \lim_{k\to \infty} \mg{A_k}$.
\end{prop}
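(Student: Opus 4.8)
The plan is to establish the two inequalities $\limsup_{k} \mg{A_k} \le \mg{A}$ and $\liminf_{k} \mg{A_k} \ge \mg{A}$ separately, relying on the two structural facts already in hand: monotonicity of magnitude under inclusion (Proposition \ref{T:compact-monotone}) and its lower semicontinuity in the Gromov--Hausdorff topology (Proposition \ref{T:semicont}). Recall that, for a compact positive definite space, magnitude was defined to be exactly the supremum $M$ appearing in \eqref{E:sup}, so both results apply directly to $\mg{\,\cdot\,}$.

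For the upper bound I would simply invoke monotonicity. Each $A_k$ is a nonempty compact subset of the compact positive definite space $A$, so Proposition \ref{T:compact-monotone} gives $\mg{A_k} \le \mg{A}$ for every $k$, whence $\limsup_{k} \mg{A_k} \le \mg{A}$.

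For the lower bound, the key preliminary observation is that Hausdorff convergence of subsets of $A$ entails Gromov--Hausdorff convergence. Taking the ambient space to be $A$ itself and using the inclusion $A_k \hookrightarrow A$ together with the identity map on $A$ as the two isometric embeddings in the definition of $d_{GH}$, one obtains $d_{GH}(A_k, A) \le d_H(A_k, A)$, and the right-hand side tends to $0$ by hypothesis. Lower semicontinuity (Proposition \ref{T:semicont}) then yields $\mg{A} \le \liminf_{k} \mg{A_k}$. Combining the two bounds gives
\[
\mg{A} \le \liminf_{k} \mg{A_k} \le \limsup_{k} \mg{A_k} \le \mg{A},
\]
so the limit exists and equals $\mg{A}$. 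The argument has no serious obstacle; the only point needing a moment's care is the inequality $d_{GH} \le d_H$ for subsets of a common ambient space, which is precisely what permits the Gromov--Hausdorff semicontinuity of Proposition \ref{T:semicont} to be applied under the present Hausdorff-convergence hypothesis.
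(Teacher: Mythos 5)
Your proof is correct and follows exactly the route the paper intends: it states that Proposition \ref{T:finite-approximation} is yielded by Proposition \ref{T:semicont} (lower semicontinuity in the Gromov--Hausdorff topology) together with monotonicity, which is precisely your combination of the two one-sided bounds. The only detail you add that the paper leaves implicit is the observation that $d_{GH}\le d_H$ for subsets of a common ambient space, and that step is correct.
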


\begin{prop}[{\cite[Proposition 3.1.4]{MMS}}]
  \label{T:compact-ell1-product}
  If $A$ and $B$ are compact positive definite metric spaces, then
  $\mg{A \times_1 B} = \mg{A} \mg{B}$.
\end{prop}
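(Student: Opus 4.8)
The plan is to reduce everything to the finite case via the supremum definition $\mg{A \times_1 B} = \sup\Set{\mg{S}}{S \subseteq A \times_1 B, \ S \text{ finite}}$, combining finite multiplicativity (Proposition~\ref{T:finite-ell1-product}) with monotonicity (Corollary~\ref{T:finite-monotone}). First I would check that $A \times_1 B$ is itself positive definite, so that its magnitude is defined in the first place: any finite subspace $S \subseteq A \times_1 B$ is contained in $A' \times_1 B'$, where $A'$ and $B'$ are the projections of $S$ to $A$ and $B$. These projections are finite and positive definite, being subspaces of positive definite spaces (Proposition~\ref{T:finite-pd-operations}(1)), so their $\ell_1$-product is positive definite by Proposition~\ref{T:finite-pd-operations}(2), and hence so is the subspace $S$.

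For the inequality $\mg{A \times_1 B} \ge \mg{A}\mg{B}$, I would observe that for any finite $A' \subseteq A$ and $B' \subseteq B$, the set $A' \times_1 B'$ is a finite positive definite subspace of $A \times_1 B$, so Proposition~\ref{T:finite-ell1-product} gives $\mg{A' \times_1 B'} = \mg{A'}\mg{B'}$ and therefore $\mg{A \times_1 B} \ge \mg{A'}\mg{B'}$. Taking the supremum over $A'$ and $B'$ independently, and using $\mg{A'}, \mg{B'} \ge 1 > 0$ (Corollary~\ref{T:finite-monotone}) so that the supremum of the products factors as the product of the suprema, yields $\mg{A \times_1 B} \ge \mg{A}\mg{B}$. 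For the reverse inequality I would take an arbitrary finite $S \subseteq A \times_1 B$ and again let $A'$, $B'$ be its projections; then $S$ is a subspace of the finite positive definite space $A' \times_1 B'$, so by Corollary~\ref{T:finite-monotone} and Proposition~\ref{T:finite-ell1-product}, $\mg{S} \le \mg{A' \times_1 B'} = \mg{A'}\mg{B'} \le \mg{A}\mg{B}$. Taking the supremum over all finite $S$ gives $\mg{A \times_1 B} \le \mg{A}\mg{B}$.

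The computations are routine; the step I expect to require the most care is the factorization of the supremum of products into the product of suprema, which is exactly where positivity (the lower bound $\mg{\,\cdot\,} \ge 1$) is essential, and which simultaneously forces the correct behavior when $\mg{A}$ or $\mg{B}$ is infinite. In that degenerate case the lower-bound argument already yields $\mg{A \times_1 B} = \infty$, since one may hold one factor equal to a single point (of magnitude $1$) while driving the magnitude of the other factor to infinity, so no $0 \cdot \infty$ ambiguity arises.

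An alternative route, should one prefer to avoid the supremum manipulation, is to pick finite $\tfrac1k$-nets $A_k \subseteq A$ and $B_k \subseteq B$, so that $A_k \to A$ and $B_k \to B$ in the Hausdorff metric, and then pass to the limit in the identity $\mg{A_k \times_1 B_k} = \mg{A_k}\mg{B_k}$ using Proposition~\ref{T:finite-approximation}. The only additional thing to verify in this approach is the Hausdorff convergence of the products, which is immediate from $d_H(A_k \times_1 B_k, A \times_1 B) \le d_H(A_k, A) + d_H(B_k, B)$, valid because a nearest point to $(a,b)$ in $A_k \times_1 B_k$ lies within $d(a, A_k) + d(b, B_k)$.
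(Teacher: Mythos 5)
Your proof is correct and follows essentially the same route the paper indicates: the paper derives this proposition directly from the supremum definition of magnitude for compact positive definite spaces together with Proposition~\ref{T:finite-ell1-product} and Corollary~\ref{T:finite-monotone}, which is exactly the reduction to finite subspaces (via projections) that you carry out. Your attention to the factorization of the supremum and to the case of infinite magnitude is a welcome filling-in of details the paper leaves implicit.
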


Proposition \ref{T:semicont} justifies the above definition of
magnitude as the ``correct'' one for a compact positive definite space
$A$. Nevertheless, for both {\ae}sthetic and practical reasons, it is
desirable to be able to work directly with $A$ itself, as opposed to
approximations of $A$ by finite subspaces. Two different more direct
approaches to defining magnitude for compact positive definite spaces
were developed in \cite{MeckPDM,MeckMDC}. In essence, these papers
introduced two different topologies on the space
$\Set{ w \in \R^A }{\supp w \text{ is finite}}$. The topology used in
\cite{MeckPDM} has the advantage of being more familiar, whereas the
topology in \cite{MeckMDC} has the advantage of being better suited to
the analysis of magnitude. In particular, the topology used in
\cite{MeckMDC} can be dualized in a way that presents a new set of
tools to study magnitude.  In the pursuit of our goal of proceeding as
quickly as possible to geometric results, here we will go straight to
the dual version.

Recall that a \dfn{positive definite kernel} on a space $X$ is a
function $K: X \times X \to \C$ such that, for every finite set
$A \subseteq X$, the matrix $[K(a,b)]_{a,b \in A} \in \C^{A \times A}$
is positive definite.  Given a positive definite kernel on $X$, the
\dfn{reproducing kernel Hilbert space} (RKHS) $\mathcal{H}$ on $X$
with kernel $K$ is the completion of the linear span of the functions
$k_x(y) = K(x,y)$ with respect to the inner product given by
\[
\inprod{k_x}{k_y}_{\mathcal{H}} = K(x,y)
\]
(see \cite{Aronszajn}).  If $f \in \mathcal{H}$, then $f(x) =
\inprod{f}{k_x}_{\mathcal{H}}$ for every $x \in X$, and consequently
\begin{equation}
  \label{E:RKHS-bound}
  \abs{f(x)} \le \norm{f}_{\mathcal{H}} \norm{k_x}_{\mathcal{H}}
  = \norm{f}_{\mathcal{H}} \sqrt{K(x,x)}
\end{equation}
by the Cauchy--Schwarz inequality.

Now if $(X,d)$ is a positive definite metric space, then
$K(x,y) = e^{-d(x,y)}$ is a positive definite kernel on $X$. We will
refer to the corresponding RKHS as the RKHS $\mathcal{H}$ for $X$.

\begin{thm}[{\cite[Theorem 4.1 and Proposition 4.2]{MeckMDC}}]
  \label{T:inf}
  Let $X$ be a positive definite metric space, and let $A \subseteq X$
  be compact. Then $\mg{A} < \infty$ if and only if there exists a
  function $h \in \mathcal{H}$ such that $h \equiv 1$ on $A$.  In that
  case,
  \[
  \mg{A} = \inf \Set{ \norm{h}_\mathcal{H}^2 }{h \in \mathcal{H}, \ h
    \equiv 1 \text{ on } A}.
  \]
  The infimum is achieved for a unique function $h$. If
  $f \in \mathcal{H}$ also satisfies $f \equiv 1$ on $A$, then
  $\mg{A} = \inprod{f}{h}_{\mathcal{H}}$.
\end{thm}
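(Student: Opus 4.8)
The plan is to prove this ``dual'' characterization by first pinning down the finite case, where everything is linear algebra, and then passing to the compact case by a weak-compactness argument. Write $\mu(A) = \inf \Set{\norm{h}_\mathcal{H}^2}{h \in \mathcal{H}, \ h \equiv 1 \text{ on } A}$, with the convention $\mu(A) = \infty$ when no such $h$ exists, and recall that for compact $A$ we have \emph{defined} $\mg{A} = M(A)$ as the supremum in \eqref{E:sup}.

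First I would treat a finite subset $A' \subseteq A$. Since the constraint $h \equiv 1$ on $A'$ reads $\inprod{h}{k_a}_\mathcal{H} = 1$ for the finitely many $a \in A'$, orthogonal projection shows the minimal-norm solution must lie in $\vspan \Set{k_a}{a \in A'}$; writing $h = \sum_{a \in A'} c_a k_a$, the constraints become $Z_{A'} c = e$, so $c$ is the unique weighting $w$ on $A'$ (here $Z_{A'}$ is positive definite, hence invertible), and $\norm{h}_\mathcal{H}^2 = c^\transp Z_{A'} c = w^\transp e = \sum_a w_a = \mg{A'}$. Thus $\mu(A') = \mg{A'}$, attained; this is the dual of Proposition \ref{T:finite-sup}. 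Because any $h$ with $h \equiv 1$ on $A$ restricts to a competitor for each finite $A' \subseteq A$, this already gives the easy inequality $\mu(A) \ge \sup_{A'} \mg{A'} = \mg{A}$ (trivially if no admissible $h$ exists).

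The substance is the reverse inequality, together with existence of an admissible $h$ when $\mg{A} < \infty$. Here I would take finite subsets $A_k \subseteq A$ with $A_k \to A$ in the Hausdorff metric (e.g.\ finite $\tfrac1k$-nets), so that $\mg{A_k} \to \mg{A}$ by Proposition \ref{T:finite-approximation}, and let $h_k \in \mathcal{H}$ be the finite minimizers just constructed, so $\norm{h_k}_\mathcal{H}^2 = \mg{A_k} \to \mg{A} < \infty$ and $h_k \equiv 1$ on $A_k$. Since $\{h_k\}$ is norm-bounded, after passing to a subsequence it converges weakly to some $h \in \mathcal{H}$, whence $\norm{h}_\mathcal{H}^2 \le \liminf_k \norm{h_k}_\mathcal{H}^2 = \mg{A}$ by weak lower semicontinuity of the norm. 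The key step is to check $h \equiv 1$ on \emph{all} of $A$: given $x \in A$, choose $a_k \in A_k$ with $a_k \to x$; then, using $h_k(a_k) = \inprod{h_k}{k_{a_k}}_\mathcal{H} = 1$,
\[
h(x) - 1 = \inprod{h - h_k}{k_x}_\mathcal{H} + \inprod{h_k}{k_x - k_{a_k}}_\mathcal{H},
\]
and both terms vanish in the limit --- the first by weak convergence, the second because $\norm{h_k}_\mathcal{H}$ is bounded while $\norm{k_x - k_{a_k}}_\mathcal{H}^2 = 2 - 2e^{-d(x,a_k)} \to 0$ by continuity of $y \mapsto k_y$. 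This yields an admissible $h$ with $\norm{h}_\mathcal{H}^2 \le \mg{A}$, so $\mu(A) \le \mg{A}$; with the previous paragraph, $\mu(A) = \mg{A}$. In particular an admissible $h$ exists exactly when $\mg{A} < \infty$, since an admissible $h$ always forces $\mu(A) \le \norm{h}_\mathcal{H}^2 < \infty$, and hence $\mg{A} \le \mu(A) < \infty$. This verification of continuity of $y \mapsto k_y$, which is precisely where compactness of $A$ and the Hausdorff approximation enter, is the main obstacle.

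It remains to record uniqueness and the inner-product formula, both of which follow from the geometry of the constraint set $C = \Set{h \in \mathcal{H}}{h \equiv 1 \text{ on } A}$. Each point-evaluation $h \mapsto h(x) = \inprod{h}{k_x}_\mathcal{H}$ is continuous by \eqref{E:RKHS-bound}, so $C$ is a nonempty (when $\mg{A} < \infty$) closed affine subspace, and the Hilbert projection theorem gives a unique minimal-norm element $h$, which is the minimizer found above. Its defining orthogonality property is $h \perp (C - C) = \Set{g \in \mathcal{H}}{g \equiv 0 \text{ on } A}$; hence for any $f \in C$ we have $f - h \in C - C$, so $\inprod{f}{h}_\mathcal{H} = \inprod{h}{h}_\mathcal{H} = \norm{h}_\mathcal{H}^2 = \mg{A}$, as claimed.
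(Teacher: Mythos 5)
Your argument is correct. It shares the paper's overall skeleton (reduce to finite subsets via Proposition \ref{T:finite-sup}, prove the easy inequality by restriction, then produce an admissible $h$), but the construction of $h$ in the hard direction is genuinely different. The paper never passes through a sequence of finite minimizers: it observes that the bound $(\sum_b w_b)^2 \le \mg{A}\,\norm{f_w}_{\mathcal{H}}^2$ makes $f_w \mapsto \sum_b w_b$ a bounded linear functional on the span of the kernel functions $\Set{k_a}{a \in A}$, and obtains $h$ in one stroke as its Riesz representer, reading off $h(a)=1$ by testing against $f_w = k_a$. You instead solve the finite problems explicitly --- nicely identifying the finite minimizer's coefficients with the weighting, which makes the duality with Proposition \ref{T:finite-sup} concrete --- and then extract $h$ as a weak limit of these minimizers along a sequence of finite Hausdorff approximants, using weak lower semicontinuity of the norm and continuity of $y \mapsto k_y$ to verify $h \equiv 1$ on all of $A$. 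The trade-off: your route leans on Proposition \ref{T:finite-approximation} (hence on the semicontinuity machinery of Proposition \ref{T:semicont}), or at least on a net-plus-monotonicity argument to get $\mg{A_k} \to \mg{A}$, whereas the paper's proof needs only the definition of $\mg{A}$ as a supremum; in exchange you get a more constructive picture (indeed, since $\norm{h_k}_{\mathcal{H}} \to \norm{h}_{\mathcal{H}}$, your finite minimizers actually converge to the potential function in norm, not just weakly). Your closing appeal to the Hilbert projection theorem for uniqueness and the identity $\inprod{f}{h}_{\mathcal{H}} = \norm{h}_{\mathcal{H}}^2$ is exactly the ``elementary Hilbert space geometry'' the paper leaves implicit.
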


\begin{proof}
  First observe that if $w \in \R^B$ for a finite subset
  $B \subseteq X$, and $f_w = \sum_{b \in B} w_b e^{-d(\cdot, b)}$,
  then
  \begin{equation}
    \label{E:H-norm-weighting}
    w^\transp Z_B w = \sum_{a,b \in B} w_a e^{-d(a,b)} w_b =
    \norm{f_w}_{\mathcal{H}}^2.
  \end{equation}
  
  Now suppose that $\mg{A} < \infty$. If $B \subseteq A$ is finite and
  $w \in \R^B$, then by Proposition \ref{T:finite-sup},
  \eqref{E:H-norm-weighting}, and the definition of $\mg{A}$,
  \[
  \biggl(\sum_{b \in B} w_b\biggr)^2 \le \mg{B}
  \norm{f_w}_{\mathcal{H}}^2 \le \mg{A} \norm{f_w}_{\mathcal{H}}^2.
  \]
  Thus the linear functional \( f_w \mapsto \sum_{b\in B} w_b \) on
  the subspace $\Set{f_w}{w \in \R^B, \ B \subseteq A \text{ finite}}
  \subseteq \mathcal{H}$ has norm at most $\sqrt{\mg{A}}$. Therefore
  there is a function $h \in \mathcal{H}$ with
  $\norm{h}_{\mathcal{H}}^2 = \mg{A}$ such that
  \[
  \sum_{b \in B} w_b = \inprod{f_w}{h}_{\mathcal{H}} = \sum_{b \in B}
  w_b h(b)
  \]
  for every $f_w$; taking $f_w = e^{-d(\cdot, a)}$ for $a \in A$
  yields $h(a) = 1$.

  Next suppose that there exists an $h \in \mathcal{H}$ such that
  $h \equiv 1$ on $A$. Then for any finite subset $B \subseteq A$ and
  $w \in \R^B$, by the Cauchy--Schwarz inequality,
  \[
  \biggl\vert\sum_{b\in B} w_b\biggr\vert = \abs{\inprod{h}{f_w}}
  \le \norm{h}_{\mathcal{H}} \norm{f_w}_{\mathcal{H}}.
  \]
  Equation \eqref{E:H-norm-weighting} and Proposition 
  \ref{T:finite-sup} then imply that $\mg{B} \le
  \norm{h}_{\mathcal{H}}^2$, and so by definition $\mg{A} \le
  \norm{h}_{\mathcal{H}}^2$.

  The above arguments prove both the ``if and only if'' statement and
  the infimum expression for $\mg{A}$.  The last two statements
  follow from elementary Hilbert space geometry.
\end{proof}

We will call the unique function $h$ which achieves the infimum in
Theorem \ref{T:inf} the \dfn{potential function} of $A$.  Theorem
\ref{T:inf} will prove its worth in sections \ref{S:Fourier} and
\ref{S:Euclidean} below.

For now, we consider what has happened to weightings, which were
central to the original category-inspired definition of magnitude, but
have vanished from the scene in Theorem \ref{T:inf}.  Weightings of
finite subspaces of $X$ are naturally identified with elements of the
dual space $\mathcal{H}^*$, if we restrain ourselves from the usual
impulse to identify $\mathcal{H}^*$ with $\mathcal{H}$ itself.  We can
then identify a weighting of a compact subspace $A$ with finite
magnitude as an element of $\mathcal{H}^*$, specifically the element
of $\mathcal{H}^*$ represented by the potential function $h$. See
\cite{MeckMDC} for details.

\subsection{Weight measures}
\label{S:weight-measures}

Proposition \ref{T:semicont} may justify the definition of magnitude
adopted in the previous section as the canonical correct definition,
but it has two deficiencies.  First, it applies only to positive
definite spaces, and second, it lies quite far from the original
category-inspired definition, being fundamentally based instead on the
reformulation in Proposition \ref{T:finite-sup}.  The second drawback
is to some extent addressed in the last paragraph of the previous
section, though still only for positive definite spaces.

In this section we discuss another approach to defining magnitude
for compact metric spaces, first used in \cite{Willerton-homog}, which
more closely parallels the original definition for finite spaces.

A \dfn{weight measure} on a compact metric space $(A,d)$ is a finite
signed Borel measure $\mu$ on $A$ such that
\[
\int_A e^{-d(a,b)} \ d\mu(b) = 1
\]
for every $a \in A$.

A finite metric space $A$ possesses a weight measure $\mu$ if and only
if it possesses a weighting $w \in \R^A$, with the correspondence
given by $w_a = \mu(\{a\})$. The magnitude of $A$ is in that case
\[
\mg{A} = \sum_{a \in A} w_a = \mu(A).
\]
This suggests defining the magnitude of a compact metric space to be
$\mg{A} = \mu(A)$ whenever $A$ possesses a weight measure $\mu$.  The
following result shows that doing so agrees with the definition
adopted in the previous section, whenever both definitions apply.

\begin{prop}[{\cite[Theorem 2.3]{MeckPDM}}]
  \label{T:weight-measure-mag}
  Suppose that $A$ is a compact positive definite metric space with
  weight measure $\mu$.  Then $\mg{A} = \mu(A)$.
\end{prop}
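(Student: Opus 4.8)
The plan is to exploit the reproducing kernel Hilbert space description of magnitude in Theorem \ref{T:inf}, turning the weight measure $\mu$ into an explicit competitor for the infimum. Concretely, I would form the vector-valued (Bochner) integral
\[
h_\mu = \int_A k_b \dee\mu(b) \in \mathcal{H},
\]
where $k_b(y) = e^{-d(b,y)}$. The key preliminary is that this integral genuinely defines an element of $\mathcal{H}$: since $\norm{k_b - k_{b'}}_{\mathcal{H}}^2 = 2\bigl(1 - e^{-d(b,b')}\bigr)$, the map $b \mapsto k_b$ is continuous from the compact space $A$ into $\mathcal{H}$ and uniformly bounded (indeed $\norm{k_b}_{\mathcal{H}} = 1$), so it is Bochner integrable against the finite signed measure $\mu$.

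Granting this, the reproducing property together with the fact that bounded linear functionals pass through the Bochner integral gives, for every $a \in A$,
\[
h_\mu(a) = \inprod{h_\mu}{k_a}_{\mathcal{H}} = \int_A \inprod{k_b}{k_a}_{\mathcal{H}} \dee\mu(b) = \int_A e^{-d(a,b)} \dee\mu(b) = 1,
\]
the last equality being exactly the defining property of a weight measure. Thus $h_\mu \equiv 1$ on $A$ and $h_\mu \in \mathcal{H}$, so by Theorem \ref{T:inf} the magnitude $\mg{A}$ is finite and the potential function $h$ of $A$ exists.

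To finish, I would compute $\inprod{h_\mu}{h}_{\mathcal{H}}$ in two ways. On one hand, by the final assertion of Theorem \ref{T:inf} applied to $f = h_\mu$ (which satisfies $h_\mu \equiv 1$ on $A$), we have $\inprod{h_\mu}{h}_{\mathcal{H}} = \mg{A}$. On the other hand, passing the inner product through the Bochner integral as above and using $h \equiv 1$ on $A$,
\[
\inprod{h_\mu}{h}_{\mathcal{H}} = \int_A \inprod{k_b}{h}_{\mathcal{H}} \dee\mu(b) = \int_A h(b) \dee\mu(b) = \int_A 1 \dee\mu(b) = \mu(A).
\]
Comparing the two expressions yields $\mg{A} = \mu(A)$.

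The main obstacle is the technical justification of the vector-valued integral $h_\mu = \int_A k_b \dee\mu(b)$: one must verify that $b \mapsto k_b$ is strongly measurable and integrable, so that the Bochner integral exists in $\mathcal{H}$, and that it commutes with the bounded linear functionals $\inprod{\cdot}{g}_{\mathcal{H}}$ (equivalently, with pointwise evaluation). Once this is in place, everything reduces to the two short computations above. If one prefers to avoid the final statement of Theorem \ref{T:inf}, the same conclusion follows from noting directly that $\norm{h_\mu}_{\mathcal{H}}^2 = \int_A\int_A e^{-d(a,b)}\dee\mu(a)\dee\mu(b) = \mu(A)$, which gives $\mg{A} \le \mu(A)$ via the infimum characterization, together with the Cauchy--Schwarz bound $\mu(A) = \inprod{h_\mu}{h}_{\mathcal{H}} \le \norm{h_\mu}_{\mathcal{H}}\norm{h}_{\mathcal{H}} = \sqrt{\mu(A)}\sqrt{\mg{A}}$ for the reverse inequality.
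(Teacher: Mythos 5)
Your proof is correct and is essentially the paper's own argument: the element $h_\mu = \int_A k_b \dee\mu(b)$ you build as a Bochner integral is precisely the Riesz representative $g$ of the functional $f \mapsto \int f \dee\mu$ that the paper constructs (the paper justifies boundedness of this functional via \eqref{E:RKHS-bound} rather than via Bochner integrability, but these are two descriptions of the same element of $\mathcal{H}$). From there both arguments proceed identically, showing $g \equiv 1$ on $A$ and invoking the final statement of Theorem \ref{T:inf} to get $\mg{A} = \inprod{g}{h}_{\mathcal{H}} = \int h \dee\mu = \mu(A)$.
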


\begin{proof}
  For any finite signed measure $\mu$ on $A$ and $f \in \mathcal{H}$,
  \[
  \abs{\int f \ d\mu} \le \norm{f}_{\infty} \norm{\mu}_{TV} \le
  \norm{f}_{\mathcal{H}} \norm{\mu}_{TV}
  \]
  by \eqref{E:RKHS-bound} (since $K(x,x) = 1$ here), where
  $\norm{\mu}_{TV}$ denotes the total variation norm of $\mu$.
  Therefore $f \mapsto \int f \ d\mu$ is a bounded linear functional
  on $\mathcal{H}$, represented by some $g \in \mathcal{H}$. So for
  each $a \in A$,
  \[
  1 = \int e^{-d(a,b)} \ d\mu(b) = \inprod{e^{-d(\cdot,b)}}{g}_{\mathcal{H}} = g(a).
  \]
  Then by the last statement of Theorem \ref{T:inf}, if $h$ is the
  potential function of $A$, then
  \[
  \mg{A} = \inprod{g}{h}_{\mathcal{H}} = \int h \ d\mu = \mu(A).
  \qedhere
  \]
\end{proof}

In fact it can be shown that $g = h$ in the proof above. 

We therefore define the \dfn{magnitude} of a compact metric
space $A$ with a weight measure $\mu$ to be $\mg{A} := \mu(A)$, with
Proposition \ref{T:weight-measure-mag}'s assurance that when $A$ is
positive definite, this definition is consistent with the previous one.
 
A first nontrivial example is a compact interval $[a,b] \subseteq \R$.
A straightforward computation (see \cite[Theorem
2]{Willerton-homog}) shows that
\begin{equation}
  \label{E:interval-weight-measure}
  \mu_{[a,b]} = \frac{1}{2} (\delta_a + \lambda_{[a,b]} + \delta_b)
\end{equation}
is a weight measure for $[a,b]$, where $\delta_x$ denotes the point
mass at $x$ and $\lambda_{[a,b]}$ denotes Lebesgue measure restricted
to $[a,b]$. It follows that
\begin{equation}
  \label{E:interval-magnitude}
  \mg{[a,b]} = 1 + \frac{b-a}{2}.
\end{equation}
See \cite{SchaWLP} for a contention that (up to the $\frac{1}{2}$
scaling factor) this is the ``correct'' size of an interval. In any
case, the appearance of the length $(b-a)$ gives the first compelling
evidence that magnitude knows about genuinely ``geometric''
information for infinite spaces.

The following easy consequence of Fubini's theorem further extends the
reach of Propositions~\ref{T:finite-ell1-product} and
\ref{T:compact-ell1-product}.

\begin{prop}
  \label{T:weight-measure-product}
  If $\mu_A$ and $\mu_B$ are weight measures on compact metric spaces
  $A$ and $B$, then $\mu_A \otimes \mu_B$ is a weight measure on $A
  \times_1 B$, and so $\mg{A \times_1 B} = \mg{A} \mg{B}$.
\end{prop}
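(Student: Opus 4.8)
The plan is to verify directly that the product measure $\mu_A \otimes \mu_B$ satisfies the defining equation of a weight measure on $A \times_1 B$, and then read off the magnitude from the total mass. The key observation is that the metric on $A \times_1 B$ is additive, so the exponential kernel factors: for points $(a,b)$ and $(a',b')$ in $A \times B$,
\[
e^{-d((a,b),(a',b'))} = e^{-d_A(a,a')} \, e^{-d_B(b,b')}.
\]
This multiplicative structure is exactly what lets Fubini's theorem do the work.

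First I would fix a point $(a,b) \in A \times_1 B$ and compute the integral of $e^{-d((a,b),\cdot)}$ against $\mu_A \otimes \mu_B$. Using the factorization of the kernel and Fubini's theorem (applicable since $\mu_A, \mu_B$ are finite signed measures and the kernel is bounded by $1$), I would write
\[
\int_{A \times B} e^{-d((a,b),(a',b'))} \, d(\mu_A \otimes \mu_B)(a',b')
= \left( \int_A e^{-d_A(a,a')} \, d\mu_A(a') \right)
\left( \int_B e^{-d_B(b,b')} \, d\mu_B(b') \right).
\]
Each factor on the right equals $1$, by the assumption that $\mu_A$ and $\mu_B$ are weight measures on $A$ and $B$ respectively. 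Hence the whole expression equals $1$ for every $(a,b)$, which is precisely the condition for $\mu_A \otimes \mu_B$ to be a weight measure on $A \times_1 B$.

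Finally, since $A \times_1 B$ possesses the weight measure $\mu_A \otimes \mu_B$, its magnitude is the total mass of that measure, and by the product structure this factors as
\[
\mg{A \times_1 B} = (\mu_A \otimes \mu_B)(A \times B) = \mu_A(A) \, \mu_B(B) = \mg{A} \, \mg{B},
\]
using the definition of magnitude via weight measures in each equality. There is no real obstacle here: the only point requiring any care is justifying the application of Fubini, but the boundedness of the kernel together with the finiteness of the signed measures makes this routine. The entire argument is a clean separation of variables, which is why the statement follows so directly once the additive-metric-to-multiplicative-kernel translation is made explicit.
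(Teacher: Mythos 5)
Your argument is correct and is exactly the route the paper intends: the paper states the proposition as an ``easy consequence of Fubini's theorem'' without writing out details, and your factorization of the kernel $e^{-d((a,b),(a',b'))} = e^{-d_A(a,a')}e^{-d_B(b,b')}$ followed by Fubini and the total-mass computation is precisely that argument. Nothing is missing.
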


The chief drawback to the definition of magnitude in terms of weight
measures is that many interesting spaces do not possess weight
measures.  For example, the results of \cite{BaCa} imply that balls in
$\ell_2^3$ do not possess weight measures (rather, their weightings
turn out to be higher-order distributions), and numerical computations
in \cite{Willerton-heuristic} suggest that squares and discs in
$\ell_2^2$ also do not possess weight measures.

On the other hand, the following result can be interpreted as saying
that compact positive definite spaces ``almost'' possess weight
measures.

\begin{prop}[{\cite[Theorems 2.3 and 2.4]{MeckPDM}}]
  \label{T:compact-sup}
  If $A$ is a compact positive definite metric space, then
  \[
  \mg{A} = \sup \Set{\frac{\mu(A)^2}{\int_A \int_A e^{-d(a,b)} \
      d\mu(a) \ d\mu(b)}}{ \mu \in M(A), \ \int_A \int_A e^{-d(a,b)} \
      d\mu(a) \ d\mu(b) \neq 0},
  \]
  where $M(A)$ denotes the space of finite signed Borel measures on
  $A$.  The supremum is attained if and only if $A$ possesses a weight
  measure; in that case it is attained precisely by scalar multiples
  of weight measures.
\end{prop}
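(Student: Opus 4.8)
The plan is to transport the finite-dimensional variational formula of Proposition~\ref{T:finite-sup} into the RKHS $\mathcal{H}$ of $X$ via the assignment sending a finite signed measure $\mu \in M(A)$ to the smeared kernel function $f_\mu(x) = \int_A e^{-d(x,b)} \, d\mu(b)$. As in the proof of Proposition~\ref{T:weight-measure-mag}, the functional $f \mapsto \int_A f \, d\mu$ is bounded on $\mathcal{H}$ (by \eqref{E:RKHS-bound}, using $K(x,x)=1$), hence represented by an element of $\mathcal{H}$; a short computation identifies this element with $f_\mu$ and yields
\[
\norm{f_\mu}_{\mathcal{H}}^2 = \int_A \int_A e^{-d(a,b)} \, d\mu(a) \, d\mu(b),
\]
which is exactly the denominator in the statement. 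Thus the index set is precisely $\Set{\mu \in M(A)}{f_\mu \neq 0}$, and the quantity being maximized is $\mu(A)^2 / \norm{f_\mu}_{\mathcal{H}}^2$.

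For the upper bound I would first dispose of the case $\mg{A} = \infty$, where $\sup \le \mg{A}$ is vacuous. When $\mg{A} < \infty$, Theorem~\ref{T:inf} supplies the potential function $h \in \mathcal{H}$ with $h \equiv 1$ on $A$ and $\norm{h}_{\mathcal{H}}^2 = \mg{A}$. Since $\mu(A) = \int_A h \, d\mu = \inprod{h}{f_\mu}_{\mathcal{H}}$, the Cauchy--Schwarz inequality gives $\mu(A)^2 \le \norm{h}_{\mathcal{H}}^2 \norm{f_\mu}_{\mathcal{H}}^2 = \mg{A}\,\norm{f_\mu}_{\mathcal{H}}^2$, whence $\mu(A)^2 / \norm{f_\mu}_{\mathcal{H}}^2 \le \mg{A}$ for every admissible $\mu$.

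For the lower bound I would test the supremum against the weightings of finite subspaces. For a finite $B \subseteq A$ with weighting $w$, the discrete measure $\mu = \sum_{b \in B} w_b \delta_b$ satisfies $\mu(A) = \sum_b w_b = \mg{B}$ and, by \eqref{E:H-norm-weighting} together with $Z_B w = e$, $\norm{f_\mu}_{\mathcal{H}}^2 = w^\transp Z_B w = \mg{B}$; since $B$ is positive definite, Corollary~\ref{T:finite-monotone} gives $\mg{B} \ge 1 > 0$, so $\mu$ is admissible and contributes the value $\mu(A)^2 / \norm{f_\mu}_{\mathcal{H}}^2 = \mg{B}$. Taking the supremum over finite $B \subseteq A$ and invoking the definition $\mg{A} = \sup_B \mg{B}$ from \eqref{E:sup} gives $\sup \ge \mg{A}$ (also when $\mg{A} = \infty$), so the two bounds combine to equality.

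The attainment clause then follows from the equality case of the Cauchy--Schwarz step. If $\mg{A} < \infty$ and some admissible $\mu$ attains the supremum, equality holds in $\mu(A)^2 \le \norm{h}_{\mathcal{H}}^2 \norm{f_\mu}_{\mathcal{H}}^2$, forcing $f_\mu = \lambda h$ for a scalar $\lambda$; admissibility ($\norm{f_\mu}_{\mathcal{H}} \neq 0$) rules out $\lambda = 0$, and then $f_\mu \equiv \lambda$ on $A$ says exactly that $\mu/\lambda$ is a weight measure, so $\mu$ is a nonzero scalar multiple of one. Conversely, if $\mu$ is a weight measure then $f_\mu \equiv 1$ on $A$ gives $\norm{f_\mu}_{\mathcal{H}}^2 = \int_A f_\mu \, d\mu = \mu(A)$, so $\mu(A)^2 / \norm{f_\mu}_{\mathcal{H}}^2 = \mu(A) = \mg{A}$ by Proposition~\ref{T:weight-measure-mag}, and the same scaling identity shows every scalar multiple attains as well; when $\mg{A} = \infty$ no admissible measure can produce an infinite ratio, so the supremum is unattained and, again by Proposition~\ref{T:weight-measure-mag}, no weight measure exists. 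I expect the main obstacle to be the bookkeeping in this last step --- verifying that the extremal proportionality constant is nonzero and matching ``equality in Cauchy--Schwarz'' precisely with ``scalar multiple of a weight measure'' --- rather than the two inequalities, which follow directly from Theorem~\ref{T:inf} and the finite-dimensional theory.
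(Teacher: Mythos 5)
Your argument is correct. Note first that the survey does not actually prove this proposition --- it is quoted from Theorems 2.3 and 2.4 of \cite{MeckPDM}, where the argument is run directly at the level of measures (using finite approximations and weak-$*$ compactness of the unit ball of $M(A)$), since the RKHS formulation of Theorem~\ref{T:inf} only appeared later in \cite{MeckMDC}. Your route instead pulls everything into $\mathcal{H}$: the identification of the denominator with $\norm{f_\mu}_{\mathcal{H}}^2$ is right (and correctly shows the denominator is automatically nonnegative, vanishing exactly when $f_\mu = 0$, so admissibility means $f_\mu \neq 0$); the upper bound via $\mu(A) = \inprod{h}{f_\mu}_{\mathcal{H}}$ and Cauchy--Schwarz is exactly the mechanism already used in the proof of Proposition~\ref{T:weight-measure-mag}; the lower bound via discrete measures supported on finite subsets, with $\norm{f_\mu}_{\mathcal{H}}^2 = w^\transp Z_B w = \mg{B}$, matches the definition \eqref{E:sup}; and the equality analysis in Cauchy--Schwarz (with $\lambda \neq 0$ forced by admissibility, and $h \neq 0$ since $\norm{h}_{\mathcal{H}}^2 = \mg{A} \ge 1$) correctly characterizes the extremizers as nonzero scalar multiples of weight measures. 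What your approach buys is a uniform treatment of the inequality, the attainment, and the infinite-magnitude case from a single Hilbert-space picture, at the cost of presupposing Theorem~\ref{T:inf}; the original proof is more self-contained but requires a separate compactness argument to handle attainment. The only point worth making explicit is that elements of $\mathcal{H}$ restrict to bounded (indeed continuous) functions on $A$ because $\sup_A \abs{f} \le \norm{f}_{\mathcal{H}}$ by \eqref{E:RKHS-bound}, which is what legitimizes both the boundedness of $f \mapsto \int_A f \, d\mu$ and the step $\int_A h \, d\mu = \inprod{h}{f_\mu}_{\mathcal{H}}$; the survey uses the same fact without comment in Proposition~\ref{T:weight-measure-mag}, so this is not a gap.
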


One positive result about the existence of weight measures is the
following.

\begin{prop}[{\cite[Lemma 2.8 and Corollary 2.10]{MeckPDM}}]
  \label{T:positively-weighted}
  Suppose $(A,d)$ is a compact positive definite space, and that each
  finite $A'\subseteq A$ possesses a weighting with positive
  components. Then $A$ possesses a positive weight measure.
\end{prop}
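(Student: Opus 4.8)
The plan is to realise the desired weight measure as a weak-$*$ limit of the discrete measures coming from the positive weightings of finite subspaces. Since $A$ is compact, for each $k$ I would choose a finite $(1/k)$-net $A_k \subseteq A$. By Proposition~\ref{T:finite-pd-operations} each $A_k$ is positive definite, so its similarity matrix is invertible and it carries a unique weighting $w^{(k)} \in \R^{A_k}$, which by hypothesis has strictly positive entries. I package this as the positive Borel measure $\mu_k = \sum_{a \in A_k} w^{(k)}_a\,\delta_a$ on $A$. By construction $\mu_k$ satisfies the weight-measure equation $\int_A e^{-d(a,b)}\,d\mu_k(b) = 1$ at every net point $a \in A_k$, and has total mass $\mu_k(A) = \mg{A_k}$.

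The crucial use of the positivity hypothesis is to bound these measures uniformly. Writing $D = \diam A < \infty$, for any net point $a \in A_k$ the weighting equation together with $e^{-d(a,b)} \ge e^{-D}$ gives $1 = \sum_{b} e^{-d(a,b)} w^{(k)}_b \ge e^{-D} \sum_b w^{(k)}_b$, where the inequality uses that every $w^{(k)}_b > 0$. Hence $\norm{\mu_k}_{TV} = \mu_k(A) = \mg{A_k} \le e^{D}$. In particular $\mg{A} = \sup_{A'} \mg{A'} \le e^{D} < \infty$, and all the $\mu_k$ lie in a fixed multiple of the unit ball of $M(A) = C(A)^*$.

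Since $A$ is a compact metric space, $C(A)$ is separable, so by the Banach--Alaoglu theorem this bounded sequence has a weak-$*$ convergent subsequence $\mu_{k_j} \rightharpoonup \mu$. Testing against nonnegative continuous functions shows $\mu \ge 0$, so $\mu$ is a positive finite measure, and it remains only to verify that $\mu$ is a weight measure. Fix $a \in A$. The function $e^{-d(a,\cdot)}$ is continuous, so $\int_A e^{-d(a,b)}\,d\mu(b) = \lim_j \int_A e^{-d(a,b)}\,d\mu_{k_j}(b)$. Choosing $a_j \in A_{k_j}$ with $d(a,a_j) \le 1/k_j$, the weighting equation at the net point $a_j$ gives $\int_A e^{-d(a_j,b)}\,d\mu_{k_j}(b) = 1$, while the $1$-Lipschitz estimate $\abs{e^{-d(a,b)} - e^{-d(a_j,b)}} \le d(a,a_j)$ combined with $\norm{\mu_{k_j}}_{TV} \le e^{D}$ bounds the difference of the two integrals by $e^{D}/k_j \to 0$. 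Hence $\int_A e^{-d(a,b)}\,d\mu(b) = 1$ for every $a \in A$, so $\mu$ is a positive weight measure; by Proposition~\ref{T:weight-measure-mag} this also recovers $\mg{A} = \mu(A)$.

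The step I expect to be the main obstacle is this last verification. The finite weightings only satisfy the defining equation at their own net points, so passing it to the limit at an arbitrary point $a \in A$ forces one to control the error introduced by replacing $a$ with a nearby net point $a_j$. This needs simultaneously that the covering radius of the nets shrinks to $0$ and that the total masses stay uniformly bounded --- which is precisely where compactness and the positivity of the weightings enter.
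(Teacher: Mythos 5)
Your argument is correct: the uniform bound $\mu_k(A)=\mg{A_k}\le e^{\diam A}$ is exactly where the positivity hypothesis is needed, Banach--Alaoglu applies since $C(A)$ is separable, and the passage of the weight-measure identity to the limit via the $1$-Lipschitz estimate $\abs{e^{-d(a,b)}-e^{-d(a_j,b)}}\le d(a,a_j)$ is sound (the limit $\mu$ cannot be zero, since its potential is identically $1$). However, this is a genuinely different route from the one the paper points to. The cited proof (Lemma~2.8, Proposition~2.9 and Corollary~2.10 of \cite{MeckPDM}, as flagged in the discussion after Proposition~\ref{T:md-cont}) goes through maximum diversity: one uses weak-$*$ compactness of $P(A)$ to produce a probability measure $\mu$ maximizing the diversity functional in \eqref{E:md}, and then uses the positive-weighting hypothesis on finite subsets in a variational argument to show that the potential $a\mapsto\int e^{-d(a,b)}\,d\mu(b)$ of the maximizer is constant on $A$, so that a scalar multiple of $\mu$ is a weight measure. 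You instead apply weak-$*$ compactness directly to the discrete weightings of a sequence of finer and finer nets. Your version is more elementary and self-contained --- it needs no variational machinery and yields the explicit bound $\mg{A}\le e^{\diam A}$ as a byproduct --- while the paper's route gives the extra structural information that the positive weight measure is (up to normalization) the diversity-maximizing measure, which is what makes the identity $\md{A}=\mg{A}$ for positively weighted spaces, used later in the paper, fall out of the same argument.
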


The hypothesis of Proposition \ref{T:positively-weighted} is
satisfied, for example, by all compact subsets of $\R$ and by all
compact ultrametric spaces (see Theorem \ref{T:R-finite-mag} below and
\cite[Proposition 2.4.18]{MMS}).  Since Proposition
\ref{T:positively-weighted} applies only to positive definite spaces,
it does not extend the scope of magnitude beyond that of the previous
section.  Nevertheless, the existence of a positive weight measure
makes it much easier to compute magnitude, and has other theoretical
consequences which will come up in the next section.

The following generalization of Proposition \ref{T:Speyer} gives
another large class of spaces which possess weight measures.

\begin{lemma}[{\cite[Theorem 1]{Willerton-homog}}]
  \label{T:homog-weight-measure}
  Let $A$ be a compact homogeneous metric space. Then $A$ possesses a
  weight measure, which is a scalar multiple of the unique
  isometry-invariant probability measure $\mu$ on $A$.  Furthermore,
  \[
  \mg{A} = \left(\int_A \int_A e^{-d(a,b)} \ d\mu(a) \
    d\mu(b)\right)^{-1}.
  \]
\end{lemma}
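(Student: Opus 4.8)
The plan is to lift Speyer's finite-space argument (Proposition \ref{T:Speyer}) to the compact setting: the counting that makes the relevant integral constant in the finite case is replaced by an averaging over an invariant measure. Let $G$ denote the isometry group of $A$. The first step is to produce and characterize the invariant probability measure $\mu$. Since $A$ is compact, the Arzel\`a--Ascoli theorem shows that $G$, equipped with the topology of uniform convergence, is a compact topological group acting continuously on $A$; let $\eta$ be its Haar probability measure. Fixing a base point $a_0 \in A$, the orbit map $g \mapsto g a_0$ is continuous and, by homogeneity, surjective, and I would define $\mu$ to be its pushforward of $\eta$, which is then a $G$-invariant Borel probability measure. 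For uniqueness, given any $G$-invariant probability measure $\mu'$ and any $f \in C(A)$, invariance of $\mu'$ followed by Fubini and invariance of $\eta$ give
\[
\int_A f \, d\mu' = \int_G \int_A f(g x) \, d\mu'(x) \, d\eta(g) = \int_A \Bigl( \int_G f(g x) \, d\eta(g) \Bigr) d\mu'(x);
\]
the inner average is a $G$-invariant function of $x$, hence constant since $G$ acts transitively, and equals $\int_A f \, d\mu$. Thus $\int_A f\,d\mu' = \int_A f\,d\mu$ for all $f \in C(A)$, so $\mu' = \mu$.

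The heart of the argument is then the observation that
\[
\phi(a) := \int_A e^{-d(a,b)} \, d\mu(b)
\]
is independent of $a$. Indeed, for any $g \in G$, the change of variables $b \mapsto g b$ together with the $G$-invariance of $\mu$ and the isometry identity $d(ga, gb) = d(a,b)$ yields $\phi(ga) = \phi(a)$; since $G$ acts transitively on $A$, the function $\phi$ is constant, say $\phi \equiv C$, with $C > 0$ because the integrand is strictly positive on the compact set $A \times A$. Integrating this constant against $\mu$ and using $\mu(A) = 1$ identifies its value:
\[
C = \int_A \phi(a) \, d\mu(a) = \int_A \int_A e^{-d(a,b)} \, d\mu(a) \, d\mu(b).
\]

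Finally, the measure $\nu := C^{-1} \mu$ satisfies $\int_A e^{-d(a,b)} \, d\nu(b) = C^{-1} \phi(a) = 1$ for every $a \in A$, so $\nu$ is a weight measure on $A$ and is a scalar multiple of $\mu$, as asserted. By the definition of magnitude via weight measures, $\mg{A} = \nu(A) = C^{-1} \mu(A) = C^{-1}$, which is exactly the stated formula. The main obstacle is the first step: one must know that the isometry group of a compact metric space is itself compact and hence carries a (bi-invariant) Haar measure, in order to construct $\mu$ and to prove its uniqueness. Once $\mu$ is available, the remainder is the short invariance-and-transitivity computation, which is precisely the continuous analogue of the finite homogeneous case.
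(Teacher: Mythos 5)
Your proposal is correct and is essentially the standard argument: the paper itself does not reprove this lemma but cites Willerton's Theorem 1, whose proof likewise averages over the (Haar measure of the) compact isometry group to obtain the invariant probability measure and then uses transitivity to show that $a \mapsto \int_A e^{-d(a,b)}\,d\mu(b)$ is a positive constant $C$, so that $C^{-1}\mu$ is a weight measure and $\mg{A} = C^{-1}$ by the weight-measure definition of magnitude. Your extra care in constructing $\mu$ via Arzel\`a--Ascoli and in proving its uniqueness is a welcome filling-in of details that the cited source takes for granted.
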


Using Lemma \ref{T:homog-weight-measure}, Willerton explicitly
computed the magnitudes of round spheres with the geodesic metric: for
$n$ even, the magnitude of the $n$-sphere with radius $R$ is
\[
\frac{2}{1 + e^{-\pi R}}
\left[ 1 + \left( \frac{R}{1} \right)^2 \right]
\left[ 1 + \left( \frac{R}{3} \right)^2 \right]
\cdots
\left[ 1 + \left( \frac{R}{n - 1} \right)^2 \right],
\]
and there is a similar formula for odd $n$; see \cite[Theorem
7]{Willerton-homog}.

Lemma \ref{T:homog-weight-measure} is particularly useful in analyzing
the magnitude function of a homogeneous space $A$, since it implies
that $tA$ possesses a weight measure for every $t > 0$, which is
moreover independent of $t$ (up to normalization). In the particular
case of a homogeneous Riemannian manifold, Willerton proved the
following asymptotic results. (We note that most homogeneous manifolds
are \emph{not} of negative type, so that $tA$ need not be positive
definite; see \cite{Kokkendorff}.)

\begin{thm}[{\cite[Theorem 11]{Willerton-homog}}]
  \label{T:homog-manifold}
  Suppose that $(M,d)$ is an $n$-dimensional homogeneous Riemannian
  manifold equipped with its geodesic distance $d$. Then
  \[
  \mg{tM} = \frac{1}{n! \omega_n} \left(\vol(M) t^n + \frac{n+1}{6}
    \operatorname{tsc}(M) t^{n-2} + O(t^{n-4})\right)
    \quad \text{as } t \to \infty,
  \]
  where $\vol$ denotes Riemannian volume, $\operatorname{tsc}$ denotes
  total scalar curvature, and $\omega_n$ is the volume of the
  $n$-dimensional unit ball in $\ell_2^n$.

  In particular, if $M$ is a homogeneous Riemannian surface, then
  \[
  \mg{tM} = \frac{\operatorname{area} (M)}{2\pi} t^2 + \chi(M) +
  O(t^{-2}) \quad \text{as } t \to \infty,
  \]
  where $\chi(M)$ denotes the Euler characteristic of $M$.
\end{thm}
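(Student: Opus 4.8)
The plan is to build the asymptotics directly from the homogeneous weight-measure formula of Lemma~\ref{T:homog-weight-measure}, turning the magnitude into a single radial integral and then running a Laplace-type (Watson) expansion in geodesic polar coordinates. First I would apply Lemma~\ref{T:homog-weight-measure} to the scaled space $tM = (M, td)$, which is again compact homogeneous; its unique isometry-invariant probability measure is the normalized Riemannian volume $\mu = \vol(M)^{-1}\vol$, so
\[
\mg{tM} = \left(\int_M\int_M e^{-t\,d(a,b)}\,d\mu(a)\,d\mu(b)\right)^{-1}.
\]
By homogeneity the inner integral is independent of $a$, so fixing any basepoint $p$ collapses this to
\[
\mg{tM} = \left(\frac{1}{\vol(M)}\int_M e^{-t\,d(p,b)}\,d\vol(b)\right)^{-1}.
\]
The entire problem is thus to expand $I(t) = \int_M e^{-t\,r(b)}\,d\vol(b)$, with $r(b) = d(p,b)$, as $t\to\infty$.

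Next I would localize. Let $\rho > 0$ be the injectivity radius, which is positive since $M$ is compact. On $M\setminus B(p,\rho)$ one has $r\ge\rho$, so that contribution to $I(t)$ is $O(e^{-t\rho})$, negligible against any power of $t$. On $B(p,\rho)$ I would pass to geodesic polar coordinates $(r,\theta)\in(0,\rho)\times \mathbb{S}^{n-1}$, in which the volume density has the standard expansion
\[
d\vol = J(r,\theta)\,dr\,d\theta, \qquad J(r,\theta) = r^{n-1}\left(1 - \tfrac{1}{6}\,\mathrm{Ric}(\theta,\theta)\,r^2 + O(r^3)\right),
\]
where $\mathrm{Ric}$ is the Ricci curvature; homogeneity makes this identical at every basepoint, which is exactly what renders the expansion uniform over $M$.

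Integrating over the sphere annihilates the odd-order terms by symmetry, and $\int_{\mathbb{S}^{n-1}}\mathrm{Ric}(\theta,\theta)\,d\theta = \tfrac{\vol(\mathbb{S}^{n-1})}{n}\,s$, where $s$ is the (constant) scalar curvature. Applying the elementary radial asymptotics $\int_0^\rho e^{-tr}r^m\,dr = m!\,t^{-(m+1)} + O(e^{-t\rho/2})$ term by term and using $\vol(\mathbb{S}^{n-1}) = n\omega_n$, I would obtain
\[
I(t) = n!\,\omega_n\,t^{-n} - \frac{(n+1)!\,\omega_n\,s}{6}\,t^{-(n+2)} + O(t^{-(n+4)}).
\]
Dividing by $\vol(M)$, inverting via $(1 - \tfrac{(n+1)s}{6}t^{-2}+\cdots)^{-1} = 1 + \tfrac{(n+1)s}{6}t^{-2}+\cdots$, and recognizing $s\,\vol(M) = \operatorname{tsc}(M)$ then yields exactly the claimed expansion.

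The main obstacle is making this term-by-term Laplace expansion rigorous: I need the density expansion to hold with a remainder uniform in $\theta$, and I must confirm that integrating an $O(r^{k})$ remainder against $e^{-tr}$ really produces $O(t^{-(n+k+1)})$ rather than something larger. Homogeneity carries most of this burden, since it forces every curvature quantity to be constant and the polar expansion to be basepoint-independent, eliminating nonuniformity across $M$. Finally, for the surface case I would specialize $n=2$: then $n!\,\omega_n = 2\pi$ and $\tfrac{n+1}{6} = \tfrac12$, while $\operatorname{tsc}(M) = \int_M 2K\,d\vol = 4\pi\chi(M)$ by Gauss--Bonnet (scalar curvature being twice the Gauss curvature $K$), so the constant term collapses precisely to $\chi(M)$.
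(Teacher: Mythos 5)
Your proposal is correct and follows essentially the same route as the proof the paper defers to (Willerton's, via Lemma~\ref{T:homog-weight-measure}): reduce $\mg{tM}$ to the reciprocal of $\vol(M)^{-1}\int_M e^{-t\,d(p,b)}\,d\vol(b)$ by homogeneity, then expand that integral using the curvature expansion of the geodesic volume density and a Laplace/Watson argument. The constants all check out, including the Gauss--Bonnet specialization in the surface case.
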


Theorem \ref{T:homog-manifold} shows in particular that the magnitude
function of a homogeneous Riemannian manifold determines both its
volume and its total scalar curvature.

We note that most Riemannian manifolds are neither homogeneous nor
positive definite, and it is so far not clear how to define their
magnitude.

\subsection{Maximum diversity}
\label{S:diversity}

Proposition \ref{T:compact-sup} suggests considering, for a compact
metric space $(A,d)$, the quantity
\begin{equation}
  \label{E:md}
  \begin{split}
    \md{A} := & \sup \Set{\frac{\mu(A)^2}{\int_A \int_A e^{-d(a,b)} \
        d\mu(a) \ d\mu(b)}}{ \mu \in M_+(A), \mu \neq 0} \\
    = & \sup_{\mu \in P(A)} \left(\int_A \int_A e^{-d(a,b)} \
      d\mu(a) \ d\mu(b)\right)^{-1},
  \end{split}
\end{equation}
where $M_+(A)$ is the space of finite positive Borel measures on $A$,
and $P(A)$ is the space of Borel probability measures on $A$.  We
refer to $\md{A}$ as the \dfn{maximum diversity} of $A$, for reasons
that will be described shortly.  Maximum diversity lacks the
category-theoretic motivation of magnitude, but it turns out to have
its own interesting interpretations, and to be both intimately related
to magnitude and easier to analyze in certain respects.

Regarding interpretation, suppose that $A$ is finite, the points of
$A$ represent species in some ecosystem, and that
$e^{-d(a,b)} \in (0,1]$ represents the ``similarity'' of two species
$a,b \in A$. If $\mu \in P(A)$ gives the relative abundances of
species, then
\[
\left(\int_A \int_A e^{-d(a,b)} \ d\mu(a) \ d\mu(b)\right)^{-1}
\]
gives a way of quantifying the ``diversity'' of the ecosystem which is
sensitive to both the abundances of the species and the similarities
between them; see \cite{LeCo} for extensive discussion of
a much larger family of diversities that this fits into. It is this
interpretation that motivates the name ``maximum diversity''.

There are multiple connections between magnitude and maximum
diversity.  The most obvious is that, by Proposition
\ref{T:compact-sup}, $\md{A} \le \mg{A}$ for any compact positive
definite space $A$.  Moreover, Proposition \ref{T:compact-sup} implies
that $\md{A} = \mg{A}$ if $A$ is positive definite and possesses a
positive weight measure; Proposition \ref{T:positively-weighted} and
Lemma \ref{T:homog-weight-measure} indicate some families of such
spaces.  Finally, as we will see in Corollary
\ref{T:mag-md-equivalence} below, if $A \subseteq \ell_2^n$, then the
inequality $\md{A} \le \mg{A}$ can be reversed, up to a
(dimension-dependent) multiplicative constant.  We will see
applications of all these connections below.

A more subtle connection between maximum diversity and magnitude,
which we will not discuss here, is proved in the main result of
\cite{METAMB,LeMe}.

We now move on to ways in which maximum diversity is better behaved
than magnitude.  One is that the supremum in \eqref{E:md} is always
achieved, unlike the one in Proposition \ref{T:compact-sup}. This is a
consequence of the compactness of $P(A)$ in the weak-$*$ topology; see
\cite[Proposition 2.9]{MeckPDM} (this fact is used in the proof of
Proposition \ref{T:positively-weighted} above). Another is the
following improvement, for maximum diversity, of Proposition
\ref{T:semicont}.

\begin{prop}[{\cite[Proposition 2.11]{MeckPDM}}]
  \label{T:md-cont}
  The maximum diversity $\md{A}$ is continuous as a function of
  $A$, on the class of compact metric spaces equipped with the
  Gromov--Hausdorff topology. 
\end{prop}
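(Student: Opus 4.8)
The plan is to pass to the reciprocal quantity
\[
E(\mu) := \int_A \int_A e^{-d(a,b)}\,d\mu(a)\,d\mu(b), \qquad E^*(A) := \inf_{\mu \in P(A)} E(\mu),
\]
so that the second expression in \eqref{E:md} reads $\md{A} = 1/E^*(A)$. Since $e^{-d(a,b)} \le 1$ with equality on the diagonal, $E^*(A) \le 1$; and $E^*(A) > 0$ by a covering argument, in which one partitions $A$ into finitely many Borel pieces of diameter less than $\eta$ and applies the Cauchy--Schwarz inequality to bound $E(\mu) \ge e^{-\eta}/N$ uniformly in $\mu$, where $N$ is the number of pieces. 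In particular $\md{A} = 1/E^*(A)$ is finite. As $t \mapsto 1/t$ is continuous on $(0,\infty)$, it then suffices to prove that $E^*$ is continuous on the Gromov--Hausdorff space; continuity of $\md{\,\cdot\,}$ follows at each fixed $A$ precisely because $E^*(A) > 0$.

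The crux is a transport estimate: \emph{if $d_{GH}(A,B) < \delta$, then for every $\mu \in P(A)$ there is $\nu \in P(B)$ with $E(\nu) \le E(\mu) + 2\delta$.} Granting this and taking the infimum over $\mu$ gives $E^*(B) \le E^*(A) + 2\delta$, and the symmetric construction yields $\abs{E^*(A) - E^*(B)} \le 2\,d_{GH}(A,B)$, so $E^*$ is $2$-Lipschitz and in particular continuous. To build $\nu$, I would recall that $d_{GH}(A,B) < \delta$ furnishes a (not necessarily measurable) map $f : A \to B$ with $\abs{d_B(f(a),f(a')) - d_A(a,a')} < 2\delta$ for all $a,a' \in A$, exactly as in the proof of Proposition \ref{T:semicont}.

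The main obstacle is that $f$ need not be measurable, so one cannot simply set $\nu = f_*\mu$; the remedy is to discretize $\mu$ before transporting. Given $\eta > 0$, I partition $A$ into Borel sets $P_1,\dots,P_N$ of diameter less than $\eta$, choose $a_i \in P_i$, and replace $\mu$ by the atomic measure $\hat\mu = \sum_i \mu(P_i)\,\delta_{a_i}$; using $\abs{e^{-s} - e^{-t}} \le \abs{s-t}$ together with the triangle inequality, $\abs{E(\mu) - E(\hat\mu)} \le 2\eta$. Now $f$ need only be applied to the finite set $\{a_i\}$: setting $\nu = \sum_i \mu(P_i)\,\delta_{f(a_i)} \in P(B)$, the distortion bound gives $\abs{E(\nu) - E(\hat\mu)} \le 2\delta$, whence $E(\nu) \le E(\mu) + 2\eta + 2\delta$. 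Letting $\eta \to 0$ proves the transport estimate. I note finally that no positive-definiteness enters anywhere: the estimates use only that $e^{-d}$ is bounded by $1$ and $1$-Lipschitz in the distance, so the argument applies to all compact metric spaces, as required.
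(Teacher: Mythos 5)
Your proof is correct. The paper itself does not prove this proposition (it is quoted from \cite{MeckPDM}), so the natural comparison is with the in-paper proof of Proposition \ref{T:semicont}, which your argument closely parallels: both use the map $f\colon A \to B$ of distortion $<2\delta$ furnished by $d_{GH}(A,B)<\delta$ to transport an (approximately) optimal object from $A$ to $B$. The two ingredients you add are exactly the right ones: the discretization $\mu \mapsto \hat\mu$ deals with the non-measurability of $f$ (the analogous issue does not arise in Proposition \ref{T:semicont} because weightings there are already finitely supported), and the uniform lower bound $E^*(A) \ge e^{-\eta}/N > 0$ is what lets you pass from Lipschitz continuity of the energy $E^*$ to genuine two-sided continuity of its reciprocal $\md{A}$ --- this is precisely the feature that magnitude lacks, which is why Proposition \ref{T:semicont} gives only semicontinuity while maximum diversity is continuous. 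All the individual estimates ($\abs{e^{-s}-e^{-t}} \le \abs{s-t}$, the Cauchy--Schwarz step giving $\sum_i \mu(P_i)^2 \ge 1/N$, and the existence of a finite Borel partition of small diameter by compactness) check out, and you are right that positive definiteness plays no role.
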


\begin{cor}[{\cite[Corollary 2.12]{MeckPDM}}]
  \label{T:pw-mag-cont}
  The magnitude $\mg{A}$ is continuous as a function of $A$, on the
  class of compact positive definite metric spaces which possess
  positive weight measures, equipped with the Gromov--Hausdorff
  topology.

  In particular, magnitude is continuous on the class of compact
  subsets of $\R$, and on the class of compact ultrametric spaces.
\end{cor}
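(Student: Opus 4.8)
The plan is to deduce the continuity of magnitude from the continuity of maximum diversity, by showing that the two invariants coincide on the class in question. Write $\mathcal{W}$ for the class of compact positive definite metric spaces possessing a positive weight measure. The central claim is that $\mg{A} = \md{A}$ for every $A \in \mathcal{W}$; granting this, the corollary follows immediately, since $\md{\,\cdot\,}$ is continuous on the whole class of compact metric spaces by Proposition \ref{T:md-cont}, and a continuous function remains continuous when restricted to the subspace $\mathcal{W}$ with the Gromov--Hausdorff topology.

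To establish $\mg{A} = \md{A}$ on $\mathcal{W}$, I would compare the two variational formulas directly. The maximum diversity $\md{A}$ is the supremum in \eqref{E:md} over positive measures, while Proposition \ref{T:compact-sup} expresses $\mg{A}$ as the analogous supremum over all finite signed measures; since $M_+(A) \subseteq M(A)$, this gives $\md{A} \le \mg{A}$ with no hypotheses. For the reverse inequality I use the attainment clause of Proposition \ref{T:compact-sup}: because $A$ possesses a weight measure, the supremum defining $\mg{A}$ is attained, and precisely by scalar multiples of weight measures. By hypothesis $A$ has a \emph{positive} weight measure $\mu$; the quotient in \eqref{E:md} is invariant under positive rescaling, so $\mu$ itself is an admissible competitor in the supremum defining $\md{A}$ and achieves the value $\mg{A}$. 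Hence $\md{A} \ge \mg{A}$, and the two are equal. This identity is already asserted in the discussion following Proposition \ref{T:compact-sup}; the point is simply to record the argument. The only thing to keep straight, rather than a genuine obstacle, is the bookkeeping between the two measure classes: it is the positivity of the weight measure, not merely its existence as a signed measure, that allows the optimizer for $\mg{A}$ to serve as a competitor in the supremum for $\md{A}$. Once the identity holds, continuity on $\mathcal{W}$ is automatic from Proposition \ref{T:md-cont}.

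Finally, for the two special cases I would verify membership in $\mathcal{W}$. A compact subset $A \subseteq \R$ is positive definite because $\R = \ell_2^1$ is of negative type by Theorem \ref{T:pd-examples}; moreover every finite subset of $\R$ carries a positive weighting (Theorem \ref{T:R-finite-mag}), so by Proposition \ref{T:positively-weighted} the space $A$ possesses a positive weight measure, whence $A \in \mathcal{W}$. Likewise a compact ultrametric space is positive definite by Theorem \ref{T:pd-examples}, and its finite subspaces are positively weighted, so Proposition \ref{T:positively-weighted} again supplies a positive weight measure and places the space in $\mathcal{W}$. In both cases the asserted continuity follows from the general statement just proved.
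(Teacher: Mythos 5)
Your proposal is correct and follows essentially the route the paper intends: the corollary is presented as a consequence of Proposition \ref{T:md-cont} via the identity $\mg{A}=\md{A}$ for compact positive definite spaces with positive weight measures, which the paper itself derives from Proposition \ref{T:compact-sup} exactly as you do, and the two special cases are handled by Proposition \ref{T:positively-weighted} together with Theorem \ref{T:R-finite-mag} and the positivity of weightings on finite ultrametric spaces. No gaps.
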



The next result shows how the asymptotic behavior of $\mg{tA}_+$ is
relatively easy to analyze.  Recall that the \dfn{covering number}
$N(A,\eps)$ is the minimum number of $\eps$-balls required to
cover $A$, and that the \dfn{Minkowski dimension} of $A$ may be
defined as
\begin{equation}
  \label{E:boxdim}
  \boxdim A := \lim_{\eps \to 0^+} \frac{N(A, \eps)}{\log(1/\eps)}
\end{equation}
whenever this limit exists. The idea of the proof of Proposition
\ref{T:ddim} below is simply that when $t$ is large and $\eps$ is
small, the supremum over $P(A)$ defining $\mg{tA}_+$ is approximately
attained by a measure uniformly supported on the centers of a maximal
family of disjoint $\eps$-balls in $A$.

\begin{prop}[{\cite[Theorem 7.1]{MeckMDC}}]
  \label{T:ddim}
  If $A$ is a compact metric space, then
  \begin{equation}
    \label{E:ddim}
    \lim_{t \to \infty} \frac{\log \mg{tA}_+}{\log t} = \boxdim A.
  \end{equation}
\end{prop}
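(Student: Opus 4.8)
The plan is to work directly with the maximum-diversity formula \eqref{E:md}, which gives
\[
\mg{tA}_+ = \md{tA} = \sup_{\mu \in P(A)} \left( \int_A \int_A e^{-t\,d(a,b)} \ d\mu(a) \ d\mu(b) \right)^{-1},
\]
and to sandwich this quantity between two expressions built from covering and packing numbers at a scale $\eps$ that is later coupled to $t$. For the \emph{lower} bound I would fix $\eps > 0$, choose a maximal $\eps$-separated subset $\{x_1,\dots,x_N\} \subseteq A$ (so $d(x_i,x_j) \ge \eps$ for $i \neq j$), and test \eqref{E:md} against the uniform measure $\mu = \frac{1}{N}\sum_i \delta_{x_i}$. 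Splitting the double sum into its diagonal and off-diagonal parts gives $\int_A \int_A e^{-t\,d}\,d\mu\,d\mu \le \frac{1}{N} + e^{-t\eps}$, and hence
\[
\mg{tA}_+ \ge \frac{N}{1 + N e^{-t\eps}}.
\]

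For the \emph{upper} bound I would cover $A$ by $M = N(A,\eps)$ balls of radius $\eps$ and disjointify them into Borel pieces $E_1,\dots,E_M$ with $\diam E_i \le 2\eps$. For any $\mu \in P(A)$, restricting the integral to the diagonal blocks and using $e^{-t\,d(a,b)} \ge e^{-2t\eps}$ on each $E_i \times E_i$, together with the Cauchy--Schwarz estimate $\sum_i \mu(E_i)^2 \ge 1/M$, yields $\int_A \int_A e^{-t\,d}\,d\mu\,d\mu \ge e^{-2t\eps}/M$, so taking the supremum over $\mu$,
\[
\mg{tA}_+ \le N(A,\eps)\, e^{2t\eps}.
\]
Since packing and covering numbers at comparable scales have the same logarithmic growth, both $\uboxdim A$ and $\lboxdim A$ may be read off either quantity.

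With this two-sided bound in hand, the conclusion follows by taking logarithms, dividing by $\log t$, and coupling $\eps$ to $t$. For the $\limsup$ estimate I would set $\eps = 1/t$: then $e^{2t\eps}$ contributes only a constant factor, $\log(1/\eps) = \log t$, and $\frac{\log \mg{tA}_+}{\log t} \le \frac{\log N(A,1/t) + 2}{\log t}$, whose $\limsup$ as $t \to \infty$ is exactly $\uboxdim A$. For the $\liminf$ estimate I would instead take $\eps = \eps(t) = s\,(\log t)/t$ with a fixed $s > \boxdim A$, so that the off-diagonal factor $N e^{-t\eps}$ stays bounded (indeed tends to $0$, being of order $t^{\boxdim A - s}$ up to logarithmic corrections) while $\log(1/\eps(t)) \sim \log t$; the lower bound then gives $\frac{\log \mg{tA}_+}{\log t} \ge \frac{\log N - O(1)}{\log t} \to \lboxdim A$. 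When the Minkowski dimension exists, $\uboxdim A = \lboxdim A = \boxdim A$ pinches the ratio between these two values to the common limit.

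The two inequalities are routine; the one genuinely delicate point is the coupling of the two scales. The error factors $N e^{-t\eps}$ (lower bound) and $e^{2t\eps}$ (upper bound) pull $\eps$ in opposite directions, so one must check that a single admissible choice $\eps = \eps(t)$ simultaneously keeps these factors logarithmically negligible against $\log t$ \emph{and} forces $\log(1/\eps) \sim \log t$, so that the exponent $\boxdim A$ of the covering number survives intact under the $\log t$ normalization. This balancing --- essentially the quantitative content of the heuristic that the optimal measure concentrates on the centers of a maximal $\eps$-packing --- is where I expect the real work to lie.
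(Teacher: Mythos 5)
Your proposal is correct and follows exactly the route the paper indicates (and which is carried out in \cite[Theorem~7.1]{MeckMDC}): the supremum in \eqref{E:md} is tested against the uniform measure on a maximal $\eps$-separated set for the lower bound, matched by a covering-plus-Cauchy--Schwarz upper bound, with the scale $\eps$ coupled to $t$ so that the error factors are logarithmically negligible. The only adjustment worth noting is that when $\boxdim A$ does not exist you should take $s > \uboxdim A$ in the lower-bound coupling, and run both bounds along suitable subsequences $t_k$ to get the full statement that the $\liminf$ and $\limsup$ of the left-hand side of \eqref{E:ddim} equal $\lboxdim A$ and $\uboxdim A$ respectively.
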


Proposition \ref{T:ddim} should be interpreted as saying that the
limit on the left hand side of \eqref{E:ddim} exists if and only if
$\boxdim A$ exists.  Moreover, if the limit is replaced with a lim inf
or lim sup, the left hand side of \eqref{E:ddim} is equal to the
so-called lower or upper Minkowski dimension of $A$, respectively,
defined by modifying \eqref{E:boxdim} in the same way.

Since $\md{A} \le \mg{A}$ for any compact positive definite space,
Proposition \ref{T:ddim} gives a lower bound for the growth rate of
the magnitude function for a compact space of negative type. Moreover,
in Euclidean space $\ell_2^n$, Proposition 3.12 and the rough
equivalence of magnitude and maximum diversity mentioned above will be
used to show that Minkowski dimension can be recovered from magnitude;
see Theorem \ref{T:mdim} below.  (Proposition 7.5 of \cite{MeckMDC}
proves the same fact for compact homogeneous metric spaces, using
Lemma \ref{T:homog-weight-measure} above.)

\section{Magnitude in normed spaces}
\label{S:norms}

In this section we will specialize magnitude to compact subsets of
finite-dimensional vector spaces with translation-invariant metrics.
It is in these settings that we find the strongest connections between
magnitude and geometry.  In section \ref{S:R}, we find a quite
complete description of the magnitude of an arbitrary compact set
$A \subseteq \R$; in particular, $\mg{A}$ depends only on the Lebesgue
measure of $A$ and the sizes of the ``gaps'' in $A$
(Corollary~\ref{T:R-mag-gaps}).  In section \ref{S:1-norm}, we show
that in $\ell_1^n$, magnitude can be used to recover $\ell_1$
analogues of the classical intrinsic volumes of a convex body
(Theorem~\ref{thm:ell1-cvx}).  In section \ref{S:Fourier}, we apply
Fourier analysis to the study of magnitude, when $\R^n$ is equipped
with a norm (or more generally, a $p$-norm) which makes it a positive
definite metric space. In particular, we find that magnitude is
continuous on convex bodies in such spaces (Theorem
\ref{T:star-continuity}). Finally, in section \ref{S:Euclidean}, we
specialize these tools to the most familiar normed space, the
Euclidean space $\ell_2^n$.  In that setting the Fourier-analytic
perspective of section \ref{S:Fourier} uncovers connections with
partial differential equations and potential theory. Among other
results, we will see that in Euclidean space, magnitude knows about
volume (Theorem~\ref{T:volume}) and Minkowski dimension
(Theorem~\ref{T:mdim}), although there are frustratingly few compact
sets in $\ell_2^n$ whose exact magnitudes are known (see
Theorem~\ref{T:l2-balls}).

Corollary \ref{T:R-mag-gaps} and the material of section
\ref{S:1-norm} are new.  Most of the results of section
\ref{S:Fourier} were previously proved for Euclidean space, but are
new in the generality discussed here.

\subsection{Magnitude in $\R$}
\label{S:R}

In the real line $\R$, magnitude can be analyzed in great detail
thanks to the order structure underlying the metric structure.
Namely, if $a < b < c$, then $Z(a,c) = Z(a,b) Z(b,c)$, where we recall
that $Z(a,b) = e^{-d(a,b)}$.  This simple fact lies behind the proof
of the next result.

\begin{thm}[{\cite[Theorem 4]{LeWi} and \cite[Proposition
    2.4.13]{MMS}}]
  \label{T:R-finite-mag}
  Given real numbers $a_1 < a_2 < \dots < a_N$, the weighting $w$ of $A =
  \{a_1, \dots, a_N\}$ is given by 
  \[
  w_{a_i} = \frac{1}{2} \left( \tanh \frac{a_i - a_{i-1}}{2} + \tanh
    \frac{a_{i+1} - a_i}{2} \right)
  \]
  for $2 \le i \le N-1$, and 
  \[
  w_{a_1} = \frac{1}{2} \left( 1 + \tanh \frac{a_2 - a_1}{2}\right),
  \qquad
  w_{a_N} = \frac{1}{2} \left( 1 + \tanh \frac{a_N - a_{N-1}}{2} \right).
  \]
  Consequently,
  \[
  \mg{A} = 1 + \sum_{i=2}^{N} \tanh \frac{a_i - a_{i-1}}{2}.  
  \]
\end{thm}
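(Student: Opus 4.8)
The plan is to verify directly that the vector $w$ displayed in the statement is a weighting, i.e.\ that $Z_A w = e$, and then to invoke uniqueness to conclude that it is \emph{the} weighting. Since $\R = \ell_2^1$ is of negative type by Theorem~\ref{T:pd-examples}(1), the matrix $Z_A$ is positive definite, hence invertible, so (as noted in Section~\ref{S:matrices}) the weighting is unique and it suffices to check the $N$ scalar equations $\sum_j Z(a_i,a_j)w_{a_j}=1$. Two facts I would isolate first are the multiplicative structure $Z(a_i,a_j)=\prod_{k=\min(i,j)}^{\max(i,j)-1} r_k$ with $r_k:=e^{-(a_{k+1}-a_k)}\in(0,1)$ — an immediate consequence of $d(a_i,a_j)=\sum_k (a_{k+1}-a_k)$, that is, of $Z(a,c)=Z(a,b)Z(b,c)$ for $a<b<c$ — and the elementary identity $\tanh\tfrac{\ell}{2}=\tfrac{1-e^{-\ell}}{1+e^{-\ell}}$. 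The latter lets me rewrite each $w_{a_i}$ as a rational function of the $r_k$: explicitly $w_{a_i}=\tfrac{1}{1+r_{i-1}}+\tfrac{1}{1+r_i}-1$ in the interior, while $w_{a_1}=\tfrac{1}{1+r_1}$ and $w_{a_N}=\tfrac{1}{1+r_{N-1}}$.

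For the main computation I would split the $i$-th equation using the order structure on $\R$. Writing $P_i:=\sum_{j\le i}e^{-(a_i-a_j)}w_{a_j}$ and $Q_i:=\sum_{j\ge i}e^{-(a_j-a_i)}w_{a_j}$, the left-hand side of the $i$-th equation is $P_i+Q_i-w_{a_i}$, and the multiplicative structure yields the one-step recursions $P_i=r_{i-1}P_{i-1}+w_{a_i}$ and $Q_i=r_iQ_{i+1}+w_{a_i}$. The heart of the argument is an induction showing $P_i=\tfrac{1}{1+r_i}$ for $1\le i\le N-1$ and, symmetrically, $Q_i=\tfrac{1}{1+r_{i-1}}$ for $2\le i\le N$; the inductive step is the one-line simplification $\tfrac{r_{i-1}}{1+r_{i-1}}+\tfrac12\tanh\tfrac{a_i-a_{i-1}}{2}=\tfrac12$. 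Substituting these back gives $P_i+Q_i-w_{a_i}=\tfrac{1}{1+r_{i-1}}+\tfrac{1}{1+r_i}-w_{a_i}=1$ for interior $i$, while the two boundary rows reduce to the one-step checks $P_N=1$ and $Q_1=1$. This bookkeeping — keeping the boundary terms straight and matching which gap $r_k$ attaches to which endpoint — is where I expect the only real friction, though it is entirely mechanical.

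Finally, the magnitude formula falls out by summing the weights: $\mg{A}=\sum_{i=1}^N w_{a_i}$, where each interior contribution $\tanh\tfrac{a_{k+1}-a_k}{2}$ (for $1\le k\le N-1$) appears exactly twice — once as the right-hand gap of $a_k$ and once as the left-hand gap of $a_{k+1}$, each carrying a factor $\tfrac12$ — while the two boundary terms together contribute $\tfrac12+\tfrac12=1$. Collecting terms yields $\mg{A}=1+\sum_{i=2}^N\tanh\tfrac{a_i-a_{i-1}}{2}$, as claimed. As an alternative to the direct verification, one could instead exhibit the tridiagonal inverse $Z_A^{-1}$ (with off-diagonal entries $-r_i/(1-r_i^2)$ and the corresponding diagonal entries) and read off $w=Z_A^{-1}e$ as its row sums, together with $\mg{A}=\sum_{a,b}Z_A^{-1}(a,b)$ via \eqref{E:inverse-sum}; this is equivalent but merely relocates the same telescoping into the verification that $Z_A Z_A^{-1}=I$.
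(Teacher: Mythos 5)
Your proposal is correct, and it is essentially the argument the paper points to: the paper gives no proof of Theorem~\ref{T:R-finite-mag} itself (it cites \cite{LeWi} and \cite{MMS}), but it explicitly flags the multiplicative identity $Z(a,c)=Z(a,b)Z(b,c)$ for $a<b<c$ as the fact ``behind the proof,'' and your direct verification of $Z_A w = e$ via the telescoping recursions for $P_i$ and $Q_i$, combined with uniqueness of the weighting from positive definiteness, is exactly the standard route built on that identity. The algebra checks out (in particular $\tfrac12\tanh\tfrac{\ell}{2} = \tfrac{1}{1+e^{-\ell}}-\tfrac12$, so $w_{a_i}=\tfrac{1}{1+r_{i-1}}+\tfrac{1}{1+r_i}-1$ and the boundary rows reduce to $P_N=Q_1=1$), and the final summation is handled correctly.
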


Theorem \ref{T:R-finite-mag}, together with Proposition
\ref{T:finite-approximation}, was used to give the first derivation of
the magnitude of an interval; see \cite[Theorem 7]{LeWi} and
\cite[Theorem 3.2.2]{MMS}.

As mentioned above, by Proposition \ref{T:positively-weighted},
Theorem \ref{T:R-finite-mag} implies that every compact subset of $\R$
possesses a weight measure.  Furthermore, as noted in Corollary
\ref{T:pw-mag-cont}, this implies that magnitude on $\R$ is continuous
with respect to the Gromov--Hausdorff topology.

The last part of the following corollary appears, with additional
technical assumptions, as \cite[Lemma 3]{Willerton-homog}.

\begin{cor}
  \label{T:R-union-mag}
  Suppose that $A, B \subseteq \R$ are compact with $a = \sup A \le
  \inf B = b$. Then
  \[
  \mg{A \cup B} = \mg{A} + \mg{B} - 1 + \tanh \frac{b - a}{2}. 
  \]
  Consequently, if $C \subseteq \R$ is compact and $[a,b] \subseteq
  C$, then
  \[
  \mg{C \setminus (a,b)} = \mg{C} - \frac{b-a}{2} + \tanh
  \frac{b-a}{2}.
  \]
\end{cor}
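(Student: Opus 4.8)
The plan is to establish the first formula and then derive the second as an immediate special case.

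For the first formula, I would use the finite approximation strategy guaranteed by Proposition~\ref{T:finite-approximation}. Since $A,B\subseteq\R$ are compact with $\sup A = a \le b = \inf B$, I would choose finite subsets $A_k\subseteq A$ and $B_k\subseteq B$ with $A_k\to A$ and $B_k\to B$ in the Hausdorff topology, arranged so that $a\in A_k$ and $b\in B_k$ for every $k$ (one may always adjoin these two points without affecting Hausdorff convergence). Then $A_k\cup B_k\to A\cup B$ as well. Writing $A_k=\{a_1<\dots<a_m\}$ with $a_m=a$ and $B_k=\{b_1<\dots<b_n\}$ with $b_1=b$, the union $A_k\cup B_k$ is just the concatenated increasing sequence $a_1<\dots<a_m=a\le b=b_1<\dots<b_n$. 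The key observation is that Theorem~\ref{T:R-finite-mag} expresses the magnitude of any finite subset of $\R$ as a telescoping-free sum $1+\sum \tanh\tfrac{\text{gap}}{2}$ over consecutive gaps. Applying this to $A_k$, to $B_k$, and to $A_k\cup B_k$, and comparing the three expressions, the gaps internal to $A_k$ and internal to $B_k$ match up exactly, while the union introduces exactly one new consecutive gap, namely the one from $a$ to $b$. Thus
\[
\mg{A_k\cup B_k} = \mg{A_k} + \mg{B_k} - 1 + \tanh\frac{b-a}{2},
\]
where the $-1$ corrects for double-counting the leading constant $1$. Letting $k\to\infty$ and invoking Proposition~\ref{T:finite-approximation} (note $\R$ is positive definite, so all these compacta have well-defined magnitude) yields the first claimed identity.

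For the second formula, I would apply the first with a judicious decomposition of $C$. Given $[a,b]\subseteq C$, set $A' = C\cap(-\infty,a]$ and $B' = C\cap[b,\infty)$, so that $C\setminus(a,b) = A'\cup B'$ with $\sup A' = a\le b = \inf B'$; the first formula gives $\mg{C\setminus(a,b)} = \mg{A'}+\mg{B'}-1+\tanh\tfrac{b-a}{2}$. Separately, decomposing $C$ itself across the interval as $C = A'\cup B'$ where now the ``gap'' is filled, I would use the first formula again but with the roles reorganized so that the full interval $[a,b]$ is present. Concretely, since $[a,b]\subseteq C$, one has $C = A'' \cup B''$ where $A''=C\cap(-\infty,b]\supseteq A'\cup[a,b]$; applying additivity of magnitude across the point $b$ (or directly computing $\mg{A'\cup[a,b]}$ via the first formula and the interval value $\mg{[a,b]}=1+\tfrac{b-a}{2}$ from \eqref{E:interval-magnitude}) and subtracting, the $\tanh$ terms cancel and the net effect is precisely to replace the contribution of the filled segment by its magnitude. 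The difference $\mg{C}-\mg{C\setminus(a,b)}$ then collapses to $\tfrac{b-a}{2}-\tanh\tfrac{b-a}{2}$, which rearranges to the stated formula.

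The main obstacle I anticipate is bookkeeping at the two decomposition steps in the second part: ensuring that every application of the first formula is to a genuinely disjoint pair meeting the hypothesis $\sup\le\inf$, and correctly tracking the constant $1$'s and $\tanh$ boundary terms so that they cancel to leave exactly $\tfrac{b-a}{2}-\tanh\tfrac{b-a}{2}$. The cleanest route is probably to compute $\mg{C}$ by writing $C = \bigl(C\cap(-\infty,a]\bigr)\cup[a,b]\cup\bigl(C\cap[b,\infty)\bigr)$ and applying the additive formula twice, then compare with the single application giving $\mg{C\setminus(a,b)}$; the interval magnitude \eqref{E:interval-magnitude} supplies the one nontrivial ingredient. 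The first part is essentially immediate once the right finite approximations are chosen, with the only subtlety being the harmless insertion of the endpoints $a$ and $b$ into the approximating sets.
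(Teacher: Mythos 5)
Your proposal is correct and follows essentially the same route as the paper: the first identity is verified for finite sets by comparing the gap sums in Theorem~\ref{T:R-finite-mag} and then extended to compact sets by approximation/continuity, and the second identity comes from the decomposition $C = \bigl(C\cap(-\infty,a]\bigr)\cup[a,b]\cup\bigl(C\cap[b,\infty)\bigr)$ together with two applications of the first identity and \eqref{E:interval-magnitude}. The ``cleanest route'' you settle on at the end is exactly the paper's argument, so the preceding hedging about bookkeeping can simply be dropped.
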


\begin{proof}
  The first claim follows immediately from Theorem
  \ref{T:R-finite-mag} in the case that $A$ and $B$ are finite, and
  then follows for general compact sets by continuity.  The second
  equality follows by writing $C = A \cup [a,b] \cup B$, where
  $A = C \cap (-\infty, a]$ and $B = C \cap [b, \infty)$, then
  applying the first equality twice and \eqref{E:interval-magnitude}.
\end{proof}

Corollary \ref{T:R-union-mag}, together with continuity and the
knowledge of the magnitude of a compact interval, can be used to
compute the magnitude of any compact set $A \subseteq \R$, since $A$
can be written as
\begin{equation}
  \label{E:R-compact}
  A = [a,b] \setminus \bigcup_i (a_i, b_i),
\end{equation}
where $\{(a_i,b_i)\}$ is a finite or countable collection of disjoint
subintervals of $[a,b]$. 

\begin{cor}
  \label{T:R-mag-gaps}
  If $A \subseteq \R$ is compact, then
  \[
  \mg{A} = 1 + \frac{\vol_1 A}{2} + \sum_i \tanh \frac{b_i - a_i}{2},
  \]
  where $a_i$ and $b_i$ are as in \eqref{E:R-compact}.
\end{cor}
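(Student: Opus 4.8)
The plan is to reduce to the case of finitely many gaps and then pass to a limit, using Corollary \ref{T:R-union-mag} to strip off one gap at a time.

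\textbf{Finite case.} Suppose first that the collection $\{(a_i,b_i)\}$ in \eqref{E:R-compact} is finite, with $n$ gaps. Since these gaps are the connected components of $[a,b]\setminus A$, they are pairwise separated by points of $A$, and in particular each endpoint $a_j,b_j$ lies in $A$. I would remove the gaps one at a time, starting from $C_0=[a,b]$ and setting $C_k=[a,b]\setminus\bigcup_{i=1}^k(a_i,b_i)$. At each stage the closed interval $[a_{k+1},b_{k+1}]$ lies in $C_k$: its interior $(a_{k+1},b_{k+1})$ is a gap of $A$, hence disjoint from the previously removed gaps, while its endpoints belong to $A\subseteq C_k$. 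So the second part of Corollary \ref{T:R-union-mag} applies and gives
\[
\mg{C_{k+1}}=\mg{C_k}-\frac{b_{k+1}-a_{k+1}}{2}+\tanh\frac{b_{k+1}-a_{k+1}}{2}.
\]
Summing these $n$ identities, using $\mg{[a,b]}=1+\frac{b-a}{2}$ from \eqref{E:interval-magnitude}, and noting $\vol_1 A=(b-a)-\sum_{i=1}^n(b_i-a_i)$, yields the claimed formula in this case.

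\textbf{Countable case.} For a countably infinite family of gaps, I would set $A_n=[a,b]\setminus\bigcup_{i=1}^n(a_i,b_i)$, a compact subset of $\R$ to which the finite case applies:
\[
\mg{A_n}=1+\frac{\vol_1 A_n}{2}+\sum_{i=1}^n\tanh\frac{b_i-a_i}{2}.
\]
The right-hand side converges as $n\to\infty$: the series $\sum_i(b_i-a_i)$ has partial sums bounded by $b-a$, so $\vol_1 A_n\to\vol_1 A$, and since $\tanh\frac{b_i-a_i}{2}\le\frac{b_i-a_i}{2}$ the series $\sum_i\tanh\frac{b_i-a_i}{2}$ converges as well. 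It then remains to identify the limit of the left-hand side with $\mg{A}$.

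For this I would show $A_n\to A$ in the Hausdorff metric and invoke the continuity of magnitude on compact subsets of $\R$ (Corollary \ref{T:pw-mag-cont}). Since $A\subseteq A_n$, one direction of the Hausdorff distance vanishes; for the other, any point of $A_n\setminus A$ lies in some gap $(a_i,b_i)$ with $i>n$, and its distance to $A$ is at most $b_i-a_i$, so $d_H(A_n,A)\le\sup_{i>n}(b_i-a_i)$. Because $\sum_i(b_i-a_i)<\infty$, the gap lengths tend to $0$, whence $\sup_{i>n}(b_i-a_i)\to 0$ and $d_H(A_n,A)\to 0$. Passing to the limit in the displayed identity for $\mg{A_n}$ gives the result. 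I expect this last limiting step to be the only real obstacle: one must verify that the approximants converge in the Hausdorff metric irrespective of the order in which the gaps are enumerated, which reduces to the observation that the summable gap lengths have tail supremum tending to zero.
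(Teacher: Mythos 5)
Your proof is correct and follows exactly the route the paper intends: iterate the second identity of Corollary \ref{T:R-union-mag} to handle finitely many gaps, then pass to countably many gaps via Hausdorff convergence and the continuity of magnitude on compact subsets of $\R$ (Corollary \ref{T:pw-mag-cont}). The paper only sketches this argument, and your write-up fills in the details (endpoints of gaps lying in $A$, the tail of the summable gap lengths controlling $d_H(A_n,A)$) correctly.
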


Another proof of Corollary \ref{T:R-mag-gaps} can be given using
\cite[Proposition 3.2.3]{MMS}. As an application of Corollary
\ref{T:R-mag-gaps}, we obtain the magnitude of the length $\ell$
ternary Cantor set $C_{\ell}$ (see \cite[Theorem
10]{LeWi}, \cite[Theorem 4]{Willerton-homog}):
\[
\mg{C_\ell} = 1 + \frac{1}{2} \sum_{i=1}^\infty \tanh \frac{\ell}{2
  \cdot 3^i}.
\]

\subsection{Magnitude in the $\ell_1$-norm}
\label{S:1-norm}

The magnitude of subsets of $\R^n$ is generally most tractable when we
equip $\R^n$ with the $\ell_1$-norm.  Although that may not be the
norm of primary geometric interest, it provides a testing ground for
questions that are more difficult to settle in Euclidean space.

We have already seen that $\ell_1^n$, like $\ell_2^n$, is of negative
type (Theorem \ref{T:pd-examples}).  The key difference is
Proposition \ref{T:compact-ell1-product}, the multiplicativity of
magnitude with respect to the $\ell_1$ product.  Since we already know
the magnitude of intervals, this immediately allows us to calculate
the magnitude of boxes in $\ell_1^n$.  Unions of boxes can then be
used to approximate more complex subsets, as we shall see.

Explicitly, a box $\prod_{i = 1}^n [a_i, a_i + L_i]$ in $\ell_1^n$ has
magnitude $\prod_{i = 1}^n (1 + L_i/2)$.  It follows that
$\mg{tA} \to 1$ as $t \to 0^+$ for boxes $A$.  But then monotonicity
of magnitude (Proposition \ref{T:compact-monotone}) implies a more
general result:

\begin{prop}
\label{propn:ell1-mag-at-zero}
If $A \subseteq \ell_1^n$ is compact, then $\lim_{t \to 0^+}
\mg{tA} = 1$.
\end{prop}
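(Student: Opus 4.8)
The plan is to trap $\mg{tA}$ between $1$ and the magnitude of a rescaled box containing $A$, then pass to the limit by the squeeze theorem.

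First I would use compactness: since $A \subseteq \ell_1^n$ is compact it is bounded, and hence contained in some box $B = \prod_{i=1}^n [a_i, a_i + L_i]$. Because $\ell_1^n$ is of negative type (Theorem \ref{T:pd-examples}), for every $t > 0$ the rescaled space $tB$ is a compact positive definite metric space, with $tA \subseteq tB$ a nonempty subset. The monotonicity result Proposition \ref{T:compact-monotone} then applies to this inclusion and gives
\[
1 \le \mg{tA} \le \mg{tB}
\]
for every $t > 0$.

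Next I would use the explicit formula recalled just above the statement: the scaled box $tB = \prod_{i=1}^n [ta_i, ta_i + tL_i]$ has magnitude $\prod_{i=1}^n (1 + tL_i/2)$, which tends to $1$ as $t \to 0^+$. Together with the lower bound $\mg{tA} \ge 1$, the sandwich above yields $\lim_{t \to 0^+} \mg{tA} = 1$ by the squeeze theorem.

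The main (and essentially only) point requiring care is verifying that the monotonicity hypothesis is met, namely that $tB$ is positive definite for each $t$; this is exactly what the negative type of $\ell_1^n$ provides. Everything else reduces to the known magnitude of a box and an elementary limit, so I do not expect any serious obstacle.
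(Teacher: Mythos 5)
Your proposal is correct and is essentially the paper's own argument: the proposition is stated there as an immediate consequence of the formula $\mg{B} = \prod_i (1 + L_i/2)$ for boxes together with the monotonicity of magnitude (Proposition \ref{T:compact-monotone}), exactly the squeeze you describe. Your extra care in checking that $tB$ is positive definite (via the negative type of $\ell_1^n$) is a correct and appropriate verification of the hypothesis.
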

(In $\ell_2^n$, this is much harder to prove; see Theorem \ref{T:shrink}.)
Proposition~\ref{propn:ell1-mag-at-zero} and Theorem~\ref{T:mf-cont}
together imply that the magnitude function $t \mapsto \mg{tA}$ is
continuous on $[0, \infty)$.

Our formula for the magnitude of a box in $\ell_1^n$ can be rewritten in
terms of the intrinsic volumes $V_0, V_1, \ldots$ (defined in, for
instance, Chapter~7 of \cite{KlRo} or Chapter~4 of \cite{SchnCBB}).
Recall that $V_i(A)$ is the canonical $i$-dimensional measure of a convex
set $A \subseteq \R^n$, and that the intrinsic volumes are characterized by
Steiner's polynomial formula
\[
\vol(A + r\ball{n}) = \sum_{i = 0}^n \omega_{n - i} V_i(A) r^{n - i}
\]
(Proposition~9.2.2 of \cite{KlRo} or Equation~4.1 of \cite{SchnCBB}),
where $\ball{n}$ is the unit Euclidean $n$-ball and
$\omega_j = \vol(\ball{j})$.  For boxes $A \subseteq \ell_1^n$, the
formula above can be rewritten as
\begin{equation}
\label{eq:ell1-mag-box-bad}
\mg{A} 
=
\sum_{i = 0}^n \frac{V_i(A)}{2^i},
\end{equation}
either by direct calculation or by noting that $\mg{[0, L]} = 1 + V_1([0,
  L])/2$ and using the multiplicative property of the intrinsic volumes
(Theorem~9.7.1 of \cite{KlRo}).  Hence the magnitude function of a box
$A$ is a polynomial
\[
\mg{tA}
=
\sum_{i = 0}^n \frac{V_i(A)}{2^i} t^i
\]
whose coefficients are (up to known factors) the intrinsic volumes of $A$,
and whose degree is its dimension.  In particular, the magnitude function
of a box determines all of its intrinsic volumes and its dimension.


In fact, such a result is true for a much larger class of subsets of
$\ell_1^n$ than just boxes.  To show this, we must adapt the classical
notion of intrinsic volume to $\ell_1^n$, following \cite{IGON}.

First recall that a metric space $A$ is \dfn{geodesic} if for any $a, b
\in A$ there exists a distance-preserving map $\gamma \colon [0, d(a, b)]
\to A$ such that $\gamma(0) = a$ and $\gamma(d(a, b)) = b$.  The geodesic
subsets of $\ell_2^n$ are the convex sets.  The geodesic subsets of
$\ell_1^n$, called the \dfn{$\ell_1$-convex sets} \cite{IGON}, include the
convex sets and much else besides (such as \textsf{L} shapes).  In this
setting, there is a Steiner-type theorem in which balls are replaced by
cubes (Theorem~6.2 of \cite{IGON}): for any $\ell_1$-convex compact set $A
\subseteq \ell_1^n$, writing $\cube{n} = [-1/2, 1/2]^n$,
\begin{equation}
\label{eq:steiner}
\vol(A + r\cube{n}) 
= 
\sum_{i = 0}^n V'_i(A) r^{n - i}
\end{equation}
where $V'_0(A), \ldots, V'_n(A)$ depend only on $A$.

The functions $V'_0, V'_1, \ldots$ on the class of $\ell_1$-convex
compact sets are called the \dfn{$\ell_1$-intrinsic volumes}
\cite{IGON}.  They are valuations (that is, finitely additive),
continuous with respect to the Hausdorff metric, and invariant under
isometries of $\ell_1^n$.  There is a well-developed integral geometry
of $\ell_1$-convex sets \cite{IGON}, closely parallel to the classical
integral geometry of convex sets; for instance, there is a
Hadwiger-type theorem for $\ell_1$-intrinsic volumes.

Although the intrinsic and $\ell_1$-intrinsic volumes are not in general
equal, they coincide for boxes $A$, giving
\begin{equation}
\label{eq:mag-ell1-box}
\mg{A} = \sum_{i = 0}^n \frac{V'_i(A)}{2^i},
\qquad
\mg{tA} = \sum_{i = 0}^n \frac{V'_i(A)}{2^i} t^i
\end{equation}
(the latter because $V'_i$ is homogeneous of degree $i$).  It is this
relationship, not~\eqref{eq:ell1-mag-box-bad}, that generalizes
from boxes to a much larger class of sets.

\begin{conj}[{\cite[Conjecture~3.4.10]{MMS}}]
  \label{C:ell1-convex}
  For all compact $\ell_1$-convex sets $A \subseteq \ell_1^n$,
  \[
  \mg{A} = \sum_{i = 0}^n \frac{V'_i(A)}{2^i}.
  \]
\end{conj}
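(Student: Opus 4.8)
The plan is to prove the identity by a Hadwiger-type characterization of magnitude on the class of compact $\ell_1$-convex sets, combined with the known formula for boxes. First I would check that $A \mapsto \mg{A}$ is a genuine real-valued functional on this class: magnitude is defined because $\ell_1^n$ is of negative type (Theorem \ref{T:pd-examples}), and it is finite because any compact $\ell_1$-convex $A$ sits inside a box, whose magnitude $\prod_j(1 + L_j/2)$ is finite, so monotonicity (Proposition \ref{T:compact-monotone}) gives $\mg{A} < \infty$. Invariance under the isometries of $\ell_1^n$ (translations together with the hyperoctahedral group of coordinate permutations and sign changes) is automatic, since magnitude is by construction an isometric invariant. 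Continuity with respect to the Hausdorff metric is known for convex bodies by Theorem \ref{T:star-continuity}, and I would need to extend it to the larger class of $\ell_1$-convex sets; this I regard as a secondary technical point rather than the main difficulty.

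The decisive structural property to establish is that magnitude is a \emph{valuation}, i.e.\ that
\[
\mg{A \cup B} + \mg{A \cap B} = \mg{A} + \mg{B}
\]
whenever $A$, $B$, $A \cap B$ and $A \cup B$ are all compact $\ell_1$-convex. Granting additivity, continuity, and isometry-invariance, the Hadwiger-type theorem for $\ell_1$-intrinsic volumes \cite{IGON} forces
\[
\mg{A} = \sum_{i = 0}^n c_i V'_i(A)
\]
for constants $c_0, \dots, c_n$ independent of $A$. These are pinned down by evaluating on boxes. For the box $\prod_j [0, L_j]$ we have $V'_i(A) = e_i(L_1, \dots, L_n)$, the $i$-th elementary symmetric polynomial, while \eqref{eq:mag-ell1-box} gives $\mg{A} = \prod_j(1 + L_j/2) = \sum_i e_i(L)/2^i$. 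Since the polynomials $L \mapsto e_i(L)$ are linearly independent, comparing the two expressions yields $c_i = 2^{-i}$ for every $i$, which is exactly the asserted identity. (Proving the equality at $t = 1$ for all $A$ suffices, as applying it to $tA$ and using the homogeneity $V'_i(tA) = t^i V'_i(A)$ recovers the full magnitude function.) Multiplicativity over $\ell_1$-products (Proposition \ref{T:compact-ell1-product}) is what supplies this box base case, and would also let one verify the conjecture directly on $\ell_1$-products of lower-dimensional $\ell_1$-convex sets.

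The hard part will be the valuation property. Unlike the classical intrinsic volumes, magnitude is a \emph{globally optimized} quantity — a supremum over measures (Proposition \ref{T:compact-sup}) or, dually, an infimum of squared RKHS norms of potential functions (Theorem \ref{T:inf}) — and such variational invariants are not manifestly finitely additive. The one-dimensional computation of Corollary \ref{T:R-union-mag} displays both the danger and a reason for optimism: there $\mg{A \cup B}$ acquires a correction $\tanh\frac{b-a}{2}$ governed by the gap between $A$ and $B$, yet precisely when $A$ and $B$ abut, so that $A \cap B$ is a single point and the gap vanishes, the correction disappears and additivity holds on the nose. To promote this to $\ell_1^n$ I would work in the RKHS picture of Theorem \ref{T:inf}, attempting to assemble the potential functions $h_A$ and $h_B$ into a candidate potential function for $A \cup B$ while controlling the overlap contribution along $A \cap B$; alternatively, where positive weight measures exist (Proposition \ref{T:positively-weighted}) one could try to glue weight measures for $A$ and $B$ and correct along the shared face. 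Showing that such a construction produces the exact additivity relation above, with no residual boundary term in dimension $n$, is the crux of the argument, and is where I expect the essential obstruction to lie.
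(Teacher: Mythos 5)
The statement you are proving is stated in the paper as a \emph{conjecture}, and the paper does not prove it; it only establishes the partial cases collected in Theorem~\ref{thm:ell1-cvx} (the inequality $\mg{A} \le \sum_i 2^{-i}V'_i(A)$ in general, with equality for convex bodies and in $\ell_1^2$). Your Hadwiger-type strategy is a reasonable way to attack the full conjecture, but as written it reduces the conjecture to two unproven statements, each of which is itself a consequence of the conjecture and neither of which is currently known. The first is the valuation property of magnitude on $\ell_1$-convex sets, which you correctly flag as the crux and do not prove; nothing in the paper's toolkit (the RKHS infimum of Theorem~\ref{T:inf}, the measure supremum of Proposition~\ref{T:compact-sup}) yields finite additivity of a variational quantity, and the one-dimensional Corollary~\ref{T:R-union-mag} succeeds only because weight measures on $\R$ exist and glue explicitly. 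The second gap is more serious than you allow: you call Hausdorff continuity of magnitude on $\ell_1$-convex sets ``a secondary technical point,'' but the paper observes (in its list of open problems) that, given Theorem~\ref{thm:ell1-cvx}, Conjecture~\ref{C:ell1-convex} is \emph{equivalent} to that continuity statement. Indeed, once one has equality for pixelated $\ell_1$-convex sets (Proposition~\ref{propn:ell1-cvx-pix}) and continuity of the $V'_i$, continuity of magnitude along the outer pixelations $A_\lambda \to A$ immediately yields the conjecture. So the step you propose to defer is the entire content of the problem, and Theorem~\ref{T:star-continuity} does not help because $\ell_1$-convex sets (for example lower-dimensional ones, or \textsf{L}-shapes with thin arms) need not be star-shaped about an interior point.

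For comparison, the paper's route to its partial results avoids both issues by never proving that magnitude itself is a valuation: it \emph{constructs} a candidate weight measure $\mu_A$ on finite unions of subpixels by forcing the inclusion--exclusion identity via Groemer's extension theorem (Lemma~\ref{lemma:subpixelated-extension}), and then the real work, an induction on dimension and number of pixels exploiting the $\ell_1$-product structure and projections (Proposition~\ref{propn:ell1-wtg}), shows this object actually is a weight measure. Equality with $\sum_i 2^{-i}V'_i$ then follows on pixelated sets because both sides are valuations agreeing on boxes, and passes to convex bodies by inner and outer approximation. If you want to salvage your approach, the productive direction is not the Hadwiger machinery (which only repackages the normalization on boxes you already have) but either a proof of continuity on $\ell_1$-convex sets or an extension of the weight-measure gluing beyond the pixelated case; both are open.
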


We will prove the following parts of this conjecture:

\begin{thm}
\label{thm:ell1-cvx}
~
\begin{enumerate}
\item
\label{part:ell1-cvx-ineq}
$\mg{A} \leq \sum_{i = 0}^n 2^{-i} V'_i(A)$ for all compact
  $\ell_1$-convex sets $A \subseteq \ell_1^n$.

\item
\label{part:ell1-cvx-bodies}
$\mg{A} = \sum_{i = 0}^n 2^{-i} V'_i(A)$ for all convex bodies $A
\subseteq \ell_1^n$.

\item
\label{part:ell1-cvx-two}
$\mg{A} = \sum_{i = 0}^2 2^{-i} V'_i(A)$ for all compact convex sets
$A \subseteq \ell_1^2$.
\end{enumerate}
\end{thm}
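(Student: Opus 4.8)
The plan is to prove all three statements by a single scheme: approximate $A$ by finite unions of axis-parallel grid boxes, reduce the magnitude of such a ``box complex'' to the known magnitude of a single box via the multiplicativity of magnitude over $\ell_1$-products (Proposition~\ref{T:compact-ell1-product}), and pass to the limit using monotonicity (Proposition~\ref{T:compact-monotone}) together with the continuity of the $\ell_1$-intrinsic volumes in the Hausdorff metric. Since both sides of the conjectured identity are continuous and agree on boxes, everything hinges on comparing $\mg{\cdot}$ with the valuation $v(\cdot) := \sum_{i=0}^n 2^{-i} V'_i(\cdot)$ on box complexes, and on controlling the approximation from inside and outside. For a convex body $A$ I would sandwich $A$ between its inner and outer grid pixelations $P_\delta \subseteq A \subseteq P^\delta$, both of which are again ``orthogonally convex'' complexes, while for a general $\ell_1$-convex set I would use only the outer pixelation.

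The engine is the variational description of magnitude in Theorem~\ref{T:inf}, which exhibits $\mg{\cdot}$ as a capacity-type energy: the minimal squared RKHS-norm of a function equal to $1$ on $A$. Because the one-dimensional kernel $e^{-|s-t|}$ is the Green's function of $1 - \frac{d^2}{dt^2}$, the corresponding energy is \emph{local} (a first-order Sobolev energy), and the $\ell_1$-kernel $e^{-d(x,y)} = \prod_i e^{-|x_i-y_i|}$ makes the ambient RKHS a tensor product of such factors. From this I would extract two facts. First, magnitude is strongly subadditive, $\mg{P \cup Q} + \mg{P \cap Q} \le \mg{P} + \mg{Q}$; together with the modularity of $v$, the equality $\mg{B} = v(B)$ on boxes, and the outer pixelation, this gives the upper bound $\mg{A} \le v(A)$ of part~\ref{part:ell1-cvx-ineq}. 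Second, when a complex is cut along a coordinate hyperplane $H$ whose trace is a \emph{full separating wall} $F = P \cap H$, the locality of the energy should force the cut to be \emph{exact}, $\mg{P} = \mg{P^-} + \mg{P^+} - \mg{F}$, the two sides being coupled only through their shared boundary data on $H$.

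Granting this exact cutting identity, I would prove $\mg{P} = v(P)$ for every orthogonally convex box complex by a double induction on the dimension and the number of boxes: peel off one box along a connected wall $F$, which is itself a lower-dimensional orthogonally convex complex, apply the inductive hypothesis $\mg{F} = v(F)$, and use modularity of $v$. Feeding this into the sandwich gives equality for convex bodies (part~\ref{part:ell1-cvx-bodies}), since inner and outer pixelations of a convex body are orthogonally convex and $v(P_\delta), v(P^\delta) \to v(A)$. The planar statement (part~\ref{part:ell1-cvx-two}) is the base of the dimension induction; there I would also dispatch the lower-dimensional convex sets (segments and points in $\ell_1^2$, which have empty interior and so are invisible to pixelation) by a direct computation of their magnitude and of $V'_1$. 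An alternative, more explicit route to the lower bounds is to assemble a weight measure from the one-dimensional weight measures $\tfrac{1}{2}(\delta_a + \lambda_{[a,b]} + \delta_b)$ over the faces of a convex pixelation and to verify via Proposition~\ref{T:weight-measure-mag} (or the lower bound in Proposition~\ref{T:compact-sup}) that its total mass is exactly $v$.

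The main obstacle is precisely the reverse inequality $\mg{A} \ge v(A)$ in the equality cases, equivalently the \emph{exactness} of the cutting identity across a clean wall. Strong subadditivity is soft and yields only $\le$; promoting it to an equality requires showing that no energy is saved by letting the optimal potential interact across the wall, i.e.\ an $\ell_1$-analogue of the Markov property of the kernel $e^{-|s-t|}$. This is also exactly the point at which convexity is indispensable: for an orthogonally convex complex the cutting walls are connected lower-dimensional convex complexes, to which the induction applies, whereas for a general $\ell_1$-convex set the inner pixelation may fail to be orthogonally convex (its walls can disconnect, as already happens for disconnected subsets of $\R$, where $\mg{\cdot}$ is strictly submodular by Corollary~\ref{T:R-mag-gaps}), so only the outer, upper-bound half of the argument survives --- which is why part~\ref{part:ell1-cvx-ineq} is stated as an inequality. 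I expect the hardest technical step to be a rigorous proof of the exact cut, with the planar case requiring separate care because of the degenerate convex sets.
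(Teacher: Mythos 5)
Your overall skeleton (pixelate, reduce to boxes, inclusion--exclusion against the valuation $v = \sum_{i=0}^n 2^{-i}V_i'$, approximate from inside and outside, invoke monotonicity and continuity of the $V_i'$) is the same as the paper's. But the engine you propose to drive it --- strong subadditivity of magnitude plus an \emph{exact} cutting identity across a separating wall --- is precisely what you do not establish, and neither is available ``softly''. Strong subadditivity is classical for capacities defined by the constraint $f \ge 1$ with respect to a first-order Dirichlet energy, because $\max(f,g)$ and $\min(f,g)$ are admissible and do not increase the total energy; here the constraint in Theorem~\ref{T:inf} is $f \equiv 1$, and for $n \ge 2$ the RKHS of $\ell_1^n$ is a tensor product of copies of $H^1(\R)$ whose norm involves mixed partial derivatives, so the lattice operations are not energy-decreasing. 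No form of (sub)modularity of magnitude is known in this generality --- indeed, whether inclusion--exclusion holds even asymptotically for convex sets is listed as an open problem in section~\ref{S:open-problems}. Moreover, even granting submodularity, the alternating inclusion--exclusion \emph{upper} bound over $m \ge 3$ boxes is not a formal consequence of it, and your derivation of $\mg{P \cup B} \le v(P \cup B)$ from $\mg{P\cup B} \le \mg{P} + \mg{B} - \mg{P \cap B}$ needs the \emph{lower} bound $\mg{P\cap B} \ge v(P\cap B)$ on the intersection, i.e.\ the hard direction. Since you yourself flag the exact cut as an unproved hope, both halves of the main argument are open as written.

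The one-sentence ``alternative, more explicit route'' at the end of your proposal is in fact the paper's actual proof, and it sidesteps both difficulties. One defines $\mu_A$ for finite unions of subpixels by inclusion--exclusion from the product weight measures $\mu_B = \prod_i \mu_{[a_i,b_i]}$, with Groemer's extension theorem guaranteeing consistency (Lemma~\ref{lemma:subpixelated-extension}), and then proves by induction on dimension and number of pixels that for a \emph{pixelated $\ell_1$-convex} set this $\mu_A$ genuinely satisfies the weighting equation $\int e^{-d(a,b)}\,d\mu_A(b) = 1$ (Proposition~\ref{propn:ell1-wtg}). That induction uses exactly the structure you identified as relevant --- the factorization $Z(a,c) = Z(a,a')Z(a',c)$ along a coordinate direction (the one-dimensional Markov property) and the closure of pixelated $\ell_1$-convex sets under slicing and coordinate projection --- but packaged as a verification of the weighting equation rather than as an energy-splitting identity, so no inequality ever has to be promoted to an equality. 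Once a weight measure exists, $\mg{A} = \mu_A(\R^n)$ outright, and the valuation property of $v$ gives $\mu_A(\R^n) = v(A)$. Outer pixelation plus monotonicity then yields part~(\ref{part:ell1-cvx-ineq}); for part~(\ref{part:ell1-cvx-bodies}) convexity is used to shrink $A$ to $\alpha A$ so that a pixelation of the shrunken copy sits \emph{inside} $A$ (your diagnosis of why general $\ell_1$-convex sets only give an inequality is essentially right); and the only leftover case in part~(\ref{part:ell1-cvx-two}) is a segment. To repair your write-up, promote that final sentence to the main argument and supply the inductive verification that the assembled measure is a weight measure.
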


(A \dfn{convex body} is a compact convex set with nonempty interior.)

For the proof, we will use some special classes of box.  A \dfn{pixel} in
$\R^n$ is a unit cube $\prod_{i = 1}^n [a_i, a_i + 1]$ with integer
coordinates $a_i$.  More generally, a \dfn{subpixel} is a box $\prod_{i =
  1}^n [a_i, b_i]$ with $a_i \in \mathbb{Z}$ and $b_i \in \{a_i, a_i +
1\}$.  Note that the intersection of two subpixels is either a subpixel or
empty.

Equation \eqref{E:interval-weight-measure} and Proposition
\ref{T:weight-measure-product} imply that for any box $B = \prod_i
[a_i, b_i]$ in $\ell_1^n$, the product measure $\mu_B = \prod_{i =
  1}^n \mu_{[a_i, b_i]}$ is a weight measure on $B$.

\begin{lemma}
\label{lemma:subpixelated-extension}
There is a unique function
\[
\begin{array}{ccc}
\{\text{finite unions of subpixels in } \R^n \}     &
\to     &
\{\text{signed Borel measures on } \R^n \}      \\
A       &
\mapsto &
\mu_A
\end{array}
\]
extending the definition above for subpixels and satisfying $\supp
\mu_A \subseteq A$, $\mu_\emptyset = 0$, and $\mu_{A \cup B} = \mu_A +
\mu_B - \mu_{A \cap B}$ whenever $A$ and $B$ are finite unions of
subpixels.
\end{lemma}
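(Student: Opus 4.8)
The plan is to reduce the statement to linear algebra via indicator functions, exploiting the fact (noted just before the lemma) that the intersection of two subpixels is a subpixel or empty. First I would observe that each subpixel $P = \prod_{i=1}^n [a_i,b_i]$ is a product of $1$-dimensional cells, each factor being either a lattice point $\{a_i\}$ or a unit edge $[a_i, a_i+1]$; thus the subpixels are precisely the closed faces of the standard cubical subdivision of $\R^n$. Writing $\I_S$ for the indicator function of a set $S$, the decisive point is that the functions $\{\I_P : P \text{ a subpixel}\}$ are linearly independent in the real vector space of functions on $\R^n$. This follows because in dimension one the relative interiors of the cells --- the open intervals $(k,k+1)$ and the singletons $\{k\}$, $k \in \Z$ --- partition $\R$, so the $1$-dimensional cell indicators are linearly independent; since $\I_P(x) = \prod_{i=1}^n \I_{e_i}(x_i)$ for the factorization $P = e_1 \times \cdots \times e_n$, the usual tensor-product argument upgrades this to linear independence of the $\I_P$ in $n$ dimensions.

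Let $V$ be the span of $\{\I_P\}$, so that $\{\I_P\}$ is a basis of $V$. Because it is a \emph{basis}, I may define a linear map $\Phi \colon V \to \mathcal{M}(\R^n)$ into the space of signed Borel measures by setting $\Phi(\I_P) = \mu_P$ on this basis, with no consistency condition to check --- this is exactly where linear independence does the work. For a finite union $A = \bigcup_{i=1}^m P_i$, inclusion--exclusion gives
\[
\I_A = 1 - \prod_{i=1}^m (1 - \I_{P_i}) = \sum_{\emptyset \neq S \subseteq \{1,\dots,m\}} (-1)^{\mg{S}+1}\, \I_{\bigcap_{i \in S} P_i},
\]
and each intersection $\bigcap_{i \in S} P_i$ is a subpixel or empty (with $\I_\emptyset = 0$), so $\I_A \in V$. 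I then define $\mu_A := \Phi(\I_A)$. Since $\I_A$ depends only on the set $A$ and not on the chosen representation as a union of subpixels, $\mu_A$ is well defined.

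The stated properties then follow formally. On a subpixel the definition is recovered, as $\Phi(\I_P) = \mu_P$; and $\mu_\emptyset = \Phi(0) = 0$. The valuation identity is immediate from linearity of $\Phi$ and the pointwise identity $\I_{A \cup B} = \I_A + \I_B - \I_{A \cap B}$, using that $A \cap B$ is again a finite union of subpixels. For the support condition I would note $\supp \mu_P \subseteq P$ for each subpixel (as $\mu_P$ is a product of measures supported on the factors $[a_i,b_i]$), and then use the inclusion--exclusion expansion of $\mu_A = \Phi(\I_A)$ together with $\bigcap_{i \in S} P_i \subseteq A$ to conclude $\supp \mu_A \subseteq A$. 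Uniqueness follows because the two-set valuation identity forces the full inclusion--exclusion formula for any finite union by induction on $m$, so any function satisfying the stated conditions is determined by its values on subpixels, where it must agree with $\mu$.

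For the main obstacle: the only genuine content is the linear independence of the subpixel indicators, which simultaneously guarantees that $\Phi$ is well defined and that $\mu_A$ is independent of the representation of $A$; everything afterward is bookkeeping with inclusion--exclusion. I would expect the one point requiring care to be the repeated use of the fact that intersections of subpixels are again subpixels or empty, which underlies both the membership $\I_A \in V$ and the valuation identity.
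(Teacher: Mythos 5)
Your proof is correct, but it takes a genuinely different route from the paper's. The paper invokes Groemer's extension theorem for valuations (Theorem~6.2.1 of \cite{SchnCBB}), which reduces the lemma to verifying inclusion--exclusion only in the degenerate case where a union of subpixels is itself a subpixel; that case is trivial, since such a union must have one subpixel containing all the others, so the alternating sum telescopes. You instead inline what amounts to a proof of the relevant instance of Groemer's theorem: linear independence of the subpixel indicators $\{\I_P\}$ lets you define the linear map $\Phi$ on a basis with no compatibility conditions to check, and passing through the canonical object $\I_A$ delivers well-definedness (independence of the chosen decomposition), the valuation identity, and the support condition in one stroke. Your version is longer but self-contained, and it makes visible exactly where the content lies. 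One detail to tighten: the partition of $\R$ by relative interiors of cells does not by itself yield linear independence of the one-dimensional indicators, because a lattice point $k$ lies in the three cells $\{k\}$, $[k-1,k]$, and $[k,k+1]$; you should first evaluate a vanishing linear combination at interior points of the unit intervals to kill the interval coefficients, and only then at the lattice points to kill the point coefficients. With that ordering (the usual triangularity-by-dimension argument), the independence claim, the tensor-product upgrade to dimension $n$, and the remaining bookkeeping are all sound.
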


\begin{proof}
  By the extension theorem of Groemer (Theorem~6.2.1 of
  \cite{SchnCBB}), it suffices to show that for any subpixels
  $B_1, \ldots, B_m$ such that $B_1 \cup \cdots \cup B_m$ is a
  subpixel,
\[
\mu_{B_1 \cup \cdots \cup B_m}
=
\sum_{k \geq 0} (-1)^k \!\!
\sum_{1 \leq j_0 < \cdots < j_k \leq m} 
\mu_{B_{j_0} \cap \cdots \cap B_{j_k}}.
\]
But $B_1 \cup \cdots \cup B_m$ is only a subpixel if some $B_j$ contains
all the others, and in that case the sum telescopes and the proof is
trivial.
\end{proof}

A subset $A$ of $\ell_1^n$ is \dfn{1-pixelated} if it is a finite union
of pixels; then $\lambda A$ is said to be \dfn{$\lambda$-pixelated}.  A
set is \dfn{pixelated} if it is $\lambda$-pixelated for some $\lambda >
0$.

\begin{prop}
\label{propn:ell1-wtg}
Let $A \subseteq \ell_1^n$ be an $\ell_1$-convex pixelated set.  Then
$\mu_A$ as given in Lemma \ref{lemma:subpixelated-extension} is a
weight measure on $A$.
\end{prop}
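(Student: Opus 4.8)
The plan is to verify the defining identity $u_A(x) := \int_A e^{-d(x,y)}\,d\mu_A(y) = 1$ for every $x \in A$. By scaling it suffices to treat a $1$-pixelated $A$ (the general case is identical, with unit cubes replaced by cubes of side $\lambda$); and since $u_A$ is a finite sum of products of the continuous one-dimensional potentials introduced below, it is continuous, and $A = \overline{\operatorname{int} A}$ because $A$ is a finite union of $n$-cubes. Hence it is enough to prove $u_A(x) = 1$ for $x = (x', x_n)$ with $x'$ in the relative interior of an $(n-1)$-dimensional base pixel, since such $x$ are dense in $A$.

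The engine is the one-dimensional computation behind \eqref{E:interval-weight-measure}: writing $\phi_I(t) = \int e^{-|t-s|}\,d\mu_I(s)$ for a unit interval or a point $I$, one has $\phi_I \equiv 1$ on $I$ and $\phi_I(t) = e^{-\operatorname{dist}(t,I)}$ for $t \notin I$. The key \emph{collapse identity} is that, for $t$ on one side of $I$, $\phi_I(t)$ depends only on the nearest endpoint of $I$; in particular $\phi_{[k,k+1]}(t) = \phi_{\{k+1\}}(t)$ when $t \ge k+1$ and $\phi_{[k,k+1]}(t) = \phi_{\{k\}}(t)$ when $t \le k$. Since the kernel factors as $e^{-d(x,y)} = \prod_i e^{-|x_i - y_i|}$ and, by \eqref{E:interval-weight-measure} together with Proposition \ref{T:weight-measure-product}, $\mu_B = \prod_i \mu_{I_i}$ for a box $B = \prod_i I_i$, the box potential factors as $\prod_i \phi_{I_i}(x_i)$; the collapse identity then says that a box and its face nearest to $x$ have the same potential at $x$.

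I would then induct on $n$, the base case $n = 1$ being Theorem \ref{T:R-finite-mag} (for an integer interval $[\alpha,\beta]$, $\mu_A$ is exactly its interval weight measure). For the inductive step, expand $\mu_A$ by the inclusion–exclusion of Lemma \ref{lemma:subpixelated-extension} and separate the last coordinate. The crucial bookkeeping observation is that the cell coefficients appearing in the layer $\{y_n \in (k, k+1)\}$ are precisely the cell coefficients of the horizontal slice $S_k = \{y' : (y', k + \tfrac12) \in A\}$, while those at the integer level $y_n = k$ are the cell coefficients of $-(S_{k-1} \cap S_k)$; this is just inclusion–exclusion applied slicewise. Consequently
\[
u_A(x) = \sum_k \phi_{[k,k+1]}(x_n)\, u_{S_k}(x') - \sum_k \phi_{\{k\}}(x_n)\, u_{S_{k-1} \cap S_k}(x'),
\]
where each $S_k$ and each $S_{k-1} \cap S_k$ is again an $\ell_1$-convex pixelated subset of $\R^{n-1}$; here $\ell_1$-convexity enters through the fact that all axis-parallel line-sections of $A$ are segments, a property inherited by these slices. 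By connectivity of the vertical section through $x'$, the slices containing $x'$ are exactly those with $\alpha \le k \le \beta - 1$, where $[\alpha,\beta] = \{t : (x', t) \in A\}$ and $x_n \in [\alpha,\beta]$; the inductive hypothesis gives $u_{S_k}(x') = 1$ and $u_{S_{k-1} \cap S_k}(x') = 1$ whenever $x'$ lies in the slice in question, and the corresponding terms assemble into the one-dimensional potential of $[\alpha,\beta]$ at $x_n$, which equals $u_{[\alpha,\beta]}(x_n) = 1$.

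The main obstacle is the remaining ``out-of-range'' terms, those with $x' \notin S_k$. Using the collapse identity (which makes $\phi_{[k,k+1]}(x_n)$ coincide with $\phi_{\{k\}}(x_n)$ above the range and with $\phi_{\{k+1\}}(x_n)$ below it), these pair off against the integer-level terms, and their cancellation reduces to the potential identities $u_{S_k}(x') = u_{S_{k-1} \cap S_k}(x')$ for $k \ge \beta$ and $u_{S_k}(x') = u_{S_k \cap S_{k+1}}(x')$ for $k < \alpha$, at the point $x'$ lying outside all the slices involved. Proving these is the heart of the matter: they express that the part of a higher (or lower) slice not shared with its neighbour is $\ell_1$-``shielded'' from $x'$ and so contributes nothing to the potential there, and I expect to establish them by a secondary induction that again exploits the connectivity of axis-parallel sections. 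Once these identities are in hand the out-of-range contributions vanish, giving $u_A(x) = 1$ and completing the induction.
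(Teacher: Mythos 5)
Your slicing formula for $u_A(x)$ is correct, and the reduction is sound: the in-range terms do assemble into $\phi_{[\alpha,\beta]}(x_n)=1$, and the whole proof does come down to the vanishing of the out-of-range contributions, i.e.\ to the ``shielding'' identities $u_{S_k}(x') = u_{S_{k-1}\cap S_k}(x')$ for $k \ge \beta$ (and their mirror images below $\alpha$) at a point $x'$ lying \emph{outside} the slices involved. But that is exactly where the proposal stops: you say you ``expect to establish them by a secondary induction,'' and no such argument is given. This is not a routine detail. It is a statement about the external potentials of weight measures of $(n-1)$-dimensional $\ell_1$-convex sets, which is essentially as deep as the proposition itself; in dimension $2$ it reduces to checking that the nearest endpoint of the interval $S_k$ to $x'$ lies in $S_{k-1}$ (which does follow from $\ell_1$-convexity, by a monotone-staircase argument), but in higher dimensions nothing of the sort has been established. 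A secondary, smaller gap: you invoke the inductive hypothesis for $S_{k-1}\cap S_k$, but the intersection of two $\ell_1$-convex sets need not be $\ell_1$-convex, so this needs justification (it can be patched via $\mu_{S_{k-1}\cap S_k} = \mu_{S_{k-1}} + \mu_{S_k} - \mu_{S_{k-1}\cup S_k}$ when $x'$ lies in both slices, since $S_{k-1}\cup S_k$ \emph{is} $\ell_1$-convex, but as written the step is unsupported).

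It is worth seeing how the paper's proof sidesteps the difficulty you ran into. Rather than slicing $A$ into all layers at once, it peels off only the \emph{top} layer $A_+$, inducting simultaneously on dimension and on the number of pixels, and chooses coordinates so that the point $a$ lies in the remaining part $A_-$. The required identity is then $\int Z(a,\cdot)\,d\mu_{A_+} = \int Z(a,\cdot)\,d\mu_{A_-\cap A_+}$, which after factoring the kernel through $a'=(\pi(a),0)$ and projecting becomes a statement about $\pi A_+$ and $\pi A_-\cap \pi A_+$; crucially, the inclusion--exclusion relation $\mu_{\pi A} = \mu_{\pi A_-} + \mu_{\pi A_+} - \mu_{\pi A_-\cap\pi A_+}$ converts it into $\int Z(\pi(a),\cdot)\,d\mu_{\pi A} = \int Z(\pi(a),\cdot)\,d\mu_{\pi A_-}$, and both sides equal $1$ by the inductive hypothesis because $\pi(a) \in \pi A_- \subseteq \pi A$. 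In other words, the paper only ever needs potentials evaluated at points that \emph{belong} to the sets in question, so no external ``shielding'' lemma and no inductive hypothesis for the (possibly non-$\ell_1$-convex) intersection are ever required. If you want to complete your version, the most promising route is to prove your shielding identities by a descending induction on $k$ using the same inclusion--exclusion trick, $u_{S_k} - u_{S_{k-1}\cap S_k} = u_{S_{k-1}\cup S_k} - u_{S_{k-1}}$, together with a collapse/monotonicity argument in the last coordinate of $\R^{n-1}$ --- but until that is written out, the proof is incomplete.
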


\begin{proof}
We may harmlessly assume that $A$ is 1-pixelated.  The result holds when
either $n = 0$ or $A$ is a single pixel.  So, we may assume inductively that
$n \geq 1$, that $A$ contains at least two pixels, and that the result
holds for $\ell_1$-convex 1-pixelated sets of smaller dimension or fewer
pixels than $A$.

Fix $a \in A$.  We may assume without loss of generality that at least two
of the pixels in $A$ differ in their last coordinates, that $\sup_{b \in A}
b_n = 1$, and that $a$ belongs to some pixel of $A$ whose center has
negative last coordinate.  Write $A_-$ for the union of the pixels in $A$
whose centers have negative last coordinates, and similarly $A_+$.  Thus,
$a \in A_-$ and the center of every pixel in $A_+$ has last coordinate
$1/2$.  Both $A_-$ and $A_+$ are $\ell_1$-convex 1-pixelated sets (by
Lemma~3.3 of \cite{IGON}), and $A_- \cap A_+$ is a finite union of subpixels
(though need not be pixelated).

We have to show that
\[
\int_{\R^n} Z(a, b) \dee\mu_A(b) = 1.
\]
Since $\mu_A = \mu_{A_+} + \mu_{A_-} - \mu_{A_+ \cap A_-}$ and $\mu_{A_-}$
is a weight measure on $A_-$ (by inductive hypothesis), an equivalent
statement is that
\begin{equation}
\label{eq:ell1-int-1}
\int_{\R^n} Z(a, b) \dee\mu_{A_+}(b)
=
\int_{\R^n} Z(a, b) \dee\mu_{A_- \cap A_+}(b).
\end{equation}
Write $\pi \colon \R^n \to \R^{n - 1}$ for orthogonal projection onto the
first $(n - 1)$ coordinates, and write $a' = (\pi(a), 0) = (a_1, \ldots,
a_{n - 1}, 0)$.  Then $Z(a, b) = Z(a, a') Z(a', b)$ for $b \in A_+$,
so~\eqref{eq:ell1-int-1} is equivalent to
\[
\int_{\R^n} Z(a', b) \dee\mu_{A_+}(b)
=
\int_{\R^n} Z(a', b) \dee\mu_{A_- \cap A_+}(b).
\]
We analyze each side in turn.  First, $A_+ = (\pi A_+) \times [0, 1]$,
so it follows from Proposition \ref{T:weight-measure-product}
that $\mu_{A_+} = \mu_{\pi A_+} \otimes \mu_{[0, 1]}$.  Using this and
the fact that $\mu_{[0, 1]}$ is a weight measure on $[0, 1]$, we find
that the left-hand side is equal to
\begin{equation}
\label{eq:int-lhs}
\int_{\R^{n - 1}} Z(\pi(a), c) \dee\mu_{\pi A_+}(c).
\end{equation}
Next, $\mu_{A_- \cap A_+}$ is supported on $\R^{n - 1} \times \{0\}$,
and $\pi(A_- \cap A_+) = \pi A_- \cap \pi A_+$ (by Corollary~2.5 of
\cite{IGON}), which together imply that the right-hand side is equal
to
\begin{equation}
\label{eq:int-rhs}
\int_{\R^{n - 1}} Z(\pi(a), c) \dee\mu_{\pi A_- \cap \pi A_+}(c).
\end{equation}
Hence it suffices to show that the integrals~\eqref{eq:int-lhs}
and~\eqref{eq:int-rhs} are equal.  Since $\mu_{\pi A} = \mu_{\pi A_-} +
\mu_{\pi A_+} - \mu_{\pi A_- \cap \pi A_+}$, an equivalent statement is
that 
\begin{equation}
\label{eq:ell1-int-2}
\int_{\R^{n - 1}} Z(\pi(a), c) \dee\mu_{\pi A}(c)
=
\int_{\R^{n - 1}} Z(\pi(a), c) \dee\mu_{\pi A_-}(c).
\end{equation}
But $\pi A$ and $\pi A_-$ are 1-pixelated sets of dimension $n - 1$,
and are $\ell_1$-convex (by Corollary~1.12 of \cite{IGON}), so our
inductive hypothesis implies that $\mu_{\pi A}$ and $\mu_{\pi A_-}$
are weight measures on them.  Since
$\pi(a) \in \pi A_- \subseteq \pi A$, both sides
of~\eqref{eq:ell1-int-2} are equal to $1$, completing the proof.
\end{proof}

Our proof of Theorem~\ref{thm:ell1-cvx} rests on the following
result: 

\begin{prop}
\label{propn:ell1-cvx-pix}
$\mg{A} = \sum_{i = 0}^n 2^{-i} V'_i(A)$ for all pixelated
  $\ell_1$-convex sets $A \subseteq \ell_1^n$. 
\end{prop}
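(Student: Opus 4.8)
The plan is to use the weight measure furnished by Proposition~\ref{propn:ell1-wtg} to replace magnitude by a signed measure, and then to match that measure against the $\ell_1$-intrinsic volumes by an inclusion--exclusion argument over a decomposition of $A$ into pixels, reducing everything to the box formula~\eqref{eq:mag-ell1-box}. I describe the argument for $1$-pixelated $A$; the general $\lambda$-pixelated case is identical after rescaling, and~\eqref{eq:mag-ell1-box} holds for boxes of every size in any case.

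First, since $\ell_1^n$ is of negative type (Theorem~\ref{T:pd-examples}) and $A$ is compact, $A$ is a compact positive definite space; so by the definition of magnitude via weight measures, by Proposition~\ref{propn:ell1-wtg}, and by $\supp \mu_A \subseteq A$, we have $\mg{A} = \mu_A(A) = \mu_A(\R^n)$. It therefore suffices to show that $\mu_A(\R^n) = \sum_{i=0}^n 2^{-i} V'_i(A)$. Next, I would write $A = P_1 \cup \cdots \cup P_m$ as a union of pixels. The finite unions of subpixels form a lattice, being closed under unions and (since the intersection of two subpixels is a subpixel or empty) under intersections. As $\mu_\bullet$ is a valuation on this lattice (Lemma~\ref{lemma:subpixelated-extension}) with $\mu_\emptyset = 0$, the full inclusion--exclusion identity
\[
\mu_A(\R^n) = \sum_{\emptyset \neq S \subseteq \{1,\dots,m\}} (-1)^{\abs{S}+1}\, \mu_{\bigcap_{j \in S} P_j}(\R^n)
\]
holds by the standard induction (every intermediate union again lies in the lattice). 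Each $\bigcap_{j \in S} P_j$ is a subpixel, hence a box, and for a box $B$ we have $\mu_B(\R^n) = \mg{B} = \sum_{i=0}^n 2^{-i} V'_i(B)$ by~\eqref{eq:mag-ell1-box}. Substituting and interchanging the two sums gives
\[
\mu_A(\R^n) = \sum_{i=0}^n 2^{-i} \sum_{\emptyset \neq S} (-1)^{\abs{S}+1}\, V'_i\Bigl(\bigcap_{j \in S} P_j\Bigr).
\]

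It then remains to identify the inner sum with $V'_i(A)$, i.e.\ to establish inclusion--exclusion for $V'_i$ along $A = \bigcup_j P_j$; this is the main obstacle. The difficulty is that, in contrast to $\mu_\bullet$, the $\ell_1$-intrinsic volume $V'_i$ is defined only on $\ell_1$-convex sets: although $A$ and every intersection $\bigcap_{j \in S} P_j$ is $\ell_1$-convex (each being a box), the partial unions $P_1 \cup \cdots \cup P_k$ generally are not, so the naive induction used for $\mu_\bullet$ is unavailable. I would resolve this by invoking that $V'_i$, being a valuation on the $\ell_1$-convex sets, extends by Groemer's theorem (Theorem~6.2.1 of \cite{SchnCBB}, already used in Lemma~\ref{lemma:subpixelated-extension}) to a valuation on the lattice they generate; inclusion--exclusion holds automatically for this extension, and it agrees with $V'_i$ on the $\ell_1$-convex set $A$. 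This yields $\sum_{\emptyset \neq S}(-1)^{\abs{S}+1} V'_i(\bigcap_{j \in S} P_j) = V'_i(A)$, and hence $\mg{A} = \sum_{i=0}^n 2^{-i} V'_i(A)$.
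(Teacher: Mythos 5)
Your proposal follows the paper's proof step for step: magnitude is converted to $\mu_A(\R^n)$ via Propositions~\ref{T:weight-measure-mag} and~\ref{propn:ell1-wtg}, inclusion--exclusion for $\mu_\bullet$ over the pixel decomposition reduces everything to boxes, and the box formula~\eqref{eq:mag-ell1-box} converts each term to $W = \sum_i 2^{-i}V'_i$. You have also correctly isolated the real crux, which the paper passes over in the phrase ``the valuation property of $W$'': one must know that $\sum_{\emptyset\neq S}(-1)^{\abs{S}+1}V'_i\bigl(\bigcap_{j\in S}P_j\bigr) = V'_i(A)$, and the naive induction is unavailable because partial unions of pixels need not be $\ell_1$-convex.

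However, your proposed repair of that step does not work as stated. Groemer's extension theorem, in the form used in Lemma~\ref{lemma:subpixelated-extension}, applies to a function on an \emph{intersectional} family of sets, and the compact $\ell_1$-convex sets are not intersectional: in $\ell_1^2$ the two \textsf{L}-shapes $([0,3]\times[0,1])\cup([0,1]\times[0,3])$ and $([0,3]\times[2,3])\cup([2,3]\times[0,3])$ are each $\ell_1$-convex, but their intersection $([2,3]\times[0,1])\cup([0,1]\times[2,3])$ is disconnected, hence not $\ell_1$-convex. So there is no ``lattice generated by the $\ell_1$-convex sets'' to which Groemer's theorem applies directly, and the required \emph{full} additivity of $V'_i$ on that class is in any case more than the quoted ``finitely additive''. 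If instead you apply Groemer to the intersectional family of subpixels (or boxes) --- which is exactly what Lemma~\ref{lemma:subpixelated-extension} does for $\mu_\bullet$ --- you obtain a valuation on finite unions of subpixels that agrees with $V'_i$ on boxes; but the assertion that this extension agrees with $V'_i$ at the $\ell_1$-convex set $A$ is precisely the inclusion--exclusion identity you are trying to prove, so the argument becomes circular at the very point you flagged as the main obstacle. The missing input has to come from the integral geometry of \cite{IGON}: one needs full additivity of the $\ell_1$-intrinsic volumes over decompositions of an $\ell_1$-convex set into boxes (for instance via the projection formula for $V'_i$ together with the compatibility of coordinate projections with intersections of $\ell_1$-convex sets, cf.\ Corollary~2.5 of \cite{IGON} as used in the proof of Proposition~\ref{propn:ell1-wtg}). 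That is what the paper's appeal to ``the valuation property of $W$'' is standing in for, and your write-up should cite or prove that property rather than derive it from Groemer's theorem.
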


\begin{proof}
  Assume that $A$ is 1-pixelated, and write $A$ as a union
  $\bigcup_{j = 1}^m B_j$ of pixels.  Also write
  $W = \sum_{i = 0}^n 2^{-i} V'_i$; then $\mg{B} = W(B)$ whenever
  $B$ is a box or the empty set.
  Propositions~\ref{T:weight-measure-mag} and~\ref{propn:ell1-wtg}
  together with the valuation property of $W$ give
\begin{align*}
\mg{A}  
=
\mu_A(\R^n)   &  
=
\sum_{k \geq 0} (-1)^k 
\sum_{1 \leq j_0 < \cdots < j_k \leq m} 
\mu_{B_{j_0} \cap \cdots \cap B_{j_k}} (\R^n)   \\
&
=
\sum_{k \geq 0} (-1)^k 
\sum_{1 \leq j_0 < \cdots < j_k \leq m} 
\mg{B_{j_0} \cap \cdots \cap B_{j_k}}   \\
&
=
\sum_{k \geq 0} (-1)^k 
\sum_{1 \leq j_0 < \cdots < j_k \leq m} 
W(B_{j_0} \cap \cdots \cap B_{j_k})   
=
W(A),
\end{align*}
as required.
\end{proof}

\begin{proof}[Proof of Theorem~\ref{thm:ell1-cvx}]
  For part~(\ref{part:ell1-cvx-ineq}), let $A \subseteq \ell_1^n$ be a
  compact $\ell_1$-convex set.  For each $\lambda > 0$, let
  $A_\lambda$ be the smallest $\lambda$-pixelated set containing $A$.
  Then $A_\lambda$ is $\ell_1$-convex (by Proposition~3.1 of
  \cite{IGON}), and $A_\lambda \to A$ in the Hausdorff metric as
  $\lambda \to 0$.  The result now follows from
  Proposition~\ref{propn:ell1-cvx-pix}, continuity of the
  $\ell_1$-intrinsic volumes, and the monotonicity of magnitude
  (Proposition \ref{T:compact-monotone}).

  For~(\ref{part:ell1-cvx-bodies}), let $A \subseteq \ell_1^n$ be a
  compact convex set with $0$ in its interior.  Given
  $\varepsilon > 0$, we can choose $\alpha < 1$ such that
  $d_H(\alpha A, A) < \varepsilon$.  But by convexity, $\alpha A$ is a
  subset of the interior of $A$, so we can choose $\lambda > 0$ such
  that $\alpha A_\lambda \subseteq A$.  Thus, we have a pixelated
  $\ell_1$-convex subset $B = \alpha A_\lambda$ of $A$ satisfying
  $d_H(B, A) < \varepsilon$.  Arguing as in
  part~(\ref{part:ell1-cvx-ineq}) but approximating from the inside
  rather than the outside, we obtain the opposite inequality
  $\mg{A} \geq \sum \frac{V'_i(A)}{2^i}$.  (Alternatively, use
  Theorem~\ref{T:star-continuity} below.)

  For~(\ref{part:ell1-cvx-two}), the only nontrivial case remaining is
  that of a line segment, which is straightforward.
\end{proof}

\subsection{The Fourier-analytic perspective}
\label{S:Fourier}

In the real line, the study of magnitude is facilitated by the order
structure of $\R$; in $\ell_1^n$ we can exploit the algebraic
structure of $\ell_1$ products.  In general normed spaces the most
obvious special feature is translation-invariance.  It will therefore
come as no surprise that Fourier analysis is our key tool in that
setting. This approach was developed in \cite{MeckMDC} for $\ell_2^n$,
but with some additional effort we can work not only with more general
norms but with the broader class of $p$-(quasi)norms for $0 < p \le 1$.

Let $0 < p \le 1$.  A \dfn{$p$-norm} on a real vector space $V$ is a
function $\norm{\cdot} : V \to \R$ such that
\begin{itemize}
\item $\norm{v} \ge 0$ for every $v \in V$, with equality only if $v =
  0$;
\item $\norm{tv} = \abs{t} \norm{v}$ for every $t \in \R$ and $v \in
  V$;
\item $\norm{v+w}^p \le \norm{v}^p + \norm{w}^p$ for every $v, w \in V$.
\end{itemize}
Thus a $1$-normed space is simply a normed space. A principal example
of a $p$-normed space for $p < 1$ is $L_p[0,1]$ with
$\norm{f} = \bigl(\int_0^1 \abs{f(x)}^p \ dx\bigr)^{1/p}$.

If $(V,\norm{\cdot})$ is a $p$-normed space, then
$d_p(v,w) = \norm{v-w}^p$ is a metric on $V$. Conversely, if $d$ is
any translation-invariant, symmetric, positively homogeneous metric on
a real vector space $V$, then $\norm{v} = d(v,0)$ defines a $p$-norm
on $V$, where $p \in (0,1]$ is the degree of homogeneity of $d$.

The following classical result, which goes back to L\'evy \cite{Levy}
(see also \cite[Theorem 6.6]{Koldobsky}), identifies which
finite-dimensional $p$-normed spaces are positive definite metric
spaces (and hence, by homogeneity, of negative type).

\begin{thm}
  \label{T:pd-p-norms}
  Let $0 < p \le 1$, let $\norm{\cdot}$ be a $p$-norm on $\R^n$, and
  equip $\R^n$ with the metric $d_p(x,y) = \norm{x-y}^p$. Then
  $(\R^n, d_p)$ is a positive definite metric space if and only if
  there is linear map $T:\R^n \to L_p[0,1]$ such that
  $\norm{Tx}_p = \norm{x}$ for every $x \in \R^n$.
\end{thm}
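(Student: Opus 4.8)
The plan is to strip the metric language down to a single analytic condition and then recognize that condition as the defining property of a symmetric $p$-stable law, whose Lévy spectral representation \emph{is} an isometric embedding into $L_p$; this is the classical route going back to Lévy. First I would reduce as follows. Since $d_p$ is homogeneous of degree $p$ and $\R^n$ is invariant under dilations, the map $a \mapsto t^{1/p} a$ shows that $t\, d_p(a,b) = d_p(t^{1/p}a, t^{1/p}b)$; hence if $(\R^n, d_p)$ is positive definite then $[e^{-t\norm{a-b}^p}]_{a,b \in A}$ is positive semidefinite for \emph{every} $t > 0$ and every finite $A$, i.e.\ $(\R^n,d_p)$ is of negative type. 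By translation invariance and Bochner's theorem, this is equivalent to the single statement that $\phi(x) = e^{-\norm{x}^p}$ is a positive definite function on $\R^n$. It therefore suffices to prove that $\phi$ is positive definite if and only if the desired linear isometry $T \colon \R^n \to L_p[0,1]$ exists.

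For the ``if'' direction, suppose $\norm{Tx}_p = \norm{x}$, so that $\norm{x}^p = \int_0^1 \abs{Tx(s)}^p \dee s$. I would invoke the existence, for $0 < p \le 2$, of a symmetric $p$-stable random variable $\theta$ with $\E e^{ia\theta} = e^{-\abs{a}^p}$, and assemble from it a symmetric random vector $\Theta \in \R^n$ with characteristic function $\E e^{i\inprod{x}{\Theta}} = e^{-\norm{x}^p}$ --- for instance by discretizing the integral into a Riemann sum $\sum_k \abs{Tx(s_k)}^p \Delta_k$, representing each factor $e^{-\Delta_k \abs{Tx(s_k)}^p}$ through an independent copy of $\theta$, and passing to the limit. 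Then
\[
e^{-\norm{x-y}^p} = \E\bigl[ e^{i\inprod{x}{\Theta}} \, \overline{e^{i\inprod{y}{\Theta}}} \bigr]
\]
exhibits the kernel as an average of rank-one positive kernels, hence positive definite. (Equivalently, $T$ realizes $(\R^n, d_p)$ as a subspace of $L_p[0,1]$ under the metric $\norm{f-g}_p^p$, so by Proposition~\ref{T:finite-pd-operations} the result would follow once one knows the ambient space is positive definite --- which is the same $p$-stable fact.)

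The ``only if'' direction is the crux. From positive definiteness of $e^{-t\norm{\cdot}^p}$ for all $t > 0$, Bochner's theorem produces symmetric probability measures $\nu_t$ with $\widehat{\nu_t} = e^{-t\norm{\cdot}^p}$, and these form a convolution semigroup; equivalently, $\psi(x) = \norm{x}^p$ is a continuous, symmetric, negative definite function. I would then apply the Lévy--Khinchine representation
\[
\psi(x) = \inprod{x}{Qx} + \int_{\R^n \setminus \{0\}} \bigl(1 - \cos\inprod{x}{u}\bigr) \dee\Lambda(u),
\]
with $Q \ge 0$ and $\Lambda$ a symmetric Lévy measure. Homogeneity $\psi(rx) = r^p \psi(x)$ with $p < 2$ forces the quadratic part $Q$ to vanish and forces $\Lambda$ into polar form $\dee\Lambda = r^{-1-p}\dee r \, \dee\Gamma(\theta)$ for a finite symmetric measure $\Gamma$ on $S^{n-1}$; evaluating $\int_0^\infty (1 - \cos(rs)) r^{-1-p}\dee r = C_p \abs{s}^p$ and absorbing $C_p$ into $\Gamma$ yields
\[
\norm{x}^p = \int_{S^{n-1}} \abs{\inprod{x}{\theta}}^p \dee\Gamma(\theta).
\]
Defining $(Tx)(\theta) = \inprod{x}{\theta}$ gives a linear map into $L_p(S^{n-1}, \Gamma)$ with $\norm{Tx}_p = \norm{x}$, and since $\Gamma$ is finite, $L_p(S^{n-1},\Gamma)$ embeds isometrically into $L_p[0,1]$ by a standard measure-isomorphism argument, producing the required $T$.

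The main obstacle is precisely this spectral representation: extracting $\norm{x}^p = \int \abs{\inprod{x}{\theta}}^p \dee\Gamma(\theta)$ from mere positive definiteness, which is the substantive content of the theory of $p$-stable distributions (the Lévy--Khinchine formula together with the homogeneity reduction killing the Gaussian part and pinning down the radial profile of $\Lambda$). The remaining ingredients --- Bochner's theorem, the existence of symmetric $p$-stable variables, and the embedding of a separable $L_p$ space into $L_p[0,1]$ --- are standard, and I would cite rather than reprove them.
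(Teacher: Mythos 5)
The paper does not actually prove Theorem~\ref{T:pd-p-norms}: it is stated as a classical result and attributed to L\'evy, with \cite[Theorem 6.6]{Koldobsky} given as a reference. Your argument is essentially the standard classical proof of that cited result --- Schoenberg/Bochner to reduce to positive definiteness of $e^{-\norm{\cdot}^p}$, L\'evy--Khinchine plus homogeneity to kill the Gaussian part and force the L\'evy measure into polar form $r^{-1-p}\,dr\,d\Gamma(\theta)$, yielding the spectral representation $\norm{x}^p = \int_{S^{n-1}}\abs{\inprod{x}{\theta}}^p\,d\Gamma(\theta)$ and hence the embedding; and conversely the construction of a symmetric $p$-stable vector with characteristic function $e^{-\norm{x}^p}$ from the embedding. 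That outline is sound and is exactly the route the cited sources take (indeed the paper itself quotes the one-dimensional spectral representation, via \cite[Lemma 6.4]{Koldobsky}, in the proof of Proposition~\ref{T:nice-norms}).

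The one point that needs repair is the distinction between positive \emph{semi}definite and positive definite. The paper's definition requires the matrices $Z_A=[e^{-d_p(a,b)}]_{a,b\in A}$ to be strictly positive definite for every finite $A$, whereas Bochner's theorem and your rank-one averaging argument deliver only positive semidefiniteness; your sentence asserting that positive definiteness of the space ``is equivalent to'' $e^{-\norm{\cdot}^p}$ being a positive definite function conflates the two notions. The fix is standard but should be said: in the ``if'' direction, the representing measure $\nu$ with $\widehat{\nu}=e^{-\norm{\cdot}^p}$ is a full (non-degenerate) stable law, hence absolutely continuous with a.e.\ positive density, so the characters $e^{i\inprod{x_j}{\cdot}}$ at distinct points $x_j$ are linearly independent in $L^2(\nu)$ and the Gram matrix $Z_A$ is strictly positive definite. (Alternatively, one can invoke the fact, cited in section~\ref{S:pd} of the paper as \cite[Theorem 3.3]{MeckPDM}, that negative type already upgrades to strict positive definiteness of every $Z_{tA}$.) With that addendum, and keeping the symmetric form of the L\'evy--Khinchine integrand (legitimate here since $\norm{\cdot}$ is even, so the L\'evy measure is symmetric and no compensator is needed even at $p=1$), your proof is complete; the remaining ingredients you propose to cite --- existence of symmetric $p$-stable variables, uniqueness in L\'evy--Khinchine, and the isometric embedding of $L_p$ of a finite separable measure into $L_p[0,1]$ --- are indeed standard.
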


Theorem~\ref{T:pd-p-norms} implies in particular that $L_p[0,1]$ and
$\ell_p^n$ are positive definite with the metric $d_p$ for $0 < p \le
1$.  We recall from Theorem \ref{T:pd-examples} that $L_q[0,1]$ and
$\ell_q^n$ are also positive definite, with the usual metric, for
$1\le q \le 2$.

To simplify the statements of results:
\begin{quote}
  For the rest of this section, $\norm{\cdot}$ will always
  denote a $p$-norm on $\R^n$ such that $(\R^n, d_p)$ is a positive
  definite metric space.
\end{quote}
We will make use of the function $F_p : \R^n \to \R$ defined by
$F_p(x) = e^{-\norm{x}^p}$, and denote by
$\mathcal{B} = \Set{x \in \R^n}{\norm{x} = 1}$ the unit ball of
$\norm{\cdot}$. For $f \in L_1(\R^n)$, we adopt the convention that
the Fourier transform of $f$ is given by
$\widehat{f}(x) = \int_{\R^n} f(y) e^{-2\pi i \inprod{x}{y}} \ dy$.

A key observation is that $F_p$ is the Fourier transform of a
$p$-stable probability distribution. Proposition \ref{T:nice-norms}
collects some crucial facts which follow from results from the
literature on stable random processes.

\begin{prop}
  \label{T:nice-norms}
  ~
  \begin{enumerate}
  \item \label{P:lower-bound} There is a constant $c > 0$ (depending
    on the $p$-norm $\norm{\cdot}$) such that $\widehat{F_p}(x) \ge c
    (1+\norm{x}_2)^{-(1+p)n}$ for every $x \in \R^n$.
  \item \label{P:monotone} For each $x \in \R^n$, $\widehat{F_p}(tx)$
    is nonincreasing as a function of $t \ge 0$. In particular,
    $\bigl\Vert\widehat{F_p}\bigr\Vert_\infty = \widehat{F_p}(0) =
    \Gamma\bigl(\frac{n}{p} + 1\bigr) \vol \mathcal{B}$.
  \end{enumerate}
\end{prop}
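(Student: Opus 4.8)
The plan is to recognise $\widehat{F_p}$ as the probability density of a symmetric $p$-stable distribution and to read both statements off known properties of such densities. By Theorem~\ref{T:pd-p-norms} there is a linear map $T\colon\R^n\to L_p[0,1]$ with $\norm{Tx}_p=\norm{x}$; writing $(Tx)(s)=\inprod{x}{\varphi(s)}$ gives $\norm{x}^p=\int_0^1\abs{\inprod{x}{\varphi(s)}}^p\,ds=\int\abs{\inprod{x}{\theta}}^p\,d\Lambda(\theta)$ for a finite symmetric ``spectral'' measure $\Lambda$ whose support spans $\R^n$. Thus $\norm{\cdot}^p$ is a continuous negative-definite function, so $F_p=e^{-\norm{\cdot}^p}$ is positive definite and, by Bochner's theorem, is the characteristic function of a symmetric $p$-stable probability measure $\mu$. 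Since $0<p<2$ and $\Lambda$ spans, $\mu$ has a smooth, strictly positive density $g_p$, and Fourier inversion together with the evenness of $F_p$ gives $\widehat{F_p}=g_p\ge0$. Both assertions are therefore statements about $g_p$.

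For the value at the origin and the monotonicity, first compute $\widehat{F_p}(0)=\int_{\R^n}e^{-\norm{x}^p}\,dx$. Since $\vol\{\,x:\norm{x}\le r\,\}=r^n\vol\mathcal{B}$, integrating in the radial variable of $\norm{\cdot}$ gives
\[
\widehat{F_p}(0)=\int_0^\infty e^{-r^p}\,n r^{n-1}\vol\mathcal{B}\,dr=\Gamma\Bigl(\tfrac{n}{p}+1\Bigr)\vol\mathcal{B}
\]
after the substitution $u=r^p$. For the monotonicity I would invoke the fact that a symmetric $p$-stable law, being self-decomposable, has a density that is nonincreasing along every ray from the origin (star-unimodal about its centre); this is exactly the statement that $\widehat{F_p}(tx)$ is nonincreasing in $t\ge0$, and combined with the computation of $\widehat{F_p}(0)$ it yields $\bigl\Vert\widehat{F_p}\bigr\Vert_\infty=\widehat{F_p}(0)$. (In the special case where $\Lambda$ sits on $n$ independent directions the law factors into independent one-dimensional symmetric $p$-stable coordinates, and star-unimodality is visible directly as a product of one-dimensional unimodal densities composed with linear forms; pinning down the precise multivariate unimodality result in general is the delicate point here.)

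The lower bound in part~(\ref{P:lower-bound}) is the main obstacle. For bounded $\norm{x}_2$ it is immediate, since $g_p$ is continuous and strictly positive. The content is thus the tail $\norm{x}_2\to\infty$, and the exponent $n(1+p)$ --- rather than the sharp isotropic rate $n+p$ --- is forced by possible concentration of $\Lambda$: if $\Lambda$ is carried by $n$ independent directions the coordinates are independent one-dimensional symmetric $p$-stable variables with densities decaying like $\abs{u}^{-(1+p)}$, so $g_p(x)$ behaves like $\norm{x}_2^{-n(1+p)}$ in a generic direction, and this is the worst case. For general $\Lambda$ the plan is to split $\Lambda=\Lambda'+\Lambda''$, taking $\Lambda'$ to be a small multiple of the restriction of $\Lambda$ to caps about $n$ independent directions (possible because $\supp\Lambda$ spans), so that $g_p=g_p^{\Lambda'}*g_p^{\Lambda''}$ with both factors probability densities; bounding $g_p(x)\ge\int_{\norm{y}_2\le1}g_p^{\Lambda'}(x-y)\,g_p^{\Lambda''}(y)\,dy$ then reduces matters to a lower estimate for the anisotropic density $g_p^{\Lambda'}$, uniform in direction. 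Establishing this last estimate is the crux, and I would obtain it from the known two-sided bounds for densities of stable laws with prescribed spectral measure.
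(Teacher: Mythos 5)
Your overall strategy is exactly the paper's: identify $\widehat{F_p}$ via Theorem~\ref{T:pd-p-norms} and Bochner's theorem as the (smooth, positive) density of a symmetric $p$-stable law whose spectral measure spans $\R^n$, compute $\widehat{F_p}(0)=\int e^{-\norm{x}^p}\,dx=\Gamma(\frac{n}{p}+1)\vol\mathcal{B}$ by radial integration, and then read both assertions off known properties of such densities. The setup and the computation at the origin are fine. The problem is that at both of the two substantive points you defer to results you do not actually pin down, and in one case the principle you invoke is not a theorem. For the monotonicity, ``self-decomposable implies the density is nonincreasing along rays'' is true in dimension one (Yamazato) but is not available as a general multivariate statement; star-unimodality of self-decomposable laws on $\R^n$ is precisely the kind of assertion that fails to follow from the one-dimensional theory. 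What is actually needed, and what the paper uses, is Kanter's theorem that symmetric stable distributions on $\R^n$ are unimodal in his specific (mixture) sense, combined with the Olshen--Savage result that this form of $n$-unimodality implies monotonicity of the density along every ray from the origin. You flag this as ``the delicate point,'' but flagging it does not close it: the conclusion is true for stable laws for reasons particular to stability, not because of self-decomposability.

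For the lower bound, your heuristic for the exponent $(1+p)n$ (the product of $n$ independent one-dimensional stable marginals, each with tail $\abs{u}^{-(1+p)}$) is the right picture, and the convolution splitting $g_p=g_p^{\Lambda'}*g_p^{\Lambda''}$ is legitimate. But the splitting does not reduce the difficulty: since $\Lambda$ need not have atoms, your $\Lambda'$ is supported on caps rather than on $n$ exact directions, so $g_p^{\Lambda'}$ is not a product of one-dimensional densities, and the direction-uniform lower bound on $g_p^{\Lambda'}$ with exponent $(1+p)n$ that you call ``the crux'' is exactly as hard as the original statement. The paper dispenses with the detour entirely by applying Watanabe's two-sided density estimates (Theorem 1.1(iii) of that paper) directly to $\mu$, using only that the spherical part $\sigma$ of its L\'evy measure is symmetric with support spanning $\R^n$ --- which follows from L\'evy's representation $\norm{x}^p=\int_{S^{n-1}}\abs{\inprod{x}{\theta}}^p\,d\sigma(\theta)$ and nondegeneracy of the norm. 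So your proposal is the right skeleton, but both load-bearing steps remain open as written.
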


\begin{proof}
  It follows from Theorem \ref{T:pd-p-norms} and Bochner's theorem
  that $\widehat{F_p}$ is the density of a $p$-stable distribution
  $\mu$ on $\R^n$.
  
  By a theorem of L\'evy (see \cite[Lemma 6.4]{Koldobsky}), there is a
  symmetric measure $\sigma$ on $S^{n-1}$ such that
  \[
  \norm{x}^p = \int_{S^{n-1}} \abs{\inprod{x}{\theta}}^p \
  d\sigma(\theta);
  \]
  since $\norm{x} \neq 0$ for $x \neq 0$, the support of $\sigma$ is
  not contained in any proper subspace of $\R^n$.  Then $\sigma$ is a
  positive scalar multiple of the spherical part of the L\'evy measure
  of $\mu$ (cf.\ \cite[Section 14]{Sato}). Since $\sigma$ is symmetric
  and not supported in a proper subspace of $\R^n$, the linear span of
  its support is all of $\R^n$, and \cite[Theorem 1.1(iii)]{Watanabe}
  then implies the first claim.

  Corollary 4.2 of \cite{Kanter} implies that every symmetric stable
  distribution on $\R^n$ is \emph{unimodal} in the sense defined in
  \cite{Kanter} and hence \emph{$n$-unimodal} in the sense defined in
  \cite{OlSa} (see discussion on p.\ 80 and p.\ 84 of
  \cite{Kanter}). The second claim then follows from \cite[Theorem
  6]{OlSa}.
\end{proof}

As in section \ref{S:compact-pd}, for a finite set $B \subseteq \R^n$
and $w \in \R^B$, we write $f_w(x) = \sum_{b \in B} w_b
F_p(x-b)$.
Recall that the RKHS $\mathcal{H}$ of the metric space $(\R^n, d_p)$
is the completion of the span of such functions $f_w$ with respect to
the norm given by
\[
\norm{f_w}_{\mathcal{H}}^2 = \sum_{a, b \in B} w_a
w_b F_p(a-b) = \int_{\R^n} \widehat{F_p}(x) \abs{\sum_{b
    \in B} w_b e^{2\pi i\inprod{x}{b}}}^2 \ dx
= \int_{\R^n} \frac{1}{\widehat{F_p}(x)} \abs{\widehat{f_w}(x)}^2 \ dx.
\]
Observe that the Fourier inversion theorem may be used here since
$\widehat{F_p}$ is the density of a random variable, hence integrable.

From here, standard arguments imply the following.

\begin{prop}
  \label{T:p-norm-RKHS}
  The RKHS of $(\R^n, d_p)$ is
  \[
  \mathcal{H} = \Set{ f \in L_2(\R^n)}{\int_{\R^n}
    \frac{1}{\widehat{F_p}(x)} \abs{\widehat{f}(x)}^2 \ dx < \infty},
  \]
  with norm given by
  \[
  \norm{f}_{\mathcal{H}}^2 = \int_{\R^n} \frac{1}{\widehat{F_p}(x)}
  \abs{\widehat{f}(x)}^2 \ dx.
  \]
  The Schwartz space $\mathcal{S}(\R^n)$ is contained in
  $\mathcal{H}$.
\end{prop}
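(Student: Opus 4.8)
The plan is to realize the abstract completion $\mathcal{H}$ concretely as the weighted $L_2$ space in the statement, using the Fourier transform to diagonalize the inner product. Write $\mathcal{K}$ for the proposed space, that is, the set of $f \in L_2(\R^n)$ with $\int_{\R^n} \widehat{F_p}(x)^{-1} \abs{\widehat{f}(x)}^2 \, dx < \infty$, with inner product $\inprod{f}{g}_{\mathcal{K}} = \int_{\R^n} \widehat{F_p}(x)^{-1} \widehat{f}(x) \overline{\widehat{g}(x)} \, dx$. The first thing to record is that $\widehat{F_p}$ is a genuine continuous strictly positive probability density: it is real and even because $F_p$ is, it integrates to $F_p(0) = 1$, it is bounded above by $\widehat{F_p}(0)$ by Proposition \ref{T:nice-norms}(\ref{P:monotone}), and it is bounded below by the strictly positive function of Proposition \ref{T:nice-norms}(\ref{P:lower-bound}). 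Hence the weight $1/\widehat{F_p}$ is bounded below by $1/\widehat{F_p}(0)$, so $\mathcal{K}$ is the preimage under the (unitary) Fourier transform of the weighted space $L_2(\R^n, \widehat{F_p}(x)^{-1}\,dx)$, which is complete, and the lower bound on the weight shows $\mathcal{K} \subseteq L_2(\R^n)$. Finally, the displayed computation preceding the statement already shows that on the pre-Hilbert space $\Set{f_w}{w \in \R^B, B \subseteq \R^n \text{ finite}}$ the $\mathcal{H}$-norm coincides with the $\mathcal{K}$-norm.

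The crux is then to show both that the span of the translates $F_p(\cdot - b)$ is dense in $\mathcal{K}$ and that $\mathcal{K}$ is realized as an honest function space; both follow from one calculation. Let $g \in \mathcal{K}$; since $\widehat{F_p}$ is a probability density, Cauchy--Schwarz gives
\[
\int_{\R^n} \abs{\widehat{g}(x)} \, dx \le \left(\int_{\R^n} \frac{\abs{\widehat{g}(x)}^2}{\widehat{F_p}(x)} \, dx\right)^{1/2} \left(\int_{\R^n} \widehat{F_p}(x) \, dx\right)^{1/2} = \norm{g}_{\mathcal{K}},
\]
so $\widehat{g} \in L_1(\R^n)$ and Fourier inversion applies to $g$. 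Using that $\widehat{F_p}$ is real and that the Fourier transform of $F_p(\cdot - b)$ is $e^{-2\pi i \inprod{x}{b}} \widehat{F_p}(x)$, the weight cancels and one finds $\inprod{g}{F_p(\cdot - b)}_{\mathcal{K}} = \int_{\R^n} \widehat{g}(x) e^{2\pi i \inprod{x}{b}} \, dx = g(b)$ for every $b \in \R^n$. This is the reproducing identity: evaluation at $b$ is the bounded functional $\inprod{\,\cdot\,}{F_p(\cdot - b)}_{\mathcal{K}}$, so $\mathcal{K}$ is a reproducing kernel Hilbert space whose kernel is $(a,b) \mapsto F_p(a - b) = e^{-d_p(a,b)}$; and any $g$ orthogonal to every $F_p(\cdot - b)$ must vanish identically, which is exactly the desired density.

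With these pieces in place I would conclude by invoking uniqueness of the reproducing kernel Hilbert space attached to a positive definite kernel (Moore--Aronszajn): since $\mathcal{K}$ is an RKHS with the same kernel $e^{-d_p(a,b)}$ that defines $\mathcal{H}$, the two spaces coincide, and the norm identity on the dense span extends by continuity to all of $\mathcal{H} = \mathcal{K}$. (Equivalently, the isometric inclusion of the span of translates into $\mathcal{K}$ extends to an isometry $\mathcal{H} \to \mathcal{K}$ that is onto by density.) For the last assertion, if $f \in \mathcal{S}(\R^n)$ then $\widehat{f} \in \mathcal{S}(\R^n)$ decays faster than any polynomial, while Proposition \ref{T:nice-norms}(\ref{P:lower-bound}) bounds $1/\widehat{F_p}(x)$ above by a constant multiple of $(1 + \norm{x}_2)^{(1+p)n}$, so $\int_{\R^n} \widehat{F_p}(x)^{-1} \abs{\widehat{f}(x)}^2 \, dx < \infty$ and $f \in \mathcal{K}$. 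I expect the main obstacle to be precisely the concrete identification of the abstract completion $\mathcal{H}$ with $\mathcal{K}$ --- verifying that limits of the $f_w$ are represented by bona fide functions rather than merely abstract vectors. The $L_1$-bound on $\widehat{g}$ coming from $\int \widehat{F_p} = 1$ is what makes this work, since it legitimizes Fourier inversion and hence the reproducing identity; everything else (completeness of the weighted $L_2$ space and the Schwartz inclusion) is routine once the correct weight is identified.
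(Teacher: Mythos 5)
Your proof is correct, and it follows exactly the route the paper intends: the displayed computation of $\norm{f_w}_{\mathcal{H}}^2$ preceding the statement is your starting point, and the paper simply declares the remaining steps (completeness of the weighted $L_2$ space, the $L_1$ bound on $\widehat{g}$ via $\int\widehat{F_p}=1$, the reproducing identity, density of the translates, and the Schwartz inclusion from Proposition \ref{T:nice-norms}) to be ``standard arguments.'' You have supplied those details accurately, so there is nothing to add.
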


The dual space of $\mathcal{H}$ is naturally identified with the space
of tempered distributions
\[
\Set{\varphi \in \mathcal{S}'(\R^n)}{ \widehat{\varphi} \in
  L_2\bigl(\widehat{F_p}(x)\ dx\bigr)}.
\]
Thus weightings for compact subsets of $(\R^n, d_p)$ can be identified
as tempered distributions satisfying a weak smoothness condition,
although we will not make use of this fact here.  Note that, since
$\widehat{F_p}$ is integrable, this space of distributions includes
all finite signed measures on $\R^n$, so that weight measures fit
gracefully into this perspective.

This concrete identification of the RKHS of $(\R^n, d_p)$, together
with Proposition \ref{T:nice-norms}, make it possible to use Fourier
analysis to prove a number of nice properties of magnitude in these
spaces, including the following fundamental fact.

\begin{prop}
  \label{T:p-norm-finite-mag}
  Let $A \subseteq (\R^n, d_p)$ be compact. Then 
  \[
  \frac{\vol A}{\Gamma \bigl(\frac{n}{p} + 1\bigr) \vol \mathcal{B}}
  \le \mg{A} < \infty.
  \]
\end{prop}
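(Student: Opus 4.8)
The plan is to combine the variational description of magnitude from Theorem~\ref{T:inf} with the explicit formula for the RKHS norm in Proposition~\ref{T:p-norm-RKHS} and the two facts recorded in Proposition~\ref{T:nice-norms}. Recall that Theorem~\ref{T:inf} asserts that $\mg{A} < \infty$ precisely when there is some $h \in \mathcal{H}$ with $h \equiv 1$ on $A$, and that in that case $\mg{A} = \inf \norm{h}_{\mathcal{H}}^2$ over all such $h$. So both halves of the statement will follow once I produce one admissible $h$ (for finiteness) and bound $\norm{h}_{\mathcal{H}}^2$ from below uniformly over all admissible $h$ (for the lower bound).

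For finiteness, I would first note that since $A$ is compact I can choose a function $h \in C_c^\infty(\R^n) \subseteq \mathcal{S}(\R^n)$ which is identically $1$ on a neighborhood of $A$; such bump functions exist by standard smoothing. Proposition~\ref{T:p-norm-RKHS} guarantees $\mathcal{S}(\R^n) \subseteq \mathcal{H}$, so $h \in \mathcal{H}$ and $h \equiv 1$ on $A$. Theorem~\ref{T:inf} then yields $\mg{A} < \infty$ and licenses the infimum formula that the lower bound relies on.

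For the lower bound, I would take an arbitrary $h \in \mathcal{H}$ with $h \equiv 1$ on $A$ and estimate, using the Fourier description of the norm from Proposition~\ref{T:p-norm-RKHS} together with part~\bref{P:monotone} of Proposition~\ref{T:nice-norms}, which gives $\norm{\widehat{F_p}}_\infty = \widehat{F_p}(0) = \Gamma\bigl(\frac{n}{p}+1\bigr)\vol\mathcal{B}$ and (since $\widehat{F_p} > 0$) the pointwise bound $1/\widehat{F_p}(x) \ge 1/\norm{\widehat{F_p}}_\infty$:
\[
\norm{h}_{\mathcal{H}}^2 = \int_{\R^n} \frac{1}{\widehat{F_p}(x)} \abs{\widehat{h}(x)}^2 \ dx \ge \frac{1}{\Gamma\bigl(\frac{n}{p}+1\bigr)\vol\mathcal{B}} \int_{\R^n} \abs{\widehat{h}(x)}^2 \ dx.
\]
By Plancherel's theorem the remaining integral equals $\int_{\R^n} \abs{h}^2$, and since $h \equiv 1$ on $A$ this is at least $\int_A 1 = \vol A$. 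Hence $\norm{h}_{\mathcal{H}}^2 \ge \vol A \big/ \bigl(\Gamma(\tfrac{n}{p}+1)\vol\mathcal{B}\bigr)$ for every admissible $h$, and taking the infimum gives the claimed lower bound.

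There is essentially no hard step here: both directions are short consequences of the machinery already assembled. The only points requiring a word of care are the existence of the bump function together with the Schwartz containment $\mathcal{S}(\R^n) \subseteq \mathcal{H}$, which makes finiteness immediate rather than requiring any estimate, and the observation that the single uniform bound $\widehat{F_p}(x) \le \norm{\widehat{F_p}}_\infty = \Gamma(\tfrac{n}{p}+1)\vol\mathcal{B}$ is exactly what converts the weighted $L_2$ RKHS norm into the flat $L_2$ norm controlled by $\vol A$. Notably, part~\bref{P:lower-bound} of Proposition~\ref{T:nice-norms} plays no role in this proposition; only the sharp value of the sup norm of $\widehat{F_p}$ enters.
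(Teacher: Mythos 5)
Your proposal is correct and follows essentially the same route as the paper's proof: finiteness via a Schwartz (bump) function equal to $1$ on $A$ together with Theorem~\ref{T:inf}, and the lower bound via the Fourier formula for $\norm{\cdot}_{\mathcal{H}}$, the bound $\widehat{F_p} \le \widehat{F_p}(0) = \Gamma(\tfrac{n}{p}+1)\vol\mathcal{B}$ from Proposition~\ref{T:nice-norms}, Plancherel, and $\norm{h}_2^2 \ge \vol A$. The only cosmetic difference is that the paper applies the estimate to the potential function itself rather than to an arbitrary admissible $h$ followed by taking the infimum.
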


\begin{proof}
  By Proposition \ref{T:p-norm-RKHS}, $\mathcal{H}$ contains functions
  which are uniformly equal to $1$ on $A$, so the finiteness
  follows from Theorem \ref{T:inf}.

  For the lower bound, let $h$ be the potential function of $A$. By
  Theorem \ref{T:inf}, Proposition \ref{T:p-norm-RKHS}, Proposition
  \ref{T:nice-norms}(\ref{P:monotone}), and Plancherel's theorem,
  \[
  \mg{A} = \norm{h}_{\mathcal{H}}^2 \ge
  \frac{\bigl\Vert \widehat{h}\bigr\Vert_{2}^2}{\Gamma(\frac{n}{p}+1)
    \vol \mathcal{B}} =
  \frac{\norm{h}_{2}^2}{\Gamma(\frac{n}{p}+1)
    \vol \mathcal{B}} \ge \frac{\vol A}{\Gamma(\frac{n}{p}+1)
    \vol \mathcal{B}}.
  \qedhere
  \]
\end{proof}

The finiteness statement in Proposition \ref{T:p-norm-finite-mag} was
proved in Theorem 3.4.8 and Proposition 3.5.3 of \cite{MMS} for
$\ell_1^n$ and $\ell_2^n$, and in somewhat greater generality in
\cite[Theorem 4.3]{MeckPDM}. The lower bound was proved in
\cite[Theorem 3.5.6]{MMS} for $p=1$ and \cite[Theorem 4.5]{MeckPDM}
for the general case.\footnote{Theorem 4.5 in the published version of
  \cite{MeckPDM} is misstated in the case $p < 1$; see the current
  arXiv version for a correct statement.}  The proof here follows the
approach used in \cite{MeckMDC} for $\ell_2^n$ (see Proposition 5.6
and the remarks following Corollary 5.3 there).

We now consider the behavior of magnitude functions in $(\R^n, d_p)$.
We must be careful about a subtle notational issue when $p < 1$.
Recall that for a metric space $(A,d)$ and $t > 0$, we denote by $tA$
the metric space $(A, td)$, which in the present context is different
from the usual interpretation of $tA$.  Therefore we will introduce
the notation $t\cdot A = \Set{ta}{a \in A}$ for $A \subseteq
\R^n$.
Not that when $A \subseteq \R^n$ is equipped with the metric
$d_p(x,y) = \norm{x-y}^p$ associated to a $p$-norm, the metric space
$tA$ is isometric to the set $t^{1/p}\cdot A \subseteq \R^n$ equipped
with $d_p$.

The next result shows that magnitude knows about volume in all
finite-dimensional positive definite $p$-normed spaces.  This
generalizes \cite[Theorem 1]{BaCa} for Euclidean space $\ell_2^n$.

\begin{thm}
  \label{T:volume}
  If $A \subseteq (\R^n, d_p)$ is compact, then
  \[
  \lim_{t \to \infty} \frac{\mg{tA}}{t^{n/p}} =
  \lim_{t \to \infty} \frac{\mg{t\cdot A}}{t^{n}} = \frac{\vol
    A}{\Gamma\bigl(\frac{n}{p}+1\bigr) \vol \mathcal{B}}.
  \]
\end{thm}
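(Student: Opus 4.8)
The plan is to prove the second equality, $\lim_{t\to\infty} \mg{t\cdot A}/t^n = \vol A / \bigl(\Gamma\bigl(\frac{n}{p}+1\bigr)\vol \mathcal{B}\bigr)$, and deduce the first from it. As noted just before the statement, the metric space $tA$ is isometric to $t^{1/p}\cdot A\subseteq\R^n$, so $\mg{tA} = \mg{t^{1/p}\cdot A}$ and $\mg{tA}/t^{n/p} = \mg{s\cdot A}/s^n$ with $s = t^{1/p}\to\infty$; the two limits therefore coincide once either is known. The lower bound is then immediate: applying Proposition~\ref{T:p-norm-finite-mag} to the compact set $t\cdot A$ gives $\mg{t\cdot A} \ge \vol(t\cdot A)/\bigl(\Gamma\bigl(\frac{n}{p}+1\bigr)\vol \mathcal{B}\bigr) = t^n\,\vol A/\bigl(\Gamma\bigl(\frac{n}{p}+1\bigr)\vol \mathcal{B}\bigr)$ for every $t>0$, so $\liminf_{t\to\infty}\mg{t\cdot A}/t^n$ is at least the claimed value.

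For the matching upper bound, I would feed cheap test functions into the variational formula of Theorem~\ref{T:inf}. Fix $\delta>0$ and a function $u\in\mathcal{S}(\R^n)$ with $0\le u\le 1$, $u\equiv 1$ on $A$, and $\supp u$ contained in the $\delta$-neighborhood $A_\delta$ of $A$; then $u\in\mathcal{H}$ by Proposition~\ref{T:p-norm-RKHS}. Setting $h_t(x) = u(x/t)$, we have $h_t\equiv 1$ on $t\cdot A$, so Theorem~\ref{T:inf} gives $\mg{t\cdot A}\le \norm{h_t}_{\mathcal{H}}^2$. Using $\widehat{h_t}(\xi) = t^n\widehat{u}(t\xi)$ and a change of variables in the Fourier description of the norm (Proposition~\ref{T:p-norm-RKHS}),
\[
\norm{h_t}_{\mathcal{H}}^2 = t^n \int_{\R^n} \frac{\abs{\widehat{u}(\zeta)}^2}{\widehat{F_p}(\zeta/t)}\, d\zeta .
\]
As $t\to\infty$ the integrand converges pointwise to $\abs{\widehat u(\zeta)}^2/\widehat{F_p}(0)$, since $\widehat{F_p}$ is a continuous stable density with $\widehat{F_p}(0) = \Gamma\bigl(\frac{n}{p}+1\bigr)\vol \mathcal{B}$ by Proposition~\ref{T:nice-norms}(\ref{P:monotone}). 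Passing the limit inside the integral via dominated convergence and Plancherel's theorem yields $\limsup_{t\to\infty}\mg{t\cdot A}/t^n \le \norm{u}_2^2/\widehat{F_p}(0)$. Finally $\norm{u}_2^2 \le \vol(A_\delta)$, and $\vol(A_\delta)\to\vol A$ as $\delta\to 0$ by continuity of measure from above (as $\bigcap_{\delta>0}A_\delta = A$); letting $\delta\to 0$ gives the upper bound and closes the argument.

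The main obstacle is justifying the dominated-convergence step. Everything hinges on the polynomial lower bound $\widehat{F_p}(x)\ge c(1+\norm{x}_2)^{-(1+p)n}$ of Proposition~\ref{T:nice-norms}(\ref{P:lower-bound}), which gives $1/\widehat{F_p}(\zeta/t)\le c^{-1}(1+\norm{\zeta}_2)^{(1+p)n}$ uniformly for $t\ge 1$, so that the integrand is dominated by $c^{-1}(1+\norm{\zeta}_2)^{(1+p)n}\abs{\widehat{u}(\zeta)}^2$, an integrable function thanks to the rapid decay of $\widehat u$. This is precisely why the test function is taken Schwartz rather than, say, an indicator of $A_\delta$: only then does $\widehat{u}$ decay fast enough to defeat the possible polynomial growth of $1/\widehat{F_p}$. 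The remaining ingredients — the Fourier scaling identity and the elementary fact $\vol(A_\delta)\to\vol A$ — are routine.
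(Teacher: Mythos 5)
Your proof is correct and follows essentially the same route as the paper's: the same lower bound from Proposition~\ref{T:p-norm-finite-mag}, the same rescaled test functions $h_t(x)=h(x/t)$ fed into Theorem~\ref{T:inf}, the same Fourier change of variables, and the same approximation of $\vol A$ by $\norm{h}_2^2$. The only (harmless) differences are that you justify the limit interchange by dominated convergence using the polynomial lower bound of Proposition~\ref{T:nice-norms}(\ref{P:lower-bound}) --- which is why you need Schwartz test functions --- whereas the paper uses the monotonicity in Proposition~\ref{T:nice-norms}(\ref{P:monotone}) together with monotone convergence (valid for any $h\in\mathcal{H}$ with $h\equiv 1$ on $A$), and that you construct the near-optimal test functions explicitly rather than citing their existence.
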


\begin{proof}
  Proposition \ref{T:p-norm-finite-mag} implies that
  \[
  \mg{t\cdot A} \ge \frac{\vol (t \cdot
    A)}{\Gamma\bigl(\frac{n}{p}+1\bigr) \vol \mathcal{B}} =
  \frac{t^n \vol A}{\Gamma\bigl(\frac{n}{p}+1\bigr) \vol
    \mathcal{B}}
  \]
  for every $t > 0$.  Now suppose that $h \in \mathcal{H}$ satisfies
  $h \equiv 1$ on $A$, and let $h_t(x) = h(x/t)$. Then by Theorem
  \ref{T:inf} and Proposition \ref{T:p-norm-RKHS},
  \begin{equation}
    \label{E:mag-fn-upper-bound}
    \frac{\mg{t \cdot A}}{t^n} \le \int_{\R^n} \frac{1}{\widehat{F_p}(x)}
    \abs{\widehat{h_t} (x)}^2 \ dx = \int_{\R^n}
    \frac{1}{\widehat{F_p}(x/t)} \abs{\widehat{h}(x)}^2 \ dx.
  \end{equation}
  Proposition \ref{T:nice-norms}(\ref{P:monotone}), the
  monotone convergence theorem, and Plancherel's theorem imply that
  \[
  \lim_{t \to \infty} \int_{\R^n} \frac{1}{\widehat{F_p}(x/t)}
  \abs{\widehat{h}(x)}^2 \ dx =
  \frac{\norm{h}_{2}^2}{\Gamma\bigl(\frac{n}{p}+1\bigr) \vol
    \mathcal{B}}.
  \]
  By Theorem \ref{T:p-norm-RKHS}, there exist functions $h \in
  \mathcal{H}$ with $h \equiv 1$ on $A$ such that $\norm{h}_{2}^2$ is
  arbitrarily close to $\vol A$ (cf.\ the proof of \cite[Theorem
  1]{BaCa}), which completes the proof.
\end{proof}

The next theorem is the major known continuity result (as opposed to
mere \emph{semi}continuity) for magnitude.

\begin{thm}
  \label{T:star-continuity}
  Denote by $\mathcal{K}_n$ the class of nonempty compact subsets of
  $\R^n$, equipped with the Hausdorff metric $d_H$ induced by $d_p$,
  and suppose that $A \in \mathcal{K}_n$ is star-shaped with respect
  to some point in its interior.  Then magnitude, as a function
  $\mathcal{K}_n \to \R$, is continuous at $A$.
\end{thm}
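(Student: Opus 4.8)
The plan is to prove continuity at $A$ by establishing lower and upper semicontinuity separately, the former being essentially free and the latter carrying all the content. For lower semicontinuity, I would note that $(\R^n,d_p)$ is positive definite, so every $B\in\mathcal{K}_n$ is positive definite, and that Hausdorff convergence inside the fixed ambient space $(\R^n,d_p)$ implies Gromov--Hausdorff convergence. Since magnitude coincides with the function $M$ of \eqref{E:sup}, Proposition~\ref{T:semicont} gives $\liminf_{B\to A}\mg{B}\ge\mg{A}$ immediately. It remains to prove $\limsup_{B\to A}\mg{B}\le\mg{A}$, and here I would use the star-shaped hypothesis. Assume without loss of generality that $A$ is star-shaped with respect to $0\in\operatorname{int}A$, so that $r\mathcal{B}\subseteq A$ for some $r>0$ and $A\subseteq s\cdot A$ for every $s\ge 1$.

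The first reduction is to outer parallel bodies. If $d_H(A,B)<\eta$ then $B$ is contained in the closed $\eta$-neighbourhood $A_\eta=A+\eta^{1/p}\mathcal{B}$ (recall the $d_p$-ball of radius $\eta$ is $\eta^{1/p}\mathcal{B}$), so monotonicity (Proposition~\ref{T:compact-monotone}) gives $\mg{B}\le\mg{A_\eta}$. Thus upper semicontinuity at $A$ follows once I show $\lim_{\eta\to 0^+}\mg{A_\eta}=\mg{A}$; conversely $A_\eta$ is itself an admissible family converging to $A$, so this limit is exactly what must be proved.

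The engine is dilation continuity together with a parallel-body inclusion. First I would show $\mg{s\cdot A}\to\mg{A}$ as $s\downarrow 1$: letting $h$ be the potential function of $A$ (Theorem~\ref{T:inf}), the function $h_s=h(\,\cdot\,/s)$ is $\equiv 1$ on $s\cdot A$, so by Theorem~\ref{T:inf} and Proposition~\ref{T:p-norm-RKHS}, after the substitution $\zeta=s\xi$,
\[
\mg{s\cdot A}\le\norm{h_s}_{\mathcal{H}}^2
= s^n\int_{\R^n}\frac{1}{\widehat{F_p}(\zeta/s)}\,\bigl|\widehat{h}(\zeta)\bigr|^2\,d\zeta .
\]
By Proposition~\ref{T:nice-norms}(\ref{P:monotone}), $\widehat{F_p}(\zeta/s)\downarrow\widehat{F_p}(\zeta)$ as $s\downarrow 1$, so the integral increases to $\norm{h}_{\mathcal{H}}^2=\mg{A}$ by monotone convergence while $s^n\to 1$; with $\mg{s\cdot A}\ge\mg{A}$ this yields the limit. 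Next, for a \emph{convex} body $A$ the inclusion $A_\eta\subseteq(1+\eta^{1/p}/r)\cdot A$ holds by a one-line convex-combination argument using $r\mathcal{B}\subseteq A$, and then monotonicity squeezes $\mg{A}\le\mg{A_\eta}\le\mg{(1+\eta^{1/p}/r)\cdot A}\to\mg{A}$. This already settles parts~(\ref{part:ell1-cvx-bodies}) and~(\ref{part:ell1-cvx-two})-type convex cases of the theorem cleanly.

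The hard part is extending this to \emph{general} star-shaped $A$, and this is where I expect the main obstacle. The parallel-body inclusion $A_\eta\subseteq(1+c\eta^{1/p})\cdot A$ genuinely \emph{fails} for star-shaped sets: a disc with a thin radial spike has shoulder points in $A_\eta$ that are $d_p$-close to $A$ yet have arbitrarily large radial gauge, so no dilation of $A$ near the identity contains them. Hence the squeeze must be replaced by a direct construction of a test function $g_\eta\equiv 1$ on $A_\eta$ with $\norm{g_\eta}_{\mathcal{H}}^2\to\mg{A}$. I would take $g_\eta=h_s$ (dilated potential, equal to $1$ on $s\cdot A\supseteq A$) plus a correction supported on the thin residual region $A_\eta\setminus s\cdot A$ that lifts $g_\eta$ to $1$ there. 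Because $A$ is star-shaped about an interior point, $\partial A$ is a radial graph, so this residual region is a thin shell whose volume is $O(\eta^{1/p})$; the task is then to bound the $\mathcal{H}$-norm of the correction, using the explicit norm of Proposition~\ref{T:p-norm-RKHS} and the lower bound on $\widehat{F_p}$ from Proposition~\ref{T:nice-norms}(\ref{P:lower-bound}) (and, for the volume bookkeeping, Proposition~\ref{T:p-norm-finite-mag}), and to show it tends to $0$ as $\eta\to 0^+$. Controlling this correction --- equivalently, proving outer-continuity of the parallel-body magnitude despite the failure of the naive dilation inclusion --- is the crux on which the whole argument turns.
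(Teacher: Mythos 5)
Every step you actually complete is the paper's proof: lower semicontinuity comes from Proposition~\ref{T:semicont}; monotonicity (Proposition~\ref{T:compact-monotone}) reduces the problem to the outer parallel bodies $A+\eps^{1/p}\cdot\mathcal{B}$; the dilation bound comes from the potential function via \eqref{E:mag-fn-upper-bound} (the paper uses the cruder estimate $\mg{t\cdot A}\le t^n\mg{A}$ for $t\ge 1$ rather than your monotone-convergence refinement, but either suffices); and the argument is finished by the inclusion $A+\eps^{1/p}\cdot\mathcal{B}\subseteq\bigl(1+(\eps/r)^{1/p}\bigr)\cdot A$, which gives $\mg{B}\le\bigl(1+(\eps/r)^{1/p}\bigr)^{n}\mg{A}$ whenever $d_H(A,B)<\eps$. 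That is the entire content of the paper's proof.

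Where you and the paper part ways is over the validity of that inclusion. The paper asserts it for any $A$ star-shaped about $0$ with $r^{1/p}\cdot\mathcal{B}\subseteq A$; you claim it can fail, and you are right. Writing $\lambda=(\eps/r)^{1/p}$, the point $(a+b)/(1+\lambda)$ lies on the segment joining $a\in A$ to $b/\lambda\in r^{1/p}\cdot\mathcal{B}$, so the argument needs $A$ to be star-shaped with respect to \emph{every} point of that ball (convexity being the natural sufficient condition), not merely with respect to its centre; your disc-with-a-radial-spike defeats the inclusion for a set that is star-shaped about a single interior point. So the paper's proof, read literally, establishes the theorem only for this smaller class --- which does include all convex bodies, the only case the paper uses elsewhere. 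Your proposed repair for the remaining sets (a correction term supported on the residual shell $A_\eta\setminus s\cdot A$) appears nowhere in the paper, and you do not carry it out: bounding the $\mathcal{H}$-norm of that correction is precisely the nontrivial point, and the $O(\eta^{1/p})$ volume of the shell does not by itself control the weighted norm of Proposition~\ref{T:p-norm-RKHS}, whose weight $1/\widehat{F_p}$ grows polynomially. In short: for convex bodies, or more generally for sets star-shaped with respect to all points of a small ball, your proof is complete and coincides with the paper's; for the theorem in its stated generality, your argument stops at exactly the step that the paper's own argument also fails to justify.
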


\begin{proof}
  By Proposition \ref{T:semicont}, we only need to show that magnitude
  is upper semicontinuous at $A$. Letting $h$ be the potential
  function of $A$, \eqref{E:mag-fn-upper-bound} and Proposition
  \ref{T:p-norm-RKHS} imply that $\mg{t \cdot A} \le t^n \mg{A}$ for
  $t \ge 1$.  By translation-invariance, we may assume that $A$ is
  star-shaped about $0$ and $r^{1/p} \cdot \mathcal{B} \subseteq A$
  for some $r > 0$.  Now if $B \in \mathcal{K}_n$ and
  $d_H(A,B) < \eps$, then
  \[
  B \subseteq A + \eps^{1/p} \cdot \mathcal{B} \subseteq
  \left(1+ \left(\frac{\eps}{r}\right)^{1/p}\right) \cdot A,
  \]
  and so $\mg{B} \le \bigl(1+\bigl(\frac{\eps}{r}\bigr)^{1/p}\bigr)^n
  \mg{A}$. Thus magnitude is upper semicontinuous at $A$.
\end{proof}

The family of sets $A$ in Theorem \ref{T:star-continuity} is slightly
larger than what are sometimes called ``star bodies'', and of course
includes all convex bodies.  It is unknown, however, whether
magnitude is continuous when restricted to compact convex sets which
are not required to have nonempty interior.  

The final result in this section shows that, in positive definite
$p$-normed spaces, magnitude can be computed from potential functions
simply by integrating, as opposed to computing the (more complicated)
$\mathcal{H}$-norm.

\begin{thm}
  \label{T:magnitude-integral}
  Let $A \subseteq (\R^n, d_p)$ be compact, and suppose that the
  potential function $h \in \mathcal{H}$ of $A$ is integrable.  Then
  \[
  \mg{A} = \frac{1}{\Gamma\bigl(\frac{n}{p}+1\bigr) \vol \mathcal{B}}
  \int_{\R^n} h(x) \ dx.
  \]
\end{thm}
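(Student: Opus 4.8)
The plan is to express the magnitude directly in terms of the potential function $h$ and exploit the characterizations already established. By Theorem \ref{T:inf}, we know that $\mg{A} = \norm{h}_{\mathcal{H}}^2 = \inprod{f}{h}_{\mathcal{H}}$ for any $f \in \mathcal{H}$ with $f \equiv 1$ on $A$. The key idea is to rewrite the squared $\mathcal{H}$-norm $\norm{h}_{\mathcal{H}}^2 = \int_{\R^n} \frac{1}{\widehat{F_p}(x)} \abs{\widehat{h}(x)}^2 \, dx$ (via Proposition \ref{T:p-norm-RKHS}) in a form that, after using that $h \equiv 1$ on $A$, collapses to a plain integral of $h$.

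First I would observe that since $h \in \mathcal{H}$ is the potential function, it represents the linear functional on $\mathcal{H}$ that, evaluated against $f_w = \sum_b w_b F_p(\cdot - b)$, returns $\sum_b w_b$; concretely, $\inprod{F_p(\cdot - a)}{h}_{\mathcal{H}} = h(a) = 1$ for each $a \in A$. The strategy is to compute $\mg{A} = \inprod{h}{h}_{\mathcal{H}}$ by a Fourier/Plancherel manipulation: writing $\inprod{h}{h}_{\mathcal{H}} = \int_{\R^n} \frac{\widehat{h}(x) \overline{\widehat{h}(x)}}{\widehat{F_p}(x)} \, dx$, I would try to recognize $\frac{\overline{\widehat{h}(x)}}{\widehat{F_p}(x)}$ as the Fourier transform of the weighting distribution $\varphi$ dual to $h$ (the element of $\mathcal{H}^*$ with $\widehat{\varphi} = \widehat{h}/\widehat{F_p}$, as indicated in the discussion following Proposition \ref{T:p-norm-RKHS}). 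Then $\mg{A} = \int \widehat{h}(x) \overline{\widehat{\varphi}(x)} \, dx = \inprod{h}{\varphi}$ in the appropriate pairing, which by the defining property of the weighting should equal $\int_{\R^n} h \, d\varphi$-type expression. The cleaner route: I would use that the weighting $\varphi$ satisfies $\int F_p(x - y)\, d\varphi(y) = h(x) \equiv 1$ on $A$, so that $\mg{A} = \varphi(\R^n) = \widehat{\varphi}(0)$, and simultaneously express $\widehat{\varphi}(0)$ through $h$.

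The cleanest argument I would pursue uses Plancherel directly. Since $h$ is integrable by hypothesis, $\widehat{h}$ is continuous and $\widehat{h}(0) = \int_{\R^n} h(x)\, dx$. The aim is to show $\mg{A} = \widehat{\varphi}(0) \cdot \widehat{F_p}(0)$-style identity, where $\widehat{F_p}(0) = \Gamma\bigl(\frac{n}{p}+1\bigr) \vol \mathcal{B}$ by Proposition \ref{T:nice-norms}(\ref{P:monotone}). Precisely: the potential function factors as $\widehat{h} = \widehat{F_p} \cdot \widehat{\varphi}$ (since $h = F_p * \varphi$ in the distributional sense, as $h$ arises from convolving the stable density with the weighting), whence
\[
\int_{\R^n} h(x)\, dx = \widehat{h}(0) = \widehat{F_p}(0)\, \widehat{\varphi}(0) = \Gamma\Bigl(\tfrac{n}{p}+1\Bigr) \vol \mathcal{B} \cdot \mg{A},
\]
using that $\widehat{\varphi}(0) = \varphi(\R^n) = \mg{A}$ (the total mass of the weighting equals the magnitude). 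Rearranging gives the claimed formula.

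The main obstacle will be justifying the factorization $h = F_p * \varphi$ and the evaluation $\widehat{h}(0) = \widehat{F_p}(0)\,\widehat{\varphi}(0)$ at the level of tempered distributions, since $\varphi$ need not be a finite measure (it may be a genuine higher-order distribution, as flagged in the text for Euclidean balls). The delicate point is that $\widehat{\varphi}$ lives in $L_2(\widehat{F_p}(x)\,dx)$ but may not be continuous at $0$, so I must verify that evaluating $\widehat{\varphi}(0) = \mg{A}$ is legitimate; the integrability of $h$ is precisely the hypothesis that should rescue this, forcing enough decay on $\widehat{\varphi} = \widehat{h}/\widehat{F_p}$ near infinity (via Proposition \ref{T:nice-norms}(\ref{P:lower-bound})) and continuity of $\widehat{h}$ to make $\widehat{h}(0)$ meaningful. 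I would handle this by approximating $\varphi$ with finitely-supported weightings $w$ on finite subsets $B \subseteq A$, for which $\sum_b w_b \to \mg{A}$ and $\widehat{f_w}(0) = \widehat{F_p}(0) \sum_b w_b$, then passing to the limit using the convergence in $\mathcal{H}^*$ established in Section \ref{S:compact-pd}.
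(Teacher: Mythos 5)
Your reduction of the theorem to the single identity $\widehat{h}(0) = \widehat{F_p}(0)\,\mg{A}$ is the right one, and much of the surrounding discussion is sound: $\widehat{h}$ is continuous because $h \in L_1$, $\widehat{F_p}(0) = \Gamma\bigl(\frac{n}{p}+1\bigr)\vol\mathcal{B}$, and the factorization $\widehat{h} = \widehat{F_p}\,\widehat{\varphi}$ is just the definition of the weighting $\varphi$, so the whole content of the theorem is the claim $\widehat{\varphi}(0) = \mg{A}$. The gap is in your justification of that claim. You propose to take finite subsets $B_k \subseteq A$ with weightings $w^{(k)}$, for which $f_{w^{(k)}} \to h$ in $\mathcal{H}$ (this part is fine: $\norm{f_{w^{(k)}} - h}_{\mathcal{H}}^2 = \mg{A} - \mg{B_k}$) and $\widehat{f_{w^{(k)}}}(0) = \widehat{F_p}(0)\,\mg{B_k} \to \widehat{F_p}(0)\,\mg{A}$, and then to ``pass to the limit.'' But the functional $g \mapsto \widehat{g}(0) = \int g\,dx$ is \emph{not} bounded on $\mathcal{H}$: the $\mathcal{H}$-norm controls $\norm{g}_2$ and $\norm{g}_\infty$, but not $\norm{g}_1$ (a smooth bump of height $\eps$ spread over a region of volume $\eps^{-1}$ has $\mathcal{H}$-norm of order $\eps^{1/2}$ while its integral stays of order $1$). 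So convergence in $\mathcal{H}$, or in $\mathcal{H}^*$, does not imply convergence of the integrals, and the integrability of $h$ itself provides no domination over the approximants $f_{w^{(k)}}$. As written, the final limiting step is unjustified; to rescue it you would need something extra, e.g.\ that the $\widehat{\varphi_k} = \sum_b w^{(k)}_b e^{-2\pi i \inprod{\cdot}{b}}$ are transforms of measures supported in the fixed compact set $A$, hence entire of uniformly bounded exponential type, so that local $L_2$ convergence upgrades to convergence at the point $0$ — an argument you have not supplied.

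The paper's proof dualizes the limit so that this difficulty never arises. Fix an even $f \in \mathcal{S}(\R^n)$ with $f \equiv 1$ near the origin and set $f_k = f(\cdot/k)$ and $\varphi_k = \widehat{f_k}/\widehat{F_p}$. For all large $k$ one has $f_k \equiv 1$ on $A$, so the last part of Theorem \ref{T:inf} gives the \emph{exact} identity $\mg{A} = \inprod{h}{f_k}_{\mathcal{H}} = \int_{\R^n} h(x)\,\widehat{\varphi_k}(x)\,dx$ (via Parseval), with no limit needed to identify $\mg{A}$. The limit $k \to \infty$ is then taken on the integral side by dominated convergence: Proposition \ref{T:nice-norms}(\ref{P:monotone}) yields the uniform bound $\norm{\widehat{\varphi_k}}_\infty \le \int \abs{\widehat{f}}/\widehat{F_p} < \infty$ together with the pointwise limit $\widehat{\varphi_k} \to 1/\widehat{F_p}(0)$, and the hypothesis $h \in L_1$ enters precisely as the dominating function. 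In effect the paper evaluates $\widehat{\varphi}$ at $0$ by testing against a mollified delta on the Fourier side rather than by approximating $\varphi$ itself; that is the missing idea in your proposal.
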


\begin{proof}
  Fix an even function $f \in \mathcal{S}(\R^n)$ with $f \equiv 1$ on
  some open neighborhood of the origin. Set $f_k(x) = f(x/k)$ and
  $\varphi_k = \widehat{f_k} / \widehat{F_p}$ for $k \in \N$.  Then
  $\varphi_k \in L_1 (\R^n)$ and
  \[
  \norm{\widehat{\varphi_k}}_\infty \le \norm{\varphi_k}_1 =
  \int_{\R^n} \frac{1}{\widehat{F_p}(x/k)} \abs{\widehat{f}(x)}\ dx \le
  \int_{\R^n} \frac{1}{\widehat{F_p}(x)} \abs{\widehat{f}(x)}\ dx <
  \infty
  \]
  by Proposition \ref{T:nice-norms}(\ref{P:monotone}).
  Furthermore, for every $x \in \R^n$,
  \[
  \widehat{\varphi_k}(x) = \int_{\R^n} e^{-2\pi i \inprod{x}{y/k}}
  \frac{\widehat{f}(y)}{\widehat{F_p}(y/k)} \ dy \xrightarrow{k\to
    \infty} \int_{\R^n} \frac{\widehat{f}(y)}{\widehat{F_p}(0)} \ dy =
  \frac{f(0)}{\Gamma\bigl(\frac{n}{p}+1\bigr) \vol \mathcal{B}} =
  \frac{1}{\Gamma\bigl(\frac{n}{p}+1\bigr) \vol \mathcal{B}}
  \]
  by the dominated convergence theorem.
  
  By the last part of Theorem \ref{T:inf}, for sufficiently large $k$,
  \[
  \mg{A} = \inprod{h}{f_k}_{\mathcal{H}} = \int_{\R^n} \widehat{h}(x)
  \varphi_k(x) \ dx = \int_{\R^n} h(x) \widehat{\varphi_k}(x) \ dx
  \]
  by Parseval's identity, and the claim now follows by the dominated
  convergence theorem.
\end{proof}

\subsection{Magnitude in Euclidean space}
\label{S:Euclidean}

Finally, we specialize the tools of section \ref{S:Fourier} to the
setting of Euclidean space $\ell_2^n$, where they become even more
powerful, allowing one to prove much more refined results about
continuity, asymptotics, and exact values of magnitude than in more
general normed spaces.

We will write simply $F(x) = e^{-\norm{x}_2}$, and let
$\ball{n}_R = \Set{x \in \R^n}{\norm{x}_2 \le R}$.  In this setting we
have the explicit formula
\begin{equation}
  \label{E:F-hat}
  \widehat{F}(x) = \frac{n! \omega_n}{(1 + 4 \pi^2
    \norm{x}_2^2)^{(n+1)/2}},
\end{equation}
where $\omega_n = \vol_n(\ball{n}_1)$ (see \cite[Theorem 1.14]{StWe}).
This implies that the RKHS $\mathcal{H}$ for $\ell_2^n$ is the
classical Sobolev space
\[
H^{(n+1)/2}(\R^n) = \Set{ f \in L_2(\R^n)}{\int_{\R^n} (1 + 4 \pi^2
  \norm{x}_2^2)^{(n+1)/2} \abs{\widehat{f}(x)}^2 \ dx < \infty},
\]
and that
$\norm{f}_{\mathcal{H}}^2 = \frac{1}{n! \omega_n}
\norm{f}_{H^{(n+1)/2}}^2$.

A first application of this observation is the following, proved for
$\ell_2^n$ in \cite[Corollary 5.5]{MeckMDC}.

\begin{thm}
  \label{T:mf-cont}
  If $A$ is a compact subset of $\ell_1^n$ or $\ell_2^n$, then the
  magnitude function $t \mapsto \mg{tA}$ is continuous on
  $(0,\infty)$.
\end{thm}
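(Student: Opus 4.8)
The plan is to show that $t \mapsto \mg{tA}$ is both lower and upper semicontinuous on $(0,\infty)$. Since $\ell_1^n$ and $\ell_2^n$ carry genuine norms (the case $p = 1$), the rescaled space $tA$ is isometric to the dilate $t \cdot A = \Set{ta}{a \in A}$, so I work throughout with $g(t) := \mg{t \cdot A}$. Lower semicontinuity comes for free: these spaces are positive definite (Theorem~\ref{T:pd-examples}), the assignment $t \mapsto t \cdot A$ is continuous into the Gromov--Hausdorff space, and magnitude is lower semicontinuous there by Proposition~\ref{T:semicont}. The entire difficulty is therefore upper semicontinuity, and the argument is engineered to exploit the fact that in these two spaces $1/\widehat{F}$ is an explicit weight of polynomial type.

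First I would recast magnitude as an infimum of functions that are manifestly continuous in $t$. By Theorem~\ref{T:inf}, $g(t) = \inf\{\norm{h}_{\mathcal{H}}^2 : h \in \mathcal{H},\ h \equiv 1 \text{ on } t\cdot A\}$. The substitution $h(x) = \psi(x/t)$ sets up a bijection between the functions equal to $1$ on $t \cdot A$ and the functions $\psi \in \mathcal{H}$ equal to $1$ on $A$; using the Fourier description of the $\mathcal{H}$-norm from Proposition~\ref{T:p-norm-RKHS} together with the change of variable $\eta = t\xi$, one rewrites $\norm{h}_{\mathcal{H}}^2 = \Psi_\psi(t)$, where
\[
\Psi_\psi(t) = t^n \int_{\R^n} \frac{\abs{\widehat{\psi}(\eta)}^2}{\widehat{F}(\eta/t)} \ d\eta.
\]
Thus $g(t) = \inf_\psi \Psi_\psi(t)$, the infimum being over the fixed, $t$-independent family $\{\psi \in \mathcal{H} : \psi \equiv 1 \text{ on } A\}$. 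An infimum of continuous functions is upper semicontinuous, so the task reduces to proving that each $\Psi_\psi$ is finite and continuous on $(0,\infty)$.

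This last step is the crux, and it is exactly here that the special form of $\widehat{F}$ in $\ell_1^n$ and $\ell_2^n$ is used. In $\ell_2^n$, Equation~\eqref{E:F-hat} gives $1/\widehat{F}(\eta) = (1 + 4\pi^2\norm{\eta}_2^2)^{(n+1)/2}/(n!\,\omega_n)$, while in $\ell_1^n$ the product structure of the norm yields $1/\widehat{F}(\eta) = 2^{-n}\prod_{i=1}^n (1 + 4\pi^2 \eta_i^2)$. In both cases, for $t$ ranging over a compact interval $[a,b] \subseteq (0,\infty)$, a direct comparison of these weights gives a bound $1/\widehat{F}(\eta/t) \le C_a / \widehat{F}(\eta)$ with $C_a$ independent of $\eta$ and $t$. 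Hence the integrand defining $\Psi_\psi(t)$ is dominated, uniformly in $t \in [a,b]$, by the integrable function $b^n C_a \abs{\widehat{\psi}(\eta)}^2/\widehat{F}(\eta)$, integrable precisely because $\psi \in \mathcal{H}$. Since the integrand is also continuous in $t$ for each fixed $\eta$, the dominated convergence theorem shows that $\Psi_\psi$ is finite and continuous on $(0,\infty)$.

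The main obstacle is this domination, and it is also what confines the theorem to $\ell_1^n$ and $\ell_2^n$: for a general positive definite $p$-norm one controls $\widehat{F}$ only through the one-sided lower bound of Proposition~\ref{T:nice-norms}(\ref{P:lower-bound}), which is insufficient to bound $1/\widehat{F}(\eta/t)$ by a constant multiple of $1/\widehat{F}(\eta)$ as $t$ decreases below $1$; without this the functions $\Psi_\psi$ may jump to $+\infty$ from the left and upper semicontinuity can genuinely fail. Granting the domination, combining the resulting upper semicontinuity of $g$ with the lower semicontinuity noted at the outset yields continuity of $g(t) = \mg{t\cdot A} = \mg{tA}$ on $(0,\infty)$, as required.
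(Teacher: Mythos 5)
Your proof is correct. It rests on the same two Fourier-analytic facts as the paper's argument --- the scaling identity behind \eqref{E:mag-fn-upper-bound} and the pointwise comparability $1/\widehat{F}(\eta/t) \le C\,/\widehat{F}(\eta)$, which is available in $\ell_1^n$ and $\ell_2^n$ because $1/\widehat{F}$ is an explicit polynomial-type weight --- but it assembles them differently. The paper (following Theorem 5.4 and Corollary 5.5 of \cite{MeckMDC}) converts the weight comparison into two-sided power-law bounds $t^{-n}\mg{A} \le \mg{tA} \le t^{n}\mg{A}$ for $t \ge 1$ (with exponent $1$ available in $\ell_2^n$), and then deduces continuity at each $t$ by the squeeze $(s/t)^{-n}\mg{tA} \le \mg{sA} \le (s/t)^{n}\mg{tA}$. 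You instead split continuity into lower semicontinuity, imported wholesale from the Gromov--Hausdorff result (Proposition~\ref{T:semicont}), and upper semicontinuity, obtained by exhibiting $\mg{t\cdot A}$ as an infimum of the continuous functions $\Psi_\psi$ over a $t$-independent index set. Your route never needs the lower power-law bound $\mg{tA}\ge t^{-n}\mg{A}$ and cleanly isolates where the hypothesis on $\widehat{F}$ enters (the domination for the dominated convergence theorem); the paper's route yields the stronger quantitative conclusion that the magnitude function is locally H\"older-type controlled, not merely continuous. One small point worth making explicit in your write-up: the assertion that $h(x)=\psi(x/t)$ gives a bijection between the two constraint sets requires that $h\in\mathcal{H}$ if and only if $\psi\in\mathcal{H}$, which uses the \emph{two-sided} comparability of $1/\widehat{F}(\eta/t)$ and $1/\widehat{F}(\eta)$ for each fixed $t$; this holds by the same explicit formulas you already invoke, so it is a one-line addition rather than a gap.
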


\begin{proof}[Sketch of proof]
  For $\ell_2^n$, using \eqref{E:F-hat} one can show that
  $\mg{tA} \ge \frac{1}{t} \mg{A}$ for $t \ge 1$, along the lines of
  \eqref{E:mag-fn-upper-bound}. For $\ell_1^n$, if we let
  $G(x) = e^{-\norm{x}_1} = \prod_{i=1}^n e^{-\abs{x_i}}$, then the
  $n=1$ case of \eqref{E:F-hat} implies that
  \[
  \widehat{G}(x) = \prod_{i=1}^n \frac{2}{1 + 4\pi^2 x_i^2},
  \]
  and a similar argument yields that $\mg{tA} \ge t^{-n} \mg{A}$ for
  $t \ge 1$.

  In either case, \eqref{E:mag-fn-upper-bound} shows that
  $\mg{tA} \le t^n \mg{A}$. Together, these estimates imply that the
  magnitude function of $A$ is continuous on $(0,\infty)$; see Theorem
  5.4 and Corollary 5.5 of \cite{MeckMDC}.
\end{proof}

The most significant consequence of \eqref{E:F-hat} is that when $n$
is odd, $1/\widehat{F}$ is the symbol of a differential operator on
$\R^n$.  In particular, when $f:\R^n \to \R$ is smooth,
\begin{equation}
  \label{E:H-norm-deriv}
  \norm{f}_{H^{(n+1)/2}}^2 = \int_{\R^n} f(x) \bigl[(I -
  \Delta)^{(n+1)/2} f\bigr](x) \ dx,
\end{equation}
where $I$ is the identity operator and $\Delta$ is the Laplacian on
$\R^n$.  This opens the door to using differential equations
techniques to study magnitude.  A first application is the proof of
the following result.

\begin{thm}[{\cite[Theorem 1]{BaCa}}]
  \label{T:shrink}
  If $A \subseteq \ell_2^n$ is compact, then
  $\lim_{t \to 0^+} \mg{tA} = 1$.
\end{thm}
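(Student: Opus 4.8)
The plan is to establish the equivalent upper bound $\limsup_{t \to 0^+} \mg{tA} \le 1$; the matching lower bound $\mg{tA} \ge 1$ holds for \emph{every} $t$ by monotonicity (Proposition~\ref{T:compact-monotone}), since a one-point subspace has magnitude $1$. First I would reduce to balls. Because $p = 1$ here, $tA$ is isometric to $t \cdot A = \Set{ta}{a \in A}$; if $\diam A \le R$ then, after a translation, $t \cdot A \subseteq \ball{n}_{tR}$, so Proposition~\ref{T:compact-monotone} gives $\mg{t \cdot A} \le \mg{\ball{n}_{tR}}$. It therefore suffices to prove $\lim_{\rho \to 0^+} \mg{\ball{n}_\rho} = 1$. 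Moreover, magnitude is an intrinsic invariant of a metric space, and the Euclidean metric on $A$ is unchanged by the isometric inclusion $\ell_2^n \hookrightarrow \ell_2^{n+1}$ (which remains positive definite by Theorem~\ref{T:pd-examples}); so I may freely raise the dimension and assume the ball lives in $\ell_2^m$ with $m$ \emph{odd}. This is exactly the regime in which the differential-operator identity~\eqref{E:H-norm-deriv} is available, and it is what makes the even-dimensional case tractable.

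To bound $\mg{\ball{m}_\rho}$ I would use the variational formula of Theorem~\ref{T:inf}, exhibiting a competitor $h_\rho \in \mathcal{H}$ with $h_\rho \equiv 1$ on $\ball{m}_\rho$ and $\norm{h_\rho}_{\mathcal{H}}^2 \to 1$. The model is the function $F(x) = e^{-\norm{x}_2}$: from~\eqref{E:F-hat} and Fourier inversion one checks that $\norm{F}_{\mathcal{H}}^2 = 1$, so $F$ is the potential function of a single point. Since $F$ equals $1$ only at the origin, I flatten its peak: fix a smooth $\eta_\rho \colon [0,\infty) \to [0,\infty)$ with $\eta_\rho(r) = 0$ for $r \le \rho$ and $\eta_\rho(r) = r$ for $r \ge 2\rho$, and set $h_\rho(x) = e^{-\eta_\rho(\norm{x}_2)}$. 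Then $h_\rho \equiv 1$ on $\ball{m}_\rho$, it is identically $1$ near $0$ and coincides with $F$ outside $\ball{m}_{2\rho}$, and it decays exponentially; hence $h_\rho \in \mathcal{S}(\R^m) \subseteq \mathcal{H}$.

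Writing $k = (m+1)/2 \in \N$, identity~\eqref{E:H-norm-deriv} gives $\norm{h_\rho}_{\mathcal{H}}^2 = \frac{1}{m! \omega_m} \int_{\R^m} h_\rho \, (I - \Delta)^k h_\rho$. The decisive feature is locality: $(I - \Delta)^k$ is a differential operator of order $m+1$, and by~\eqref{E:F-hat} its multiplier $(1 + 4\pi^2\norm{x}_2^2)^k$ turns $\widehat{F}$ into the constant $m!\omega_m$, so $(I - \Delta)^k F = m!\omega_m \, \delta_0$ vanishes away from the origin. Equivalently, by Plancherel the norm of any element of $\mathcal{H}$ is a fixed linear combination $\sum_{j \le k} c_j \norm{\nabla^j \cdot}_2^2$ of squared $L_2$-norms of its derivatives. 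Since $h_\rho = F$ outside $\ball{m}_{2\rho}$, the difference $\norm{h_\rho}_{\mathcal{H}}^2 - \norm{F}_{\mathcal{H}}^2$ collapses to integrals over the vanishing ball $\ball{m}_{2\rho}$, and the task reduces to showing $\int_{\ball{m}_{2\rho}} \bigl( \abs{\nabla^j h_\rho}^2 - \abs{\nabla^j F}^2 \bigr) \to 0$ for each $j \le k$.

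The main obstacle is exactly this shell estimate. On the transition shell $\rho \le \norm{x}_2 \le 2\rho$ the derivatives $\eta_\rho^{(i)}$ are $O(\rho^{1 - i})$, so the higher-order chain rule gives $\abs{\nabla^j h_\rho} = O(\rho^{1 - j})$ there, while the conical singularity of $F$ at the origin gives $\abs{\nabla^j F} = O(\rho^{-(j-1)})$ on $\ball{m}_{2\rho}$. In both cases the $L_2$-mass over a region of volume $O(\rho^m)$ is $O(\rho^{m + 2 - 2j})$, which at the top order $j = k = (m+1)/2$ equals $O(\rho) \to 0$. Carrying out this count carefully — in particular verifying that the chain-rule expansion of $\nabla^j h_\rho$ and the $r^{-(j-1)}$ singularities of $\nabla^j F$ are genuinely square-integrable on $\ball{m}_{2\rho}$ with integrals tending to $0$ — is the one place real work is required; everything else is bookkeeping. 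This yields $\norm{h_\rho}_{\mathcal{H}}^2 \to \norm{F}_{\mathcal{H}}^2 = 1$, hence $\limsup_{\rho \to 0^+} \mg{\ball{m}_\rho} \le 1$, completing the argument.
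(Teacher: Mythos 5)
Your proposal is correct and follows essentially the same route as the paper's sketch of this theorem: reduce to balls by monotonicity, embed into an odd-dimensional $\ell_2^m$, and feed into the variational formula of Theorem~\ref{T:inf} a radial competitor that equals $1$ on the small ball and agrees with $e^{-\norm{x}_2}$ (whose $\mathcal{H}$-norm is $1$) outside a slightly larger ball. The only substantive difference is the transition region: the paper takes the wide shell $R \le \norm{x}_2 \le \sqrt{R}$ so that the competitor's derivatives can be made small, whereas you keep the thin shell $\rho \le \norm{x}_2 \le 2\rho$ and win by the power count $O(\rho^{m+2-2j})$, which you correctly verify still closes at the top order $j = (m+1)/2$ with margin $O(\rho)$.
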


\begin{proof}[Sketch of proof]
  By Proposition \ref{T:compact-monotone}, 
  it suffices to show that
  $\limsup_{R \to 0^+} \mg{\ball{n}_R} \le 1$; it further
  suffices, by embedding $\ell_2^n$ in $\ell_2^{n+1}$ if necessary, to
  assume that $n$ is odd.  For $0 < R < 1$ we can choose smooth
  functions $f_R$ such that
  \[
  f_R(x) = \begin{cases} 1 & \text{ if } \norm{x}_2 \le R, \\
    e^R e^{-\norm{x}_2} & \text{ if } \norm{x}_2 \ge \sqrt{R} \end{cases}
  \]
  and the derivatives of $f_R$ are sufficiently small for $R \le
  \norm{x}_2 \le \sqrt{R}$ that, using \eqref{E:H-norm-deriv},
  \[
  \norm{f_R}_{H^{(n+1)/2}}^2 = \norm{f_0}_{H^{(n+1)/2}}^2 + o(1)
  = n! \omega_n + o(1)
  \]
  when $R \to 0$; see the proof of \cite[Theorem 1]{BaCa}.  By Theorem
  \ref{T:inf}, this completes the proof.
\end{proof}
  
Together with Theorem \ref{T:mf-cont}, this shows that the magnitude
function of a compact $A \subseteq \ell_2^n$ is continuous on
$[0,\infty)$. Recall that this result is false for general metric
spaces $A$ of negative type \cite[Example 2.2.8]{MMS}, but it does
also hold for $A \subseteq \ell_1^n$
(Proposition~\ref{propn:ell1-mag-at-zero}). A monotone convergence
argument would prove the same result in a $p$-normed space if $\sup_x
\widehat{F_p}(x) / \widehat{F_p}(2x) < \infty.$

More significantly, we obtain the following conditions on the
potential function of a compact set $A \subseteq \ell_2^n$, which
provide the starting point for the only known approach for explicit
computation of magnitude for a convex body in $\ell_2^n$ when $n > 1$.
This result follows by considering the Euler--Lagrange equation of the
minimization problem in Theorem \ref{T:inf}, and applying elliptic
regularity.

\begin{thm}[Proposition 5.7 and Corollary 5.8 of \cite{MeckMDC}]
  \label{T:pde}
  Suppose that $n$ is odd and $A \subseteq \ell_2^n$ is compact. Then
  the potential function $h$ of $A$ is $C^\infty$ on $\R^n \setminus
  A$, and satisfies
  \begin{equation}
    \label{E:E-L}
    (I - \Delta)^{(n+1)/2} h (x) = 0
  \end{equation}
  on $\R^n \setminus A$.
\end{thm}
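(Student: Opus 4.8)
The plan is to extract the Euler--Lagrange equation from the variational characterization of the potential function in Theorem~\ref{T:inf}, and then upgrade the resulting distributional identity to a classical one by elliptic regularity. Recall that by Proposition~\ref{T:p-norm-finite-mag} we have $\mg{A} < \infty$, so Theorem~\ref{T:inf} provides the potential function $h$ as the unique element of $\mathcal{H}$ minimizing $\norm{f}_{\mathcal{H}}^2$ subject to $f \equiv 1$ on $A$. Since point evaluation is a bounded functional on the RKHS $\mathcal{H}$, the constraint set $\Set{f \in \mathcal{H}}{f \equiv 1 \text{ on } A}$ is a closed affine subspace, namely a translate of the closed linear subspace $V = \Set{\varphi \in \mathcal{H}}{\varphi \equiv 0 \text{ on } A}$. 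The minimizer of a squared Hilbert-space norm over an affine subspace is characterized by orthogonality to the underlying linear subspace, so the first step is simply to record that $\inprod{h}{\varphi}_{\mathcal{H}} = 0$ for every $\varphi \in V$.

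Next I would specialize to test functions. By Proposition~\ref{T:p-norm-RKHS}, $\mathcal{S}(\R^n) \subseteq \mathcal{H}$, so every $\varphi \in C_c^\infty(\R^n \setminus A)$ lies in $\mathcal{H}$, vanishes on $A$, and hence belongs to $V$. Here the oddness of $n$ is essential: $(n+1)/2$ is then an integer, so that $(1 + 4\pi^2\norm{x}_2^2)^{(n+1)/2}$ is the Fourier multiplier of the genuine (hence \emph{local}) self-adjoint differential operator $(I-\Delta)^{(n+1)/2}$. Using \eqref{E:F-hat} and moving the multiplier onto $\widehat{\varphi}$, Parseval's identity gives
\[
0 = \inprod{h}{\varphi}_{\mathcal{H}} = \frac{1}{n!\,\omega_n}\int_{\R^n} h(x)\,\bigl[(I-\Delta)^{(n+1)/2}\varphi\bigr](x)\ dx .
\]
As this holds for all $\varphi \in C_c^\infty(\R^n \setminus A)$, and the operator is local, it is precisely the assertion that $(I-\Delta)^{(n+1)/2} h = 0$ in the sense of distributions on the open set $\R^n \setminus A$.

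Finally I would invoke elliptic regularity. The operator $(I-\Delta)^{(n+1)/2}$ has constant coefficients and principal symbol $(2\pi)^{n+1}\norm{\xi}_2^{\,n+1}$, which is nonvanishing for $\xi \neq 0$; it is therefore elliptic, indeed hypoelliptic. A distributional solution of the homogeneous equation $(I-\Delta)^{(n+1)/2} h = 0$ on an open set is consequently $C^\infty$ (in fact real-analytic) there. This yields smoothness of $h$ on $\R^n \setminus A$ and lets us read \eqref{E:E-L} as a classical pointwise identity, completing the argument.

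The genuinely delicate point is the middle step, not the two flanking ones: passing from the Hilbert-space orthogonality $\inprod{h}{\varphi}_{\mathcal{H}} = 0$ to the distributional PDE requires throwing the Fourier multiplier entirely onto the smooth test function $\varphi$ rather than onto $h$, since $h$ is not known to be smooth a priori. Thus one must use the polarized, bilinear form of \eqref{E:H-norm-deriv} with all derivatives falling on $\varphi$, and justify the Parseval manipulation using only that $\widehat{h} \in L_2$ and that $(I-\Delta)^{(n+1)/2}\varphi \in \mathcal{S}(\R^n)$. Once the weak equation is correctly set up, the regularity conclusion is a standard citation and both the $C^\infty$ smoothness and the classical form of \eqref{E:E-L} follow at once.
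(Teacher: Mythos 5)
Your argument is correct and takes exactly the route the paper indicates for this result (which it cites from \cite{MeckMDC} rather than proving in full): extract the orthogonality/Euler--Lagrange condition from the variational characterization in Theorem~\ref{T:inf}, convert it via \eqref{E:F-hat}, Parseval, and the locality of $(I-\Delta)^{(n+1)/2}$ for odd $n$ into a weak equation on $\R^n \setminus A$ with all derivatives on the test function, and conclude by (hypo)elliptic regularity. You have also correctly identified the one delicate point --- that the multiplier must fall entirely on $\varphi$ since $h$ is only known to lie in $\mathcal{H}$ --- so nothing further is needed.
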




To indicate the usefulness of this observation, we show how Theorem
\ref{T:pde} can be used to quickly compute the magnitude of an
interval in $[a, b] \subseteq \R$.  By Theorem \ref{T:pde}, the
potential function $h$ satisfies $h - h'' = 0$ outside $[a,b]$.
The boundary conditions $h(x) = 1$ for $a\le x \le b$ and
$h(x) \to 0$ when $\abs{x} \to \infty$ (since $h \in H^1(\R^n)$) imply
that
\[
h(x) = \begin{cases} e^{x-a} & \text{ if } x < a, \\
  1 & \text{ if } a \le x \le b, \\
  e^{b-x} & \text{ if } x > b. \end{cases}
\]
Then by Theorem \ref{T:magnitude-integral},
\[
\mg{[a,b]} = \frac{1}{2} \int_\R h(x) \ dx = 1 + \frac{b-a}{2},
\]
in agreement with \eqref{E:interval-magnitude}.  A more involved, but
still elementary computation yields another proof of Corollary
\ref{T:R-mag-gaps}.

For higher dimensions, Barcel\'o and Carbery \cite{BaCa} analyzed the
minimization problem in more depth, and proved the following result
using standard techniques of the theory of partial differential
equations.

\begin{prop}[See Proposition 2 and Lemma 4 of \cite{BaCa}]
  \label{T:pde-uniqueness}
  Suppose that $n$ and $m$ are positive integers, and
  $A \subseteq \R^n$ is a convex body.
  \begin{enumerate}
  \item There is a unique function $f \in H^m(\R^n)$ such that
    \[
    (I - \Delta)^m f(x) = 0 \text{ on } \R^n \setminus A
    \]
    weakly and $f \equiv 1$ on $A$.
  \item If $\partial A$ is piecewise $C^1$ and $f \in H^m(\R^n)$, then
    all derivatives of $f$ up to order $m-1$ vanish on $\partial A$
    (in the sense of traces of Sobolev functions).
  \end{enumerate}
\end{prop}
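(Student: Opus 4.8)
The plan is to treat both parts by elementary Hilbert-space and trace theory, viewing $H^m(\R^n)$ as a Hilbert space under the inner product $\inprod{f}{g}_{H^m} = \int_{\R^n} (1 + 4\pi^2 \norm{\xi}_2^2)^m \widehat{f}(\xi)\overline{\widehat{g}(\xi)}\, d\xi$, for which $(I-\Delta)^m$ is exactly the associated operator. With this convention the weak equation $(I-\Delta)^m f = 0$ on $\R^n \setminus A$ says precisely that $\inprod{f}{\varphi}_{H^m} = 0$ for every $\varphi \in C_c^\infty(\R^n \setminus A)$.

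For part~(1), I would first set $\mathcal{A} = \Set{f \in H^m(\R^n)}{f = 1 \text{ a.e. on } A}$. This set is nonempty (it contains any smooth bump equal to $1$ on a neighborhood of $A$), convex, and closed in $H^m$ (if $f_k \to f$ in $H^m$ then a subsequence converges a.e., preserving the constraint). Hence the projection theorem produces a unique norm-minimizer $f \in \mathcal{A}$, characterized by orthogonality to the difference space $W = \Set{g \in H^m(\R^n)}{g = 0 \text{ a.e. on } A}$, i.e.\ $\inprod{f}{g}_{H^m} = 0$ for all $g \in W$. Since $C_c^\infty(\R^n \setminus A) \subseteq W$, this minimizer already solves the weak equation together with the constraint, giving existence.

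The heart of the argument --- and the step I expect to be the main obstacle --- is the identity $W = \overline{C_c^\infty(\R^n \setminus A)}^{\,H^m} =: H_0^m(\R^n \setminus A)$, which is exactly what upgrades the variational orthogonality to genuine uniqueness. One inclusion is clear. For the reverse, I would use convexity decisively: translating so that $0$ lies in the interior of $A$, for $g \in W$ set $g_\lambda(x) = g(\lambda x)$ with $0 < \lambda < 1$. Convexity gives $\lambda A \subseteq \operatorname{int} A$, so $g_\lambda$ vanishes a.e.\ on the neighborhood $\lambda^{-1} A \supseteq A$ of $A$, while a Fourier/dominated-convergence computation shows $g_\lambda \to g$ in $H^m$ as $\lambda \to 1^-$. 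Mollifying each $g_\lambda$ and cutting off at large radius then exhibits $g$ as an $H^m$-limit of elements of $C_c^\infty(\R^n \setminus A)$. Granting this density, any solution $\tilde f$ of the weak equation with $\tilde f \equiv 1$ on $A$ satisfies $\inprod{\tilde f}{g}_{H^m} = 0$ for all $g \in W$, hence coincides with the projection $f$; this yields uniqueness.

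For part~(2), I would note that it in fact uses only $f \equiv 1$ on $A$ and $f \in H^m$, not the PDE. Since $f$ is constant on the open set $\operatorname{int} A$, every derivative $\partial^\alpha f$ with $\abs{\alpha} \ge 1$ vanishes there. For $1 \le \abs{\alpha} \le m-1$ we have $\partial^\alpha f \in H^{m - \abs{\alpha}}(\R^n) \subseteq H^1(\R^n)$, so when $\partial A$ is piecewise $C^1$ (hence Lipschitz) the $H^1$ trace operator applies, and the trace of a global $H^1(\R^n)$ function agrees with its interior trace from $A$. As that interior trace is the trace of the zero function, each $\partial^\alpha f$ with $1 \le \abs{\alpha} \le m-1$ has vanishing trace on $\partial A$, which is the assertion (the value $f = 1$ accounts for the remaining, zeroth-order, data).
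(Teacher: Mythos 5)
Your proof is correct. The paper does not actually prove this proposition---it is imported from \cite{BaCa}---but your argument is essentially the one given there: existence via projection onto the closed affine set $\Set{f \in H^m(\R^n)}{f = 1 \text{ a.e.\ on } A}$; uniqueness via the density of $C_c^\infty(\R^n \setminus A)$ in $\Set{g \in H^m(\R^n)}{g = 0 \text{ a.e.\ on } A}$, which is exactly where convexity enters through the dilation $g_\lambda$; and part~(2) from the agreement of interior and exterior $H^1$ traces across the Lipschitz boundary of a convex body, independently of the PDE. You have correctly identified the density lemma as the one genuinely nontrivial step, and your dilate--cut off--mollify sketch of it is the standard (and the cited) argument.
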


Together with Theorems~\ref{T:magnitude-integral} and~\ref{T:pde},
Proposition~\ref{T:pde-uniqueness} reduces the computation of
magnitudes (in many cases) to the solution of a PDE boundary value
problem.  In general, of course, solving a PDE boundary value problem
is no simple matter.  But in the case that $A = \ball{n}_R$ is a
Euclidean ball, rotational symmetry reduces the partial differential
equation to an ordinary differential equation on $[R,\infty)$, albeit
of high degree. Barcel\'o and Carbery gave an algorithm for solving
the resulting ODE boundary value problem, and hence determining the
potential function $h$ of $\ball{n}_R$, for every odd dimension $n$
and radius $R > 0$. From there, Theorem \ref{T:magnitude-integral} can
be used to compute the magnitude $\ball{n}_R$.  (In \cite{BaCa} the
magnitude was found by computing $\norm{h}_{H^{(n+1)/2}}^2$ using
\eqref{E:H-norm-deriv}, since Theorem \ref{T:magnitude-integral} had
not yet been proved; Theorem \ref{T:magnitude-integral} makes the
computation much simpler.)  This approach yields the following.

\begin{thm}[Theorems 2, 3, and 4 of \cite{BaCa}]
  \label{T:l2-balls}
  For every $R > 0$,
  \[
  \mg{\ball{3}_R} = 1 + 2R + R^2 + \frac{1}{6} R^3
  \]
  and
  \[
  \mg{\ball{5}_R} = \frac{24 + 72R^2 + 35 R^3 + 9R^4 +
    R^5}{8(R+3)} + \frac{1}{120} R^5.
  \]
  In general, when $n$ is odd, the magnitude
  $\mg{\ball{n}_R}$ is a rational function of $R > 0$ with
  rational coefficients.
\end{thm}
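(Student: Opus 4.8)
The plan is to compute the magnitude by first determining the potential function $h$ of $\ball{n}_R$ explicitly when $n$ is odd, and then integrating it via Theorem~\ref{T:magnitude-integral}. By Theorem~\ref{T:pde} and Proposition~\ref{T:pde-uniqueness}(1), $h$ is the unique function in $H^{(n+1)/2}(\R^n)$ with $h \equiv 1$ on $\ball{n}_R$ and $(I - \Delta)^{(n+1)/2} h = 0$ on $\R^n \setminus \ball{n}_R$. Since the ball, the operator, and the boundary data are all invariant under the orthogonal group, this uniqueness forces $h$ to be radial: $h(x) = u(\norm{x}_2)$ for a function $u$ on $[0,\infty)$ with $u \equiv 1$ on $[0,R]$.

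Next I would reduce the PDE to an ODE. Writing $m = (n+1)/2$ and using the radial form $\Delta = \partial_r^2 + \tfrac{n-1}{r}\partial_r$, the equation becomes $(I - \Delta)^m u = 0$ on $(R,\infty)$, a linear ODE of order $2m = n+1$. Its solutions decaying at infinity form an $m$-dimensional space; because $n$ is odd, the relevant modified Bessel functions $K_{k+1/2}$ are elementary, so this space is spanned by functions of the form $r^{-j}e^{-r}$ with $j$ ranging over a finite set of integers (for instance $\{e^{-r},\, e^{-r}/r\}$ when $n=3$). The remaining coefficients are fixed by matching at $r = R$: from Proposition~\ref{T:pde-uniqueness}(2) the normal derivatives of $h$ of orders $1$ through $m-1$ vanish on $\partial\ball{n}_R$, while $h\equiv 1$ inside gives $u(R)=1$, so that the pieced-together $h$ is $C^{m-1}$ globally. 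This yields exactly $m$ linear conditions $u(R)=1$, $u^{(k)}(R)=0$ $(1\le k\le m-1)$ for the $m$ unknown coefficients. After factoring out a common $e^{-r}$ the conditions form a linear system with entries rational in $R$, so by Cramer's rule the coefficients are rational functions of $R$ times $e^{R}$.

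Finally I would evaluate $\mg{\ball{n}_R} = \frac{1}{n!\,\omega_n}\int_{\R^n} h\,dx$, which applies since $h$ is integrable (it decays like $e^{-r}$). Splitting the integral at $r=R$ gives
\[
\int_{\R^n} h = \omega_n R^n + n\omega_n \int_R^\infty u(r)\, r^{n-1}\,dr,
\]
and each outer integral $\int_R^\infty r^{\ell} e^{-r}\,dr = e^{-R} q_\ell(R)$ is $e^{-R}$ times a polynomial with rational coefficients. The factor $e^{R}$ in the coefficients of $u$ cancels this $e^{-R}$, so after dividing by $n!\,\omega_n$ every $\omega_n$ cancels and one is left with a rational function of $R$ with rational coefficients, whose leading term is $R^n/n!$ (consistent with Theorem~\ref{T:volume}). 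This establishes part~(3), and carrying out the linear algebra and integration explicitly for $n=3$ and $n=5$ produces the two displayed formulas.

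The main obstacle is the middle step: correctly identifying the $m$-dimensional space of decaying radial solutions and imposing the right $C^{m-1}$ matching conditions, together with the bookkeeping needed to verify that everything remains rational in $R$ once the exponential factors cancel. For the explicit formulas the $n=5$ case in particular requires solving a $3\times 3$ system and simplifying the resulting integral—elementary in principle but where the genuine computational work lies. One could instead invoke the Barcel\'o--Carbery algorithm (Proposition~\ref{T:pde-uniqueness} and the surrounding discussion) to perform this step systematically in each odd dimension.
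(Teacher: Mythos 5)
Your proposal is correct and follows essentially the same route the paper describes for this result: reduce to the radial ODE boundary value problem via Theorem~\ref{T:pde} and Proposition~\ref{T:pde-uniqueness}, solve for the potential function (elementary for odd $n$ since the half-integer Bessel functions degenerate to $e^{-r}$ times powers of $1/r$), and then integrate $h$ using Theorem~\ref{T:magnitude-integral} --- which is precisely the simplification over the original Barcel\'o--Carbery computation of $\norm{h}_{H^{(n+1)/2}}^2$ that the paper itself advocates. The matching conditions $u(R)=1$, $u^{(k)}(R)=0$ for $1\le k\le m-1$ and the rationality bookkeeping are exactly right (e.g.\ for $n=3$ one finds $u(r)=(R+1)e^{R-r}-R^2e^{R-r}/r$, and the integral reproduces $1+2R+R^2+\tfrac16R^3$).
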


Barcel\'o and Carbery also give an explicit formula for
$\mg{\ball{7}_R}$.  We recall that they also determined the
asymptotics of $\mg{\ball{n}_R}$ when $R \to 0$ and $R \to \infty$ in
\cite[Theorem 1]{BaCa}, stated above in Theorems \ref{T:shrink} and
\ref{T:volume}.  To date, odd-dimensional balls are the only convex
bodies in Euclidean space whose exact magnitudes are known.

It was previously conjectured in \cite{LeWi} that for a compact convex
set $A \subseteq \ell_2^n$,
\begin{equation}
  \label{E:convex-conj}
  \mg{A} = \sum_{i=0}^n \frac{V_i(A)}{i! \omega_i},
\end{equation}
where $V_0, \dots, V_n$ denotes the classical intrinsic volumes.
Theorem \ref{T:l2-balls} implies that \eqref{E:convex-conj} holds for
balls in $\ell_2^3$, but is false in dimensions $n \ge 5$.

To put this conjecture in context, observe that \eqref{E:convex-conj}
is a Euclidean version of Conjecture~\ref{C:ell1-convex} in
$\ell_1^n$.  Note that $2^i = i! \omega_i'$ , where $\omega_i'$
denotes the volume of the unit ball in $\ell_1^i$, tightening the
analogy between \eqref{E:convex-conj} and
Conjecture~\ref{C:ell1-convex}. At the time that \eqref{E:convex-conj}
was proposed, it was known to hold for $n=1$, and was supported by
numerical computations in $n=2$
\cite{Willerton-heuristic}. Furthermore, some cases of Conjecture
\ref{C:ell1-convex} in $\ell_1^n$ (contained in Theorem
\ref{thm:ell1-cvx}) were known to be true.

Several interesting questions remain open, most obviously whether
\eqref{E:convex-conj} holds for $n \le 4$.  Noting that
\eqref{E:convex-conj} is equivalent to
\begin{equation}
  \label{E:convex-conj-mf}
    \mg{tA} = \sum_{i=0}^n \frac{V_i(A)}{i! \omega_i} t^i,
\end{equation}
Proposition \ref{T:p-norm-finite-mag} and Theorem \ref{T:volume} say
that \eqref{E:convex-conj-mf} is true to top order for sets of
positive volume when $t \to \infty$, and Theorem \ref{T:shrink} shows
that \eqref{E:convex-conj} predicts the correct behavior when $t \to
0$.  One could ask whether \eqref{E:convex-conj-mf} is approximately
true in some sharper asymptotic senses.  Note that Theorem
\ref{T:homog-manifold} is a Riemannian analogue of a weak asymptotic
version of \eqref{E:convex-conj-mf}.  Barcel\'o and Carbery also raise
the question of whether \eqref{E:convex-conj} holds if magnitude is
replaced by a suitable modification which coincides with magnitude in
$\ell_2^n$ for $n \le 3$.  We mention another related question in
section \ref{S:open-problems}.

The final major consequence of the concrete identification of
$\mathcal{H}$ for Euclidean space is the realization that magnitude
and maximum diversity, in the setting of $\ell_2^n$, are actually
classical notions of capacity, well-known in potential theory.  The
formal similarity between magnitude and maximum diversity on the one
hand, and capacity on the other, is clear from the definitions (cf.\
section 1.1 of \cite{BaCa}).  But in $\ell_2^n$, magnitude and maximum
diversity almost precisely reproduce classically studied forms of
capacity.

Specifically, \eqref{E:F-hat} and \cite[Theorem 2.2.7]{AdHe} imply
that for a compact set $A \subseteq \ell_2^n$,
$\md{A} = \frac{1}{n! \omega_n} C_{(n+1)/2}(A)$, where
\[
C_\alpha(A) := \inf \Set{\norm{f}_{H^{\alpha}}^2}{f
    \in \mathcal{S}(\R^n),\ f \ge 1 \text{ on } A},
\]
is the \dfn{Bessel capacity} of $A$ of order $\alpha$. An alternative
notion of capacity, which naturally arises in the study of
removability of singularities (see \cite[Section 2.7]{AdHe}), is
\[
N_\alpha(A) := \inf \Set{\norm{f}_{H^{\alpha}}^2}{f \in
    \mathcal{S}(\R^n),\ f \ge 1 \text{ on a neighborhood of } A}.
\]
By Theorem \ref{T:inf},
$\mg{A} \le \frac{1}{n! \omega_n} N_{(n+1)/2}(A)$.  In fact one would
expect $\mg{A} = \frac{1}{n! \omega_n} N_{(n+1)/2}(A)$; this appears
not to be the case for arbitrary compact $A$, but happily the
comparison we have will be enough for our purpose.

Before moving on, we pause to observe that, although we have just seen
that magnitude and its cousin maximum diversity fit into classical
families of capacities, they both just fail to fit into the parameter
range which is of relevance for classical applications.  As alluded to
above, capacities are frequently used to quantify ``exceptional''
sets; sets of capacity $0$ are a frequent substitute for sets of
measure $0$ when studying singularities.  However, $C_\alpha(A)$ and
$N_\alpha(A)$ are bounded below by positive constants whenever
$\alpha > n/2$. So from the point of view of classical potential
theory, magnitude and maximum diversity are rather pathological.
Nevertheless, the following result from potential theory, whose main
classical application is to show that $C_\alpha(A) = 0$ if and only if
$N_\alpha(A) = 0$, also applies in our setting.

\begin{prop}[{\cite[Theorem 3.3.4]{AdHe}}]
  \label{T:cap-equivalence}
  For each $n$ and each $\alpha > 0$ there is a constant
  $\kappa_{n, \alpha} \ge 1$ such that, for every compact set
  $A \subseteq \ell_2^n$,
  \[
  C_{\alpha}(A) \le N_{\alpha}(A) \le \kappa_{n,\alpha} C_{\alpha}(A).
  \]
\end{prop}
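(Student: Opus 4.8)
The left-hand inequality costs nothing: any $f \in \mathcal{S}(\R^n)$ that is at least $1$ on a neighborhood of $A$ is in particular at least $1$ on $A$, so the functions competing in the infimum for $N_\alpha(A)$ form a subcollection of those competing for $C_\alpha(A)$. An infimum over a smaller admissible set is no smaller, so $C_\alpha(A) \le N_\alpha(A)$.

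For the right-hand inequality the plan is to turn a near-optimal competitor for $C_\alpha(A)$ into an admissible competitor for $N_\alpha(A)$ while controlling its $H^\alpha$-norm, and the clean mechanism is a rescaling. Fix $f \in \mathcal{S}(\R^n)$ with $f \ge 1$ on $A$, and let $c > 1$. Because $f$ is continuous, the set $U_c := \Set{x \in \R^n}{f(x) > 1/c}$ is open, and since $f \ge 1 > 1/c$ on the compact set $A$ we have $A \subseteq U_c$; thus $U_c$ is an open neighborhood of $A$ on which $cf > 1$. Hence $cf \in \mathcal{S}(\R^n)$ is admissible for $N_\alpha(A)$, giving $N_\alpha(A) \le \norm{cf}_{H^\alpha}^2 = c^2 \norm{f}_{H^\alpha}^2$. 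Letting $c \downarrow 1$ and then taking the infimum over all admissible $f$ yields $N_\alpha(A) \le C_\alpha(A)$, so the right-hand inequality holds, in fact with $\kappa_{n,\alpha} = 1$.

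I expect the only subtlety to lie not in the proposition as stated but in reconciling it with the generality of \cite{AdHe}, which is what forces a genuine constant $\kappa_{n,\alpha} \ge 1$ there. In that reference the infima run over a full Bessel-potential space and the constraint ``$f \ge 1$ on $A$'' is imposed only quasi-everywhere. For a merely quasi-continuous $f$ the superlevel set $\{f > 1/c\}$ need not contain an honest open neighborhood of $A$, and the rescaling above collapses. The remedy, and the main obstacle in that setting, is to regularize $f$ by a truncation followed by convolution with a mollifier, producing a function that genuinely exceeds $1$ on an open neighborhood of $A$; the constant $\kappa_{n,\alpha}$ then emerges as the bound certifying that this smoothing is a bounded operation on $H^\alpha(\R^n)$ with norm depending only on $n$ and $\alpha$. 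With the Schwartz formulation adopted here, however, the elementary rescaling already suffices, and no such smoothing estimate is needed.
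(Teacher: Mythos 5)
The paper does not prove this proposition at all: it is quoted from Adams and Hedberg, where it is a genuinely nontrivial theorem (proved via the boundedness principle for the capacitary extremal potential), and the constant $\kappa_{n,\alpha}$ is genuinely larger than $1$ in general. Your rescaling argument is internally valid for the two formulas as literally displayed in the paper, but that should set off an alarm: you have proved $N_\alpha(A) = C_\alpha(A)$, and feeding this back into the surrounding text would give
$\md{A} = \frac{1}{n!\omega_n}C_{(n+1)/2}(A) = \frac{1}{n!\omega_n}N_{(n+1)/2}(A) \ge \mg{A} \ge \md{A}$,
i.e.\ Corollary \ref{T:mag-md-equivalence} with $\kappa_n = 1$ and hence $\md{A}=\mg{A}$ for every compact $A \subseteq \ell_2^n$. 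That is far stronger than anything the paper or the literature asserts, and it is not a known (or believed) fact.

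The resolution is that the displayed formula for $C_\alpha$ is a loose paraphrase. The Bessel capacity actually being invoked --- the one for which \cite[Theorem 2.2.7]{AdHe} supplies the dual formula as a supremum over \emph{positive measures}, which is exactly what identifies $C_{(n+1)/2}(A)$ with $n!\,\omega_n \md{A}$ --- is
\[
C_{\alpha}(A) = \inf\Set{\norm{g}_{L_2}^2}{g \ge 0,\ G_\alpha * g \ge 1 \text{ on } A},
\]
where $G_\alpha$ is the Bessel kernel. A Schwartz function $f$ with $f \ge 1$ on $A$ equals $G_\alpha * g$ for a \emph{signed} $g$ and so is not an admissible competitor for this infimum; conversely an admissible $G_\alpha * g$ need not be Schwartz. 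The two infima therefore run over incomparable classes rather than nested ones, and the rescaling trick cannot convert a near-minimizer for $C_\alpha$ into a competitor for $N_\alpha$. Your closing paragraph correctly senses that the generality of \cite{AdHe} is where the constant comes from, but it locates the obstruction in quasi-continuity rather than in the positivity constraint on the representing function, and the conclusion that $\kappa_{n,\alpha}=1$ suffices ``with the Schwartz formulation adopted here'' is precisely the step that does not survive once the definition is read as intended. The inequality $N_\alpha(A) \le \kappa_{n,\alpha} C_\alpha(A)$ genuinely requires the potential-theoretic machinery behind \cite[Theorem 3.3.4]{AdHe}.
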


\begin{cor}[{\cite[Corollary 6.2]{MeckMDC}}]
  \label{T:mag-md-equivalence}
  For each $n$ there is a constant $\kappa_n \ge 1$ such that, for
  every compact set $A \subseteq \ell_2^n$,
  \[
  \md{A} \le \mg{A} \le \kappa_n \md{A}.
  \]
\end{cor}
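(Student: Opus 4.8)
The plan is to assemble the corollary from the identifications of magnitude and maximum diversity with capacities developed in the discussion immediately preceding the statement, together with the classical comparison in Proposition~\ref{T:cap-equivalence}. Recall that we have already recorded that $\md{A} = \frac{1}{n!\omega_n} C_{(n+1)/2}(A)$, that $\mg{A} \le \frac{1}{n!\omega_n} N_{(n+1)/2}(A)$ (the latter by Theorem~\ref{T:inf}), and that the two capacities $C_\alpha$ and $N_\alpha$ are comparable up to a dimensional constant. So the proof should be a short matter of chaining these facts and reading off the constant.

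For the lower bound $\md{A} \le \mg{A}$, I would not use the capacity picture at all, since it only gives $\mg{A} \le \frac{1}{n!\omega_n} N_{(n+1)/2}(A)$ rather than an equality; instead I would simply invoke Proposition~\ref{T:compact-sup}, which yields $\md{A} \le \mg{A}$ for \emph{every} compact positive definite space. As every compact subset of $\ell_2^n$ is positive definite (Theorem~\ref{T:pd-examples} together with Proposition~\ref{T:finite-pd-operations}), this applies here directly.

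For the upper bound, I would specialize Proposition~\ref{T:cap-equivalence} to $\alpha = (n+1)/2$ and chain the inequalities, writing
\[
\mg{A} \le \frac{1}{n!\omega_n} N_{(n+1)/2}(A) \le \frac{\kappa_{n,(n+1)/2}}{n!\omega_n} C_{(n+1)/2}(A) = \kappa_{n,(n+1)/2}\, \md{A}.
\]
Setting $\kappa_n := \kappa_{n,(n+1)/2}$ (which is $\ge 1$ since $\kappa_{n,\alpha} \ge 1$) then gives exactly $\mg{A} \le \kappa_n \md{A}$, completing the argument.

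There is essentially no obstacle left at this level: all the genuine content has been pushed into the earlier results, namely the RKHS identification \eqref{E:F-hat} with the Sobolev space $H^{(n+1)/2}$, the consequent matching of $\md{A}$ and $\mg{A}$ with the Bessel capacity $C_{(n+1)/2}$ and the singularity-removability capacity $N_{(n+1)/2}$, and the potential-theoretic comparison of Proposition~\ref{T:cap-equivalence}. The only point requiring any care is the bookkeeping of the normalizing factor $\frac{1}{n!\omega_n}$, which must be carried identically through both capacity expressions so that it cancels and the final constant $\kappa_n$ is purely the capacity-comparison constant; once that is checked the corollary follows immediately.
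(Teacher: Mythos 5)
Your proposal is correct and follows exactly the route the paper intends: the lower bound $\md{A} \le \mg{A}$ from Proposition~\ref{T:compact-sup} (restricting the supremum to positive measures), and the upper bound by chaining $\mg{A} \le \frac{1}{n!\omega_n}N_{(n+1)/2}(A) \le \frac{\kappa_{n,(n+1)/2}}{n!\omega_n}C_{(n+1)/2}(A) = \kappa_{n,(n+1)/2}\,\md{A}$ using the capacity identifications recorded just before the statement and Proposition~\ref{T:cap-equivalence}. Nothing is missing.
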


The significance of Corollary \ref{T:mag-md-equivalence} is that,
although maximum diversity is no easier to compute explicitly than
magnitude, in some ways its rough behavior is easier to analyze.  For
example, it is natural to conjecture that the magnitude function
$t \mapsto \mg{tA}$ is increasing for a compact space $A$ of negative
type.  It is unknown whether this is true. On the other hand, it is
obvious that $t \mapsto \mg{tA}_+$ is increasing, and Corollary
\ref{T:mag-md-equivalence} therefore implies that the magnitude
function of a compact set $A \subseteq \ell_2^n$ is at least bounded
above and below by constant multiples of an increasing function.

A more substantial consequence of Corollary \ref{T:mag-md-equivalence}
is the following result, which, like Theorem \ref{T:volume}, shows
that the category-theoretically inspired notion of magnitude turns out
to encode quantities of fundamental importance in geometry.

\begin{thm}[{\cite[Corollary 7.4]{MeckMDC}}]
  \label{T:mdim}
  If $A \subseteq \ell_2^n$ is compact, then
  \[
  \lim_{t \to \infty} \frac{\log \mg{tA}}{\log t} = \boxdim A.
  \]
\end{thm}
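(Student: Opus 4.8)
The plan is to combine the two ingredients already assembled: the asymptotic formula for maximum diversity in Proposition~\ref{T:ddim}, which holds for arbitrary compact metric spaces, and the two-sided comparison between magnitude and maximum diversity that is special to Euclidean space, namely Corollary~\ref{T:mag-md-equivalence}. The Euclidean hypothesis enters the proof \emph{only} through the latter: in a general space of negative type one has merely $\md{A} \le \mg{A}$, which would yield just one of the two inequalities we need.

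First I would note that for each $t > 0$ the metric space $tA$ is isometric to the rescaled set $t \cdot A = \Set{ta}{a \in A}$, which is again a compact subset of $\ell_2^n$. Hence Corollary~\ref{T:mag-md-equivalence} applies, with a single dimensional constant $\kappa_n \ge 1$ that is independent of $t$, to give
\[
\md{tA} \le \mg{tA} \le \kappa_n \md{tA}
\qquad \text{for every } t > 0.
\]
Since $\md{tA} = \mg{tA}_+ \ge 1$, every quantity here is positive, so I may take logarithms and divide by $\log t$ (for $t > 1$, where $\log t > 0$) to obtain
\[
\frac{\log \mg{tA}_+}{\log t} \le \frac{\log \mg{tA}}{\log t} \le \frac{\log \kappa_n}{\log t} + \frac{\log \mg{tA}_+}{\log t}.
\]

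As $t \to \infty$ the term $\log \kappa_n / \log t$ tends to $0$, so the outer two expressions share the same limiting behavior. A squeeze then forces $\frac{\log \mg{tA}}{\log t}$ and $\frac{\log \mg{tA}_+}{\log t}$ to have the same limit; and, since $\kappa_n$ does not depend on $t$, the identical bound applied to the liminf and limsup shows that even when the limit fails to exist, the upper and lower versions of the two quantities agree (so the statement holds in the generalized sense indicated after Proposition~\ref{T:ddim}). Proposition~\ref{T:ddim} identifies this common limit as $\boxdim A$, which completes the argument.

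There is essentially no obstacle to overcome at this stage: the entire substance of the theorem is packaged inside the potential-theoretic comparison of Corollary~\ref{T:mag-md-equivalence}, whose own proof rests on Proposition~\ref{T:cap-equivalence}. What remains here---the passage from a multiplicative two-sided bound to an equality of dimension-like logarithmic limits---is a routine squeeze. The one point genuinely worth flagging is that the comparison constant $\kappa_n$ is uniform in $t$, which is exactly what allows its contribution to be washed out after dividing by $\log t$; were the constant allowed to grow with the scale, the argument would break down.
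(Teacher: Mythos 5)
Your proposal is correct and follows exactly the route the paper takes: the paper derives Theorem~\ref{T:mdim} immediately from Proposition~\ref{T:ddim} together with Corollary~\ref{T:mag-md-equivalence}, which is precisely the squeeze you carry out. Your write-up simply makes explicit the logarithmic bookkeeping and the (correct) observation that the uniformity of $\kappa_n$ in $t$ is what makes the comparison constant wash out.
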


Theorem \ref{T:mdim}, which should be interpreted in the same sense as
Proposition \ref{T:ddim}, follows immediately from Proposition
\ref{T:ddim} and Corollary \ref{T:mag-md-equivalence}.  Another
interesting aspect of this result is that, as noted above, classically
Proposition \ref{T:cap-equivalence} is of interest primarily for sets
of capacity $0$, or more generally for small sets; here it is instead
applied to large sets.

\section{Open problems}
\label{S:open-problems}

There are many interesting open problems about magnitude.  These
include extending partial results discussed above, as well as some
quite basic questions about the behavior of magnitude.  We mention
several of them below.

\begin{enumerate}
\item Does every compact positive definite space (or space of negative
  type) have finite magnitude?

  Proposition \ref{T:p-norm-finite-mag} implies that every compact
  subset of a finite dimensional positive definite normed (or
  $p$-normed) space has finite magnitude, so that the obvious place to
  look for a counterexample is in infinite dimensions. Essentially the
  only infinite-dimensional spaces whose magnitudes are known are
  boxes in $\ell_1$, which just miss being a counterexample:
  \[
  \mg{\prod_{i=1}^\infty [0,r_i]} = \prod_{i=1}^\infty \left(1 +
    \frac{r_i}{2}\right).
  \]
  The condition $\norm{r}_1 < \infty$, which both guarantees that this
  infinite-dimensional box lies in $\ell_1$ and is compact, is also
  equivalent to the finiteness of the product on the right-hand side.

\item Is magnitude continuous on the class of compact sets in a
  positive definite normed (or $p$-normed) space? What if we assume
  the space is finite-dimensional, or we restrict to geodesic sets, or
  convex sets?

  Recall that magnitude is continuous on convex bodies in a
  finite-dimensional positive definite $p$-normed space (Theorem
  \ref{T:star-continuity}), but is not continuous on the class of
  compact spaces of negative type (Examples 2.2.8 and 2.4.9 of
  \cite{MMS}).

\item Is Conjecture \ref{C:ell1-convex} true? Is it at least true for
  compact convex sets $A \subseteq \ell_1^n$?

  In light of Theorem \ref{thm:ell1-cvx}, Conjecture
  \ref{C:ell1-convex} is equivalent to the continuity of magnitude on
  compact, geodesic (i.e., $\ell_1$-convex) sets in
  $\ell_1^n$. Similarly, if magnitude is continuous on compact convex
  sets in $\ell_1^n$, then Theorem \ref{thm:ell1-cvx} would imply
  that Conjecture \ref{C:ell1-convex} holds for compact convex sets.

\item Does the magnitude function of a convex body $A \subseteq
  \ell_2^n$ determine its intrinsic volumes?  What about a homogeneous
  compact Riemannian manifold?

\item Does it hold that
  \[
  \mg{t(A \cup B)} + \mg{t(A \cap B)} - \mg{tA} - \mg{tB} \to 0
  \]
  as $t \to \infty$ for compact, convex sets $A,B \subseteq \ell_2^n$
  (or in more general normed spaces) such that $A \cup B$ is convex?

  For convex bodies in $\ell_1^n$, the left-hand side of the above is
  $0$ for every $t$, as a consequence of Theorem \ref{thm:ell1-cvx};
  the same would be true in $\ell_2^n$ if \eqref{E:convex-conj} were
  true.

\item Does Theorem \ref{T:mdim} hold for arbitrary compact spaces of
  negative type?

  Theorem \ref{T:mdim} applies to compact subsets of $\ell_2^n$.  As
  mentioned earlier, Proposition 7.5 of \cite{MeckMDC} shows that the
  conclusion of Theorem \ref{T:mdim} also holds for compact
  homogeneous metric spaces.  In addition, Theorem \ref{T:volume}
  implies that the conclusion of Theorem \ref{T:mdim} holds for
  compact subsets of positive $n$-dimensional volume in an
  $n$-dimensional positive definite $p$-normed space; hence it holds,
  for example, for compact convex sets in any positive definite
  $p$-normed space.
  
\end{enumerate}

\bibliographystyle{plain}
\bibliography{magnitude-survey}

\end{document}